\theoremstyle{plain}
\newtheorem{theorem}{Theorem}[section]
\newtheorem{corollary}[theorem]{Corollary}
\newtheorem{proposition}[theorem]{Proposition}
\newtheorem{conjecture}[theorem]{Conjecture}
\newtheorem{lemma}[theorem]{Lemma}
\newtheorem{rem}[theorem]{Remark}
\newtheorem{ques}[theorem]{Question}
\numberwithin{equation}{section}
\def\op{\operatorname}
\newcommand{\Z}{\mathbb{Z}}
\newcommand{\Q}{\mathbb{Q}}
\begin{document}

\title[Arithmetic properties of polynomials]{Arithmetic properties of
polynomial solutions of the Diophantine equation $P(x)x^{n+1}+Q(x)(x+1)^{n+1}=1$}

\author{Karl Dilcher}
\address{Department of Mathematics and Statistics\\
         Dalhousie University\\
         Halifax, Nova Scotia, B3H 4R2, Canada}
\email{dilcher@mathstat.dal.ca}

\author{Maciej Ulas}
\address{Institute of Mathematics of the Polish Academy of Sciences, \'{S}wi\c{e}tego Tomasza 30, 31-014 Krak\'{o}w }
\address{and}
\address{Jagiellonian University, Faculty of Mathematics and Computer Science,
Institute of Mathematics, \L{}ojasiewicza 6, 30-348 Krak\'ow, Poland}
\email{Maciej.Ulas@im.uj.edu.pl}

\keywords{recurrence sequence, polynomial Diophantine equation, discriminant, resultant, generating function}
\subjclass[2010]{Primary  12E10, 12E05; Secondary 11D04}
\thanks{Research supported in part by the Natural Sciences and Engineering
        Research Council of Canada, Grant \# 145628481}

\date{}

\setcounter{equation}{0}

\begin{abstract}
For each integer $n\geq 1$ we consider the unique polynomials $P, Q\in\Q[x]$
of smallest degree $n$ that are solutions of the equation
$P(x)x^{n+1}+Q(x)(x+1)^{n+1}=1$. We derive numerous properties of these
polynomials and their derivatives, including explicit expansions, differential
equations, recurrence relations, generating functions, resultants,
discriminants, and irreducibility results. We also consider some related
polynomials and their properties.
\end{abstract}

\maketitle

\section{Introduction}\label{sec1}

It is a well-known fact that the Chebyshev polynomials of the first and second
kind, $T_n(x)$ and $U_n(x)$, can be defined as solutions of the polynomial
Pell equation
\begin{equation}\label{1.1}
T_n(x)^2 - (x^2-1)U_{n-1}(x)^2 = 1
\end{equation}
in the ring $\Z[x]$; see \cite{DP}, or \cite{De} for more general
polynomial rings.

In this paper we consider the following variant of equation \eqref{1.1}. Since
$\Q[x]$ is a Euclidean domain, we know that for given coprime
polynomials $f,g\in\Q[x]$ there are polynomials $P, Q\in\Q[x]$
with $P(x)f(x)+Q(x)g(x)=1$. To make this more specific, we choose $f$ and $g$
to be the simplest pair of coprime polynomials of the same degree, namely
$x^{n+1}$ and $(x+1)^{n+1}$, where $n\geq 0$ is an integer. In other words, we
consider the equation
\begin{equation}\label{1.2}
P(x)x^{n+1}+Q(x)(x+1)^{n+1}=1.
\end{equation}
If we make the assumption that $\deg P\leq n$, $\deg Q\leq n$, then we have
a unique solution $P(x)=P_{n}(x), Q(x)=Q_{n}(x)$ of \eqref{1.2}, and we have
\begin{equation}\label{1.3}
P_{n}(x)x^{n+1}+Q_{n}(x)(x+1)^{n+1}=1
\end{equation}
for integers $n\geq 0$.

It is the purpose of this paper to study properties of the polynomial
sequences $P_n(x)$, $Q_n(x)$. We will see that these polynomials have integer
coefficients, are similar to each other, and in spite of some fundamental
differences they show some similarities with the Chebyshev polynomials in
equation \eqref{1.1}. Moreover, these polynomials appeared in an interesting
context of constructing consecutive integers divisible by high powers of their
largest prime factors \cite[Theorem 4]{KonMoi}.

The first polynomials $P_n(x)$, $Q_n(x)$, for $0\leq n\leq 4$, are shown in Table~1.

\bigskip
\begin{center}
\begin{tabular}{|c|r|r|}
\hline
$n$ & $P_n(x)$ & $Q_n(x)$ \\
\hline
0 & $-1$ & 1 \\
1 & $2x+3$ & $-2x+1$ \\
2 & $-6x^2-15x-10$ & $6x^2-3x+1$\\
3 & $20x^3+70x^2+84x+35$ & $-20x^3+10x^2-4x+1$ \\
4 & $-70x^4-315x^3-540x^2-420x-126$ & $70x^4-35x^3+15x^2-5x+1$  \\
\hline
\end{tabular}

\medskip
{\bf Table~1}: $P_n(x)$ and $Q_n(x)$ for $0\leq n\leq 4$.
\end{center}
\bigskip

This paper is structured as follows. We begin in Section~\ref{sec2} by deriving
some basic properties of the polynomials $P_n(x)$ and $Q_n(x)$, including
differential equations, recurrence relations, and generating functions.
In Sections~3 and~4 we consider variants of the original questions concerning
the identities \eqref{1.2} and \eqref{1.3}; this will involve the Chebyshev
polynomials already mentioned in connection with \eqref{1.1}. In Sections~5
and~6 we study resultants and discriminants involving the polynomials $Q_n(x)$
and their derivatives. We then introduce,
in Section~7, a sequence of polynomials related to the sequence $Q_n(x)$ and
study their properties. Section~8 is devoted to some irreducibility results,
and we conclude this paper with a few further remarks and conjectures in
Section~9.

\section{Basic Properties}\label{sec2}

Throughout the remainder of this paper, $P_n(x)$ and $Q_n(x)$ will denote the
solutions of the equation \eqref{1.3}.

\begin{proposition}\label{prop:2.1}
For any integer $n\geq 0$ we have $\deg P_n=\deg Q_n$, and
\begin{equation}\label{2.1}
P_{n}(x)=(-1)^{n+1}Q_{n}(-1-x), \qquad  Q_{n}(x)=(-1)^{n+1}P_{n}(-1-x).
\end{equation}
\end{proposition}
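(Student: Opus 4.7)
The plan is to exploit the symmetry of the defining equation \eqref{1.3} under the involution $x \mapsto -1-x$, which interchanges the two coprime factors $x$ and $x+1$ up to signs. Since $(-1-x)^{n+1} = (-1)^{n+1}(x+1)^{n+1}$ and $(-x)^{n+1} = (-1)^{n+1}x^{n+1}$, replacing $x$ by $-1-x$ in \eqref{1.3} and absorbing the factor $(-1)^{n+1}$ into $P_n(-1-x)$ and $Q_n(-1-x)$ yields
\begin{equation*}
\bigl[(-1)^{n+1} Q_n(-1-x)\bigr]\, x^{n+1} + \bigl[(-1)^{n+1} P_n(-1-x)\bigr]\, (x+1)^{n+1} = 1.
\end{equation*}

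Next, I would set $\widetilde P_n(x) := (-1)^{n+1} Q_n(-1-x)$ and $\widetilde Q_n(x) := (-1)^{n+1} P_n(-1-x)$. Since the substitution $x \mapsto -1-x$ preserves polynomial degree, we have $\deg \widetilde P_n = \deg Q_n \leq n$ and $\deg \widetilde Q_n = \deg P_n \leq n$, so the pair $(\widetilde P_n, \widetilde Q_n)$ satisfies \eqref{1.3} subject to the very degree bounds that characterize $(P_n, Q_n)$. Invoking the uniqueness of that solution (noted in the introduction), we conclude $\widetilde P_n = P_n$ and $\widetilde Q_n = Q_n$, which is precisely \eqref{2.1}. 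The equality $\deg P_n = \deg Q_n$ is then immediate, since $Q_n(-1-x)$ has the same degree as $Q_n$.

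There is essentially no obstacle here: once one spots the degree-preserving involution $x\mapsto -1-x$ that interchanges $x^{n+1}$ and $(x+1)^{n+1}$ up to the overall sign $(-1)^{n+1}$, the argument reduces to a single substitution combined with uniqueness. If one wishes to keep the proof self-contained rather than appeal to the uniqueness statement in Section~\ref{sec1}, it follows in one line: any two solutions $(P,Q)$ and $(\widetilde P,\widetilde Q)$ with degrees at most $n$ satisfy $(P - \widetilde P)x^{n+1} = -(Q - \widetilde Q)(x+1)^{n+1}$, and the coprimality of $x^{n+1}$ and $(x+1)^{n+1}$ in $\Q[x]$ forces $(x+1)^{n+1} \mid (P - \widetilde P)$, which is impossible for a nonzero polynomial of degree at most $n$.
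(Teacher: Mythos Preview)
Your proof is correct and follows essentially the same route as the paper: substitute $x\mapsto -1-x$ in \eqref{1.3}, then invoke the uniqueness of the degree-bounded solution to obtain \eqref{2.1}. The only cosmetic difference is that the paper infers $\deg P_n=\deg Q_n$ directly from \eqref{1.3} (leading terms must cancel) before proving \eqref{2.1}, whereas you deduce it afterward from \eqref{2.1}; both orderings are fine.
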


\begin{proof}
The definition \eqref{1.3} implies that the degrees of $P_n$ and $Q_n$ must be
the same. Now replace $x$ by $-1-x$ in \eqref{1.3}; then we get
\[
P_{n}(-1-x)(-1)^{n+1}(x+1)^{n+1}+Q_{n}(-1-x)(-1)^{n+1}x^{n+1}=1.
\]
Now the uniqueness of the solutions of \eqref{1.3} implies the two identities
in \eqref{2.1}.
\end{proof}

As a consequence of \eqref{1.3} and \eqref{2.1} we immediately get a few
special values.

\begin{corollary}\label{cor:2.2}
For any integer $n\geq 0$ we have
\begin{equation}\label{2.2}
P_{n}(-1)=(-1)^{n+1},\quad P_{n}(-\tfrac{1}{2})=(-1)^{n+1}2^n,\quad
Q_{n}(-\tfrac{1}{2})=2^n,\quad Q_{n}(0)=1.
\end{equation}
\end{corollary}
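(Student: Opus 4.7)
The plan is to read off each value by evaluating the defining identity \eqref{1.3} at a carefully chosen $x$, combined with the symmetry \eqref{2.1} from Proposition~\ref{prop:2.1}.

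First I would substitute $x=0$ into \eqref{1.3}: the first term vanishes because of the factor $x^{n+1}$, and $(x+1)^{n+1}=1$, so the equation collapses to $Q_n(0)=1$. Next, substituting $x=-1$ makes the $(x+1)^{n+1}$ term vanish and the $x^{n+1}$ factor becomes $(-1)^{n+1}$, yielding $P_n(-1)(-1)^{n+1}=1$, hence $P_n(-1)=(-1)^{n+1}$.

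For the values at $x=-\tfrac{1}{2}$ I would use both \eqref{1.3} and \eqref{2.1}. The symmetry \eqref{2.1} evaluated at $x=-\tfrac{1}{2}$ (which is a fixed point of $x\mapsto -1-x$) gives the single relation
\[
P_n(-\tfrac{1}{2})=(-1)^{n+1}Q_n(-\tfrac{1}{2}).
\]
Substituting $x=-\tfrac{1}{2}$ into \eqref{1.3} produces
\[
P_n(-\tfrac{1}{2})\left(-\tfrac{1}{2}\right)^{n+1}+Q_n(-\tfrac{1}{2})\left(\tfrac{1}{2}\right)^{n+1}=1,
\]
and plugging in the symmetry relation above turns this into $2Q_n(-\tfrac{1}{2})/2^{n+1}=1$, i.e.\ $Q_n(-\tfrac{1}{2})=2^n$, which then gives $P_n(-\tfrac{1}{2})=(-1)^{n+1}2^n$.

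There is no real obstacle here; the whole point is that each of the four evaluation points has been chosen so that either one of the two terms in \eqref{1.3} is killed, or the symmetry \eqref{2.1} forces the two unknowns to be proportional. The proof is essentially a bookkeeping exercise once Proposition~\ref{prop:2.1} is in hand.
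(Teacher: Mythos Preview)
Your proof is correct and follows essentially the same approach as the paper: evaluate \eqref{1.3} at $x=0$ and $x=-1$ to get $Q_n(0)$ and $P_n(-1)$ directly, then combine the evaluation of \eqref{1.3} at $x=-\tfrac{1}{2}$ with the symmetry \eqref{2.1} at the fixed point $x=-\tfrac{1}{2}$ to obtain the remaining two values. The only difference is the order in which you present the steps.
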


\begin{proof}
By substituting $x=-1$ in \eqref{1.3} we immediately get the first identity,
and similarly, $x=0$ gives the fourth identity. Next, if we set
$x=-\tfrac{1}{2}$ in \eqref{1.3}, we get
\[
(-1)^{n+1}P_{n}(-\tfrac{1}{2})+Q_{n}(-\tfrac{1}{2})=2^{n+1}.
\]
Also, either one of the equations in \eqref{2.1}, with $x=-\tfrac{1}{2}$,
gives $(-1)^{n+1}P_{n}(-\tfrac{1}{2})=Q_{n}(-\tfrac{1}{2})$. By combining
these last two identities we immediately get the third equation in \eqref{2.2},
and then also the second one.
\end{proof}

Another special value will be obtained later in this section. An important
consequence of Proposition~\ref{prop:2.1} is the fact that it suffices to
consider only one of $P_n$ and $Q_n$. For the remainder of this paper we will
therefore concentrate on the polynomial sequence $Q_n$, and begin by deriving
an explicit expression.

\begin{proposition}\label{prop:2.3}
For any integer $n\geq 0$ we have $\deg Q_n=n$, and
\begin{equation}\label{2.3}
Q_{n}(x)=\sum_{i=0}^n(-1)^i\binom{n+i}{i}x^i.
\end{equation}
\end{proposition}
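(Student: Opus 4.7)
The plan is to use the uniqueness of the pair $(P_n,Q_n)$ satisfying \eqref{1.3} with degrees $\le n$, by exhibiting an explicit candidate for $Q_n$ and then constructing the corresponding $P_n$.

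First I would observe that the defining equation \eqref{1.3}, reduced modulo $x^{n+1}$, says that $Q_n(x)$ is the inverse of $(x+1)^{n+1}$ in the quotient ring $\Q[x]/(x^{n+1})$. So I would start from the formal power series expansion
\[
(1+x)^{-(n+1)}=\sum_{k=0}^{\infty}\binom{-(n+1)}{k}x^{k}
   =\sum_{k=0}^{\infty}(-1)^{k}\binom{n+k}{k}x^{k},
\]
and set
\[
\widetilde{Q}_n(x):=\sum_{i=0}^{n}(-1)^{i}\binom{n+i}{i}x^{i},
\]
the truncation of this series at degree $n$. Then $\widetilde{Q}_n(x)(x+1)^{n+1}\equiv 1\pmod{x^{n+1}}$ by construction.

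Next, I would define
\[
\widetilde{P}_n(x):=\frac{1-\widetilde{Q}_n(x)(x+1)^{n+1}}{x^{n+1}},
\]
which is a polynomial by the previous congruence. A degree count gives $\deg\bigl(\widetilde{Q}_n(x)(x+1)^{n+1}\bigr)\le n+(n+1)=2n+1$, so $\deg\widetilde{P}_n\le n$. Moreover the leading coefficient $(-1)^n\binom{2n}{n}$ of $\widetilde{Q}_n$ is nonzero, so $\deg\widetilde{Q}_n=n$. Hence the pair $(\widetilde{P}_n,\widetilde{Q}_n)$ satisfies \eqref{1.3} with both degrees bounded by $n$, and by the uniqueness stated after \eqref{1.3} we conclude $Q_n=\widetilde{Q}_n$ and $\deg Q_n=n$.

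There is no real obstacle here; the only step needing any care is justifying that $\widetilde{P}_n$ has degree at most $n$, which is immediate from the degree bound on $\widetilde{Q}_n(x)(x+1)^{n+1}$. An alternative that I considered is to derive \eqref{2.3} from a recurrence or generating function proved independently in Section~\ref{sec2}, but the direct inversion argument above is the shortest route and has the advantage of also yielding $\deg Q_n=n$ simultaneously.
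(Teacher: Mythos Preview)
Your proof is correct and takes a genuinely different route from the paper's. The paper differentiates the defining identity \eqref{1.3}, uses coprimality of $x^n$ and $(x+1)^n$ to deduce the relation $(x+1)Q_n'(x)+(n+1)Q_n(x)=c_nx^n$, and then solves the resulting first-order recurrence for the coefficients of $Q_n$ starting from $Q_n(0)=1$. Your argument instead identifies $Q_n$ directly as the degree-$n$ truncation of the binomial series for $(1+x)^{-(n+1)}$ and appeals to the uniqueness of the pair $(P_n,Q_n)$. Your approach is shorter and more conceptual, and it yields $\deg Q_n=n$ for free. The paper's approach, on the other hand, produces as a by-product the differential relation \eqref{2.4}/\eqref{2.7}, which is the starting point for Proposition~\ref{prop:2.6}, the special values in Corollary~\ref{cor:2.7}, and the discriminant computations in Section~5; with your proof one would still need to derive that relation separately.
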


Before proving this representation, we not that the polynomials $Q_n(x)$ have
an interesting combinatorial interpretation: The $i$th coefficient of
$Q_n(-x)$ counts the numbers of lattice paths from $(0,0)$ to $(n,i)$ using
only the steps $(1,0)$ and $(0,1)$; see \cite[A046899]{OEIS}.

Given the simple form of the representation \eqref{2.3}, it is not surprising
that the polynomials $Q_n(-x)$ have been considered before. Using \eqref{2.3}
as definition, Gould \cite{Go1} derived some basic properties, including the
third identity in \eqref{2.2} and two more properties relevant to this paper;
those will be mentioned later in this section.

\begin{proof}[Proof of Proposition~\ref{prop:2.3}]
We differentiate both sides of the identity \eqref{1.3}), and after some
easy manipulation we get
$$
x^n\left(xP'_{n}(x)+(n+1)P_{n}(x)\right)
=-(x+1)^n\left((x+1)Q'_{n}(x)+(n+1)Q_{n}(x)\right).
$$
Since $x^{n}$ and $(x+1)^{n}$ are coprime, this last identity implies that
$x^n$ divides the expression $(x+1)Q'_{n}(x)+(n+1)Q_{n}(x)$. Consequently,
since $\deg Q_n\leq n$, we have
\begin{equation}\label{2.4}
(x+1)Q'_{n}(x)+(n+1)Q_{n}(x) = c_nx^n
\end{equation}
for some constant $c_n$. Now we write
\begin{equation}\label{2.5}
Q_{n}(x)=\sum_{i=0}^n q_{i,n}x^i
\end{equation}
and equate coefficients of $x^i$ on both sides of \eqref{2.4}. First we have
by Corollary~\ref{cor:2.2} that $q_{0,n}=Q_n(0)=1$, and for $i=0,1,\ldots,n-1$
we have
$$
iq_{i,n}+(i+1)q_{i+1,n}+(n+1)q_{i,n}=0,
$$
or equivalently
$$
q_{i+1,n}=-\frac{n+i+1}{i+1}q_{i,n},\quad i=0,1,\ldots,n-1.
$$
By iterating this, we immediately get, for $i=0,1,\ldots,n$,
$$
q_{i,n}=(-1)^{i}\frac{(n+1)\cdots (n+i)}{i!}=(-1)^{i}\binom{n+i}{i}.
$$
This, with \eqref{2.5}, completes the proof.
\end{proof}

By combining the identities \eqref{2.3} and \eqref{2.1}, we can also obtain
an explicit expression for $P_n(x)$.

\begin{corollary}\label{cor:2.4}
For any integer $n\geq 0$ we have $\deg P_n=n$, and
\begin{equation}\label{2.6}
P_{n}(x)=(-1)^{n+1}(2n+1)\binom{2n}{n}\sum_{i=0}^{n}\frac{1}{n+i+1}\binom{n}{i}x^{i}.
\end{equation}
\end{corollary}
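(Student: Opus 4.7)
The plan is to combine the two results already obtained: the symmetry $P_n(x)=(-1)^{n+1}Q_n(-1-x)$ from Proposition~\ref{prop:2.1} with the explicit expansion \eqref{2.3} of $Q_n$. Substituting \eqref{2.3} into the symmetry and using $(-1-x)^i=(-1)^i(1+x)^i$, the alternating signs cancel and I obtain
$$
P_n(x)=(-1)^{n+1}\sum_{i=0}^n\binom{n+i}{i}(1+x)^i.
$$
This already shows that $\deg P_n=n$, since the leading term comes from $i=n$ and is a nonzero multiple of $x^n$.

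Next I would expand each $(1+x)^i$ by the binomial theorem and interchange the order of summation to read off the coefficient of $x^j$. The statement then reduces to the single binomial identity
$$
\sum_{i=j}^{n}\binom{n+i}{i}\binom{i}{j}=\frac{(2n+1)\binom{2n}{n}\binom{n}{j}}{n+j+1}\qquad(0\le j\le n).
$$

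To prove this identity I would rewrite the summand using the standard factorial manipulation
$$
\binom{n+i}{i}\binom{i}{j}=\frac{(n+i)!}{n!\,j!\,(i-j)!}=\binom{n+j}{j}\binom{n+i}{n+j},
$$
which factors the $i$-independent piece $\binom{n+j}{j}$ out of the sum. The remaining sum $\sum_{i=j}^{n}\binom{n+i}{n+j}$ collapses, by the hockey-stick identity, to $\binom{2n+1}{n+j+1}$. A short factorial rearrangement of $\binom{n+j}{j}\binom{2n+1}{n+j+1}$ then yields the right-hand side above, completing the proof.

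I do not expect any serious obstacle: Proposition~\ref{prop:2.1} does all the nontrivial work, and the remaining combinatorial identity is routine. The only point requiring a little care is tracking the signs $(-1)^i$ versus $(-1)^{n+1}$ in the substitution step, and keeping the indices correct when applying hockey-stick.
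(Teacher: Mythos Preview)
Your proposal is correct and follows essentially the same route as the paper: both start from $P_n(x)=(-1)^{n+1}Q_n(-1-x)$, substitute \eqref{2.3}, expand $(1+x)^i$, and interchange summations to reduce to the same inner-sum identity. The only difference is that the paper dispatches the identity $\sum_{i=j}^n\binom{n+i}{i}\binom{i}{j}=\tfrac{2n+1}{n+j+1}\binom{2n}{n}\binom{n}{j}$ by citing Gould's tables, whereas you give a self-contained proof via the factorization $\binom{n+i}{i}\binom{i}{j}=\binom{n+j}{j}\binom{n+i}{n+j}$ and the hockey-stick identity---a mild improvement in that it removes the external reference.
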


\begin{proof}
Combining the first identity in \eqref{2.1} with \eqref{2.3}, we get
\begin{align*}
(-1)^{n+1}P_{n}(x)&=Q_{n}(-1-x)=\sum_{i=0}^{n}(-1)^{i}\binom{n+i}{i}(-1-x)^{i}\\
&=\sum_{i=0}^{n}\binom{n+i}{i}(x+1)^{i}
=\sum_{i=0}^{n}\binom{n+i}{i}\sum_{j=0}^{i}\binom{i}{j}x^{j}\\
&=\sum_{j=0}^{n}\left(\sum_{i=0}^{n}\binom{n+i}{i}\binom{i}{j}\right)x^{j},
\end{align*}
where we have used the fact that $\binom{i}{j}=0$ for $j>i$, and then changed
the order of summation. Finally we note that the inner sum in the last
expansion is known to have the evaluation
$\frac{2n+1}{n+j+1}\binom{2n}{n}\binom{n}{j}$; for instance, after an easy
reformulation of the sum one could use identity (1.48) in \cite{Go}. This
proves \eqref{2.6}.
\end{proof}

\begin{rem}\label{rem:2.5}
{\rm The identity \eqref{2.6} immediately gives the special values}
\begin{equation}\label{2.6a}
P_n(0) = (-1)^{n+1}\frac{2n+1}{n+1}\binom{2n}{n}
= (-1)^{n+1}\binom{2n+1}{n+1} = (-1)^{n+1}Q_n(-1).
\end{equation}
{\rm On the other hand, there do not seem to exist explicit formulas for $P_n(1)$
and $Q_n(1)$. However, the two sequences of their absolute values, namely
$(1, 5, 31, 209, 1471,$ $10625, \ldots)$ and $(1, 1, 4, 13, 46, 166, \ldots)$,
have some interesting combinatorial interpretations; see the entries
A178792 and A026641, respectively, in \cite{OEIS}. Also, it follows from
\eqref{1.3} that} $P_n(1)+2^{n+1}Q_n(1)=1$.
\end{rem}

We now return to the identity \eqref{2.4} and note that
$c_n=(2n+1)q_{n,n}=(-1)^n(2n+1)\binom{2n}{n}$, so that
\begin{equation}\label{2.7}
(x+1)Q'_{n}(x)+(n+1)Q_{n}(x) = (-1)^n(2n+1)\binom{2n}{n}x^n.
\end{equation}
The following generalization of this identity will be useful later. As usual,
$Q_n^{(k)}(x)$ will denote the $k$th derivative of $Q_n(x)$.

\begin{proposition}\label{prop:2.6}
For integers $k$ and $n$ with $1\leq k\leq n+1$ we have
\begin{equation}\label{2.8}
(x+1)Q_{n}^{(k)}(x)+(n+k)Q_{n}^{(k-1)}(x)=(-1)^n\frac{(2n+1)!}{n!}
\frac{x^{n-k+1}}{(n-k+1)!}.
\end{equation}
\end{proposition}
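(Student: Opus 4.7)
The plan is to prove the identity by induction on $k$, using \eqref{2.7} as the base case.

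For the base case $k=1$, the claim becomes
\[
(x+1)Q_n'(x) + (n+1)Q_n(x) = (-1)^n \frac{(2n+1)!}{n!}\cdot\frac{x^n}{n!},
\]
which is exactly \eqref{2.7} once one rewrites $(2n+1)\binom{2n}{n} = \frac{(2n+1)!}{(n!)^2}$. So there is nothing to do here.

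For the inductive step, assume \eqref{2.8} holds for some $k$ with $1 \leq k \leq n$. I would differentiate both sides of \eqref{2.8} with respect to $x$: the left-hand side becomes
\[
Q_n^{(k)}(x) + (x+1)Q_n^{(k+1)}(x) + (n+k)Q_n^{(k)}(x) = (x+1)Q_n^{(k+1)}(x) + (n+k+1)Q_n^{(k)}(x),
\]
while the right-hand side becomes $(-1)^n\frac{(2n+1)!}{n!}\cdot\frac{x^{n-k}}{(n-k)!}$. This is precisely the statement of \eqref{2.8} with $k$ replaced by $k+1$, completing the induction. The induction runs up to $k=n+1$, at which point $Q_n^{(n+1)} \equiv 0$ (since $\deg Q_n = n$) and the identity reduces to the true statement $(2n+1)Q_n^{(n)}(x) = (-1)^n \frac{(2n+1)!}{n!}$, consistent with the leading coefficient $q_{n,n} = (-1)^n\binom{2n}{n}$ computed in Proposition~\ref{prop:2.3}.

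There is no real obstacle: the proof is a one-line differentiation once the base case \eqref{2.7} is in hand. The only thing to watch is that the factor $(n+k)$ on the left shifts correctly to $(n+k+1)$ after differentiation, and that the power of $x$ on the right drops from $n-k+1$ to $n-k$, which matches the shift $k \mapsto k+1$ on both sides. An alternative would be to plug in the explicit expansion \eqref{2.3} and verify both sides term by term using the identity $(i+1)\binom{n+i+1}{i+1} = (n+i+1)\binom{n+i}{i}$, but the inductive argument is cleaner and avoids routine manipulation of binomial coefficients.
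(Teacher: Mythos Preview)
Your proof is correct and follows exactly the same approach as the paper: induction on $k$ with base case \eqref{2.7} and the inductive step obtained by differentiating both sides. The paper states this in slightly less detail, but the argument is identical.
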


\begin{proof}
We proceed by induction on $k$. For $k=1$, the identity \eqref{2.8} reduces
to \eqref{2.7}. Now we suppose that \eqref{2.8} holds for some $k\geq 1$, and
differentiate both sides with respect to $x$. Simplifying the resulting
identity, we get
\begin{equation}\label{2.8a}
(x+1)Q_{n}^{(k+1)}(x)+(n+k+1)Q_{n}^{(k)}(x)
= (-1)^n\frac{(2n+1)!}{n!}\frac{x^{n-k}}{(n-k)!},
\end{equation}
which is the same as \eqref{2.8} with $k$ replaced by $k+1$. This completes
the proof by induction.
\end{proof}

As a first application of Proposition~\ref{prop:2.6} we set $x=-1$ in
\eqref{2.8a}, which gives the following special values.

\begin{corollary}\label{cor:2.7}
For integers $0\leq k\leq n$ we have
\begin{equation}\label{2.8b}
Q_{n}^{(k)}(-1)
= \frac{(-1)^k}{n+k+1}\cdot\frac{(2n+1)!}{n!(n-k)!}.
\end{equation}
\end{corollary}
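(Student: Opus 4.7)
The plan is to follow the hint essentially verbatim: substitute $x=-1$ into identity \eqref{2.8a} and solve for $Q_n^{(k)}(-1)$. The recurrence in Proposition~\ref{prop:2.6} was designed so that setting $x=-1$ kills the $(x+1)Q_n^{(k+1)}(x)$ term, leaving a closed-form expression for $Q_n^{(k-1)}(-1)$ (equivalently, after the index shift in \eqref{2.8a}, for $Q_n^{(k)}(-1)$). So the real work is just bookkeeping of signs and factorials.

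First I would note that \eqref{2.8a} is exactly \eqref{2.8} with $k$ replaced by $k+1$, and since \eqref{2.8} holds for $1\le k\le n+1$, the identity \eqref{2.8a} is valid precisely in the range $0\le k\le n$ that we need. Then substituting $x=-1$ in \eqref{2.8a} gives
\[
(n+k+1)\,Q_n^{(k)}(-1)=(-1)^n\frac{(2n+1)!}{n!}\cdot\frac{(-1)^{n-k}}{(n-k)!}.
\]
Combining the sign factors $(-1)^n(-1)^{n-k}=(-1)^{2n-k}=(-1)^k$ and dividing through by $n+k+1$ (which is nonzero in the stated range) yields the asserted formula
\[
Q_n^{(k)}(-1)=\frac{(-1)^k}{n+k+1}\cdot\frac{(2n+1)!}{n!(n-k)!}.
\]

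Because \eqref{2.8a} is already proved in Proposition~\ref{prop:2.6}, there is essentially no obstacle here; the only thing to be careful about is checking that the endpoint $k=0$ is indeed covered by \eqref{2.8a} (it is, and one can sanity-check the resulting value $Q_n(-1)=\binom{2n+1}{n+1}$ against \eqref{2.6a}) and that the endpoint $k=n$ is consistent with $q_{n,n}=(-1)^n\binom{2n}{n}$ from Proposition~\ref{prop:2.3}, which gives $Q_n^{(n)}(-1)=n!\,q_{n,n}=(-1)^n(2n)!/n!$, matching the formula at $k=n$.
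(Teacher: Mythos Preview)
Your proof is correct and is exactly the paper's approach: the paper also simply sets $x=-1$ in \eqref{2.8a} to obtain the corollary. Your additional checks on the range of $k$ and the endpoint values are a nice touch but not strictly needed.
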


We note that for $k=0$ this last identity is consistent with the right-hand
side of \eqref{2.6a}.
As another application of Proposition~\ref{prop:2.6} we obtain a homogeneous
differential equation satisfied by the polynomials $Q_n(x)$.

\begin{proposition}\label{prop:2.8}
For $n\geq 0$ we have
\begin{equation}\label{2.9}
x(x+1)Q_{n}''(x)+(2x-n)Q_{n}'(x)-n(n+1)Q_{n}(x)=0.
\end{equation}
\end{proposition}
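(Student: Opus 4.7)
The plan is to eliminate the inhomogeneous right-hand sides in Proposition~\ref{prop:2.6} by taking a suitable linear combination of two instances of that identity, one with $k=1$ and one with $k=2$. These are exactly the two derivatives that appear in \eqref{2.9}, so this should yield the desired homogeneous equation directly.

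Concretely, I would first write down \eqref{2.7} (i.e.\ \eqref{2.8} with $k=1$),
\[
(x+1)Q_n'(x) + (n+1)Q_n(x) = C\,x^n,
\]
where $C = (-1)^n(2n+1)\binom{2n}{n} = (-1)^n\frac{(2n+1)!}{n!\,n!}$, and alongside it \eqref{2.8a} (i.e.\ \eqref{2.8} with $k=2$),
\[
(x+1)Q_n''(x) + (n+2)Q_n'(x) = C\,n\,x^{n-1}.
\]
The ratio of the two right-hand sides is $n/x$, which suggests multiplying the second identity by $x$ and the first by $n$, so that both right-hand sides become $C n x^n$.

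Subtracting the resulting equations, the $x^n$-terms cancel and we obtain
\[
x(x+1)Q_n''(x) + \bigl((n+2)x - n(x+1)\bigr)Q_n'(x) - n(n+1)Q_n(x) = 0.
\]
Collecting the coefficient of $Q_n'(x)$ gives $(n+2)x - n(x+1) = 2x - n$, which is precisely \eqref{2.9}.

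There is no real obstacle here: the argument is a one-line elimination once Proposition~\ref{prop:2.6} is in hand. The only minor point worth checking is the case $n=0$, where the identity with $k=2$ is vacuous, but \eqref{2.9} then reduces to $0=0$ since $Q_0(x)=1$ is constant, so the statement holds trivially there as well.
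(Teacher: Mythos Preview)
Your proof is correct and follows essentially the same approach as the paper: multiply \eqref{2.8} for $k=2$ by $x$, multiply \eqref{2.8} for $k=1$ by $n$, and subtract to eliminate the inhomogeneous term. Your explicit check of the trivial case $n=0$ is a nice addition that the paper omits.
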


\begin{proof}
We use \eqref{2.8} with $k=2$ and multiply both sides by $x$, obtaining
\begin{equation}\label{2.10}
x(x+1)Q''_{n}(x)+(n+2)xQ'_{n}(x)=(-1)^n\frac{(2n+1)!}{n!}\frac{x^n}{(n-1)!}.
\end{equation}
Then we use \eqref{2.8} again, this time with $k=1$, and multiply both sides
by $n$, which gives
\[
n(x+1)Q'_{n}(x)+n(n+1)Q_{n}(x))=(-1)^n\frac{(2n+1)!}{n!}\frac{x^n}{(n-1)!}.
\]
By subtracting this last identity from \eqref{2.10}, we immediately get
\eqref{2.9}.
\end{proof}

Next we present a second-order linear recurrence relation satisfied by the sequence of polynomials $Q_{n}^{(k)}(x)$.

\begin{proposition}\label{prop:2.9}
Let $k\geq 0$ be an integer, and set
\begin{align*}
u_{k,n}(x)&=n(n+k)(x+1)\big(2(n+k-1)x+n+2k-1\big),\\
v_{k,n}(x)&=-2(n+k-1)(2(n+k)-1)x^2\big(2(n+k)x+3n+4k\big)\\
          &\qquad-(n+2k-1)(n+2k)\big(2(n+k-1)x-n\big),\\
w_{k,n}(x)&=2(n+2k-1)(2(n+k)-1)x\big(2(n+k)x+n+2k\big).
\end{align*}
Then the polynomials $Q_{k+n}^{(k)}(x)$ satisfy the recurrence relation
\begin{equation}\label{2.11}
u_{k,n}(x)Q_{k+n}^{(k)}(x)=v_{k,n}(x)Q_{k+n-1}^{(k)}(x)
+w_{k,n}(x)Q_{k+n-2}^{(k)}(x)\qquad (n\geq 2),
\end{equation}
with initial conditions
\[
Q_{k}^{(k)}(x)=(-1)^{k}\frac{(2k)!}{k!}\quad\hbox{and}\quad
Q_{k+1}^{(k)}(x)=(-1)^{k}\frac{(2k+1)!}{(k+1)!}(1-2(k+1)x).
\]
\end{proposition}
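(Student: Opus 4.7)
My approach is based on the explicit formula~\eqref{2.3}. Differentiating $k$ times, one finds
\[
Q_{k+n}^{(k)}(x) = (-1)^k \sum_{j=0}^{n} (-1)^{j}\, \frac{(n+2k+j)!}{(n+k)!\,j!}\, x^{j}.
\]
Setting $n=0$ leaves the single term $(-1)^k (2k)!/k!$, which gives the first initial condition; setting $n=1$ produces a linear polynomial that simplifies at once to $(-1)^k \frac{(2k+1)!}{(k+1)!}\bigl(1-2(k+1)x\bigr)$, the second initial condition.

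For the recurrence~\eqref{2.11}, the coefficient polynomials $u_{k,n}$, $v_{k,n}$, $w_{k,n}$ have $x$-degrees $2$, $3$, $2$ respectively, so both sides are polynomials in $x$ of degree at most $n+2$. The identity is therefore equivalent to matching the coefficient of $x^j$ on each side for $0 \le j \le n+2$. Substituting the closed form above and expanding $u_{k,n}$, $v_{k,n}$, $w_{k,n}$ in powers of $x$, each coefficient-of-$x^j$ identity becomes a short linear combination of factorial ratios of the shape $(N+j-i)!/[D!\,(j-i)!]$, indexed by $i \in \{0,1,2,3\}$ and by three consecutive values of $(N,D)$ corresponding to the three polynomials $Q_{k+n}^{(k)}$, $Q_{k+n-1}^{(k)}$, $Q_{k+n-2}^{(k)}$. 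After dividing through by the common factor obtained from these consecutive factorial ratios, each identity collapses to a polynomial equation in $k$, $n$, $j$ of bounded degree, which is then verified by elementary algebra.

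The main obstacle is computational rather than conceptual. The three coefficient polynomials are bulky, and the resulting polynomial identities, though elementary, involve products of many linear factors in $k$, $n$, $j$. I plan to handle this by treating the generic range $2 \le j \le n$ uniformly -- once $j$ lies strictly in the interior of the summation range, the shape of the identity stabilises -- and by checking the four boundary values $j \in \{0, 1, n+1, n+2\}$ separately. A more structural alternative would be to combine the contiguous relation~\eqref{2.8} (at neighbouring values of $n$) with the $k$-fold derivative of the ODE~\eqref{2.9}, and to eliminate the first and second derivatives of $Q_{k+n}^{(k)}$ from the resulting linear system; this yields~\eqref{2.11} directly but at the cost of comparable algebraic labour and without obvious further insight.
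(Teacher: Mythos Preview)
Your approach is essentially identical to the paper's: both derive the closed form $Q_{k+n}^{(k)}(x)=(-1)^k\sum_{j=0}^n(-1)^j\frac{(n+2k+j)!}{(n+k)!\,j!}x^j$ from~\eqref{2.3}, read off the initial conditions, and then verify~\eqref{2.11} by a direct coefficient comparison, which the paper calls ``a straightforward but tedious computation'' and leaves to the reader. Your plan to split into the generic range $2\le j\le n$ and the four boundary values is a sensible way to organize that verification, but it is precisely the omitted computation rather than a different method.
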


In the special case $k=0$ the expressions $u_{0,n}, v_{0,n}, w_{0,n}$ have the
common factor $(2x+1)n(n-1)$, and we obtain the following much simpler
recurrence relation.

\begin{corollary}\label{cor:2.10}
The polynomials $Q_{n}(x)$ satisfy
\begin{equation}\label{2.11a}
n(x+1)Q_{n}(x)=-(2(2n-1)x^2+2(2n-1)x-n)Q_{n-1}(x)+2(2n-1)xQ_{n-2}(x)
\end{equation}
for $n\geq 2$, with initial conditions $Q_{0}(x)=1$ and $Q_{1}(x)=-2x+1$.
\end{corollary}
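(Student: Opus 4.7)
The plan is to deduce this corollary as the specialization $k=0$ of the recurrence \eqref{2.11} in Proposition~\ref{prop:2.9}, following the hint in the statement. I would start by substituting $k=0$ directly into the three coefficient polynomials, which after straightforward simplification become
\begin{align*}
u_{0,n}(x) &= n^{2}(n-1)(x+1)(2x+1),\\
v_{0,n}(x) &= n(n-1)\bigl[-4(2n-1)x^{3}-6(2n-1)x^{2}-2(n-1)x+n\bigr],\\
w_{0,n}(x) &= 2n(n-1)(2n-1)\,x(2x+1).
\end{align*}
The common factor $n(n-1)$ is already visible in all three, and the factor $(2x+1)$ appears at sight in $u_{0,n}$ and $w_{0,n}$.

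The one nontrivial step is extracting $(2x+1)$ from $v_{0,n}(x)$. I would verify that the bracketed cubic vanishes at $x=-\tfrac{1}{2}$ (a short numerical check using $(-\tfrac12)^{3}=-\tfrac18$ and $(-\tfrac12)^{2}=\tfrac14$) and then perform the polynomial division, obtaining
$$
v_{0,n}(x)=n(n-1)(2x+1)\bigl[-2(2n-1)x^{2}-2(2n-1)x+n\bigr].
$$
Dividing both sides of \eqref{2.11} by the common factor $(2x+1)n(n-1)$, which is nonzero in $\Q[x]$ for $n\geq 2$, then yields exactly the identity \eqref{2.11a}.

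The initial conditions $Q_{0}(x)=1$ and $Q_{1}(x)=-2x+1$ follow immediately from the explicit formula \eqref{2.3}; they also agree with the $k=0$ case of the initial data listed in Proposition~\ref{prop:2.9}. The main obstacle in this plan is purely bookkeeping: correctly expanding $v_{0,n}$ and carrying out the division by $(2x+1)$. Once the root $x=-\tfrac{1}{2}$ is identified, the computation is mechanical and everything else is linear algebra over $\Q[x]$.
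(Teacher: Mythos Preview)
Your proposal is correct and follows exactly the route indicated in the paper: specialize Proposition~\ref{prop:2.9} to $k=0$, observe that $u_{0,n}$, $v_{0,n}$, $w_{0,n}$ all share the factor $n(n-1)(2x+1)$, and divide through. The paper states this in one sentence without carrying out the algebra; your expansion of $v_{0,n}$ and the division by $(2x+1)$ are the details that justify that sentence.
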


\begin{proof}[Proof of Proposition~\ref{prop:2.9}]
By differentiating the explicit formula \eqref{2.3}, we obtain
\begin{equation}\label{2.11b}
Q_{k+n}^{(k)}(x)
=\sum_{i=0}^{n}(-1)^{i+k}\frac{(i+k)!}{i!}\binom{n+i+2k}{i+k}x^{i},
\end{equation}
which immediately gives the initial conditions. A straightforward but tedious
computation, using \eqref{2.11b}, then shows that the recurrence relation
\eqref{2.11} is satisfied. We leave the details to the reader.
\end{proof}

\begin{rem}\label{rem:2.11}
{\rm While the above proof would be sufficient, a few words about the discovery
of the recurrence relations are perhaps in order.
The relation \eqref{2.11a} was first guessed with the Maple package {\tt EKHAD}
written by D. Zeilberger, which can be obtained through the online supplement
to the book \cite{PWZ}. In particular, the procedure {\tt findrec}, applied to
the explicit formula \eqref{2.3}, gives the second degree difference operator
\begin{align*}
&((1+n)N+2x(1+2n))(N(x+1)-1)\\
&\qquad=(1 + n) (1 + x)N^{2}+(2(2n+1)x^2+2(2n+1)x-n-1)N-2(1 + 2 n)x,
\end{align*}
where $N$ is the forward unit shift operator, i.e., $N(f_{n})=f_{n+1}$.
Replacing $N^{i}$ by $Q_{n+i}(x)$ and then shifting $n$ to $n-2$ gives the
recurrence relation \eqref{2.11a}. The correctness of Zeilberger's algorithm
gives an alternative proof that the recurrence is satisfied for all $n$.

Given this nice result, we suspected that similar relations should hold also
for the $k$th derivative of the polynomials $Q_{n+k}(x)$. We checked that for
each fixed small $k$ we have a formula similar to \eqref{2.11a}, and then
guessed the general form \eqref{2.11} from the particular cases.
}
\end{rem}

At this point it should also be mentioned that Gould \cite{Go1} derived a
first-order inhomogeneous recurrence relation for $Q_n(-x)$; in our notation
it can be written as
\[
(x+1)Q_{n+1}(x) = Q_n(x) + \binom{2n+1}{n+1}(2x+1)(-x)^{n+1}.
\]
This identity was then used in \cite{Go1} to obtain the following generating
function. For the sake of completeness we give a different proof, based on
Corollary~\ref{cor:2.10}.

\begin{proposition}\label{prop:2.10}
The ordinary generating function for the polynomials $Q_n(x)$ is given by
\begin{equation}\label{2.12}
\frac{1+4xt+(1+2x)\sqrt{1+4xt}}{2(1+x-t)(1+4xt)}=\sum_{n=0}^{\infty}Q_{n}(x)t^n.
\end{equation}
\end{proposition}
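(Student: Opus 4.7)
The plan is to convert the three-term recurrence from Corollary~\ref{cor:2.10} into a first-order linear ODE in $t$ satisfied by $F(x,t):=\sum_{n\ge0}Q_n(x)t^n$, then solve it explicitly by an integrating factor and match the given closed form.

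Step 1: Write the recurrence \eqref{2.11a} as
\[
n(x+1)Q_n(x)=\bigl[-2(2n-1)(x^2+x)+n\bigr]Q_{n-1}(x)+2(2n-1)x\,Q_{n-2}(x),
\]
multiply by $t^n$, and sum over $n\ge2$. Standard manipulations
\[
\sum_{n\ge0}nQ_nt^n=tF_t,\qquad \sum_{n\ge0}(2n+1)Q_nt^n=2tF_t+F,
\]
together with index shifts $n\mapsto n-1$ and $n\mapsto n-2$ (and subtracting the missing $n=0,1$ pieces, which contribute the terms $Q_0=1$ and $Q_1=1-2x$), turn the recurrence into
\begin{equation*}
(1+4xt)(1+x-t)\,F_t \;=\; (1-2x-2x^2+6xt)\,F \;+\; x,
\end{equation*}
with initial condition $F(x,0)=Q_0(x)=1$. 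Collecting terms carefully, the coefficient of $F_t$ factors as $t(1+4xt)(1+x-t)$, which is the reason the closed form contains exactly these three irreducible factors in its denominator.

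Step 2: Solve the ODE by an integrating factor. Partial fractions in $t$ give
\[
\frac{1-2x-2x^2+6xt}{(1+4xt)(1+x-t)}=\frac{-2x}{1+4xt}+\frac{1}{1+x-t},
\]
so the integrating factor is $h(t)=\sqrt{1+4xt}\,(1+x-t)$. Multiplying the ODE by $h$ and simplifying, the right-hand side collapses to $x/\sqrt{1+4xt}$, which integrates to $\tfrac{1}{2}\sqrt{1+4xt}+C(x)$. Dividing through yields
\[
F(x,t)=\frac{1}{2(1+x-t)}+\frac{C(x)}{(1+x-t)\sqrt{1+4xt}}.
\]
The initial condition $F(x,0)=1$ forces $C(x)=\tfrac{1}{2}(1+2x)$, and combining the two fractions over the common denominator $2(1+x-t)(1+4xt)$ produces exactly \eqref{2.12}.

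Step 3 (uniqueness). Since $F$ is a formal power series in $t$ whose coefficients are determined by $Q_0$, $Q_1$ and the recurrence, and since the ODE plus initial value has a unique formal power series solution, the computation above identifies $F$ with the claimed closed form.

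The main obstacle is the bookkeeping in Step~1: the mixture of $n$, $(2n-1)$, and $Q_{n-1}$, $Q_{n-2}$ coefficients produces many terms that must be reassembled into exactly the right rational functions of $x$ and $t$, and one must carefully account for the missing low-order terms of each sum so as to recover the additive constant $x$ on the right. Once the ODE is in hand, the partial-fraction decomposition and the fortunate simplification $h(t)\cdot\frac{x}{(1+4xt)(1+x-t)}=\frac{x}{\sqrt{1+4xt}}$ make the integration straightforward, and the algebraic tidying at the end is routine.
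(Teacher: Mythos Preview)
Your proof is correct and follows essentially the same route as the paper: convert Corollary~\ref{cor:2.10} into a first-order linear ODE in $t$ for the generating function, solve it, and pin down the constant of integration from $F(x,0)=1$. The paper arrives at the same ODE (its displayed version carries a superfluous common factor $(4x-1)t-1$ which cancels to give exactly your equation) and the same general solution $\tfrac{1}{2(1+x-t)}+\tfrac{C(x)}{(1+x-t)\sqrt{1+4xt}}$; your integrating-factor computation via the partial fraction $\frac{-2x}{1+4xt}+\frac{1}{1+x-t}$ is a more explicit version of what the paper calls ``standard methods.'' One cosmetic slip: your parenthetical remark that the $F_t$-coefficient factors as $t(1+4xt)(1+x-t)$ with ``three irreducible factors'' contradicts your own displayed ODE (no factor of $t$) and the closed form (only two nonconstant factors in the denominator); drop or fix that sentence, but it does not affect the argument.
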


\begin{proof}
We denote the expression on the right of \eqref{2.12} by $\mathcal{Q}(x,t)$, and
let $\mathcal{Q}'(x,t)$ be the derivative with respect to $t$. Then we easily
obtain the identities
\begin{align*}
\sum_{n=0}^{\infty}nQ_{n}(x)t^{n}  &=t\mathcal{Q}'(x,t),\\
\sum_{n=0}^{\infty}nQ_{n+1}(x)t^{n}&=\frac{1}{t}(t\mathcal{Q}'(x,t)-\mathcal{Q}(x,t)+1),\\
\sum_{n=0}^{\infty}nQ_{n+2}(x)t^{n}&=\frac{1}{t^2}(t\mathcal{Q}'(x,t)-2\mathcal{Q}(x,t)+2(1-2x)t).
\end{align*}
We now use these identities and apply standard methods to transform the
recurrence relation \eqref{2.11}, with $n$ replaced by $n+2$, into the
differential equation
\begin{align*}
&(t-x-1)(4tx+1)(4tx-t-1)\mathcal{Q}'(x,t)\\
&\qquad+(6xt-2x^2-2x+1)((4x-1)t-1)\mathcal{Q}(x,t)+x((4x-1)t-1)=0.
\end{align*}
Using standard methods for solving first-degree linear differential equations,
we get the general solution
$$
\mathcal{Q}(x,t)=\frac{1}{2 (1+x-t)}+\frac{c_1}{(1+x-t) \sqrt{1+4xt}}.
$$
The initial condition $\mathcal{Q}'(x,0)=1$ leads to $c_{1}=\frac{1}{2}(2x+1)$, and
we get the desired solution
$$
\mathcal{Q}(x,t)=\frac{1}{2(1+x-t)}\left(\frac{\sqrt{1+4xt}+2x+1}{\sqrt{1+4xt}}\right).
$$
Finally, we obtain \eqref{2.12} by multiplying numerator and denominator of this
last equation by $\sqrt{1+4xt}$.
\end{proof}

\section{A Pell-type polynomial equation}\label{sec3}

In this section and the next we will, more generally, consider the equation
\begin{equation}\label{3.1}
P_n(x)Y(x)^{n+1} + Q_n(x)Z(x)^{n+1} = 1
\end{equation}
for integers $n\geq 0$ and unknown polynomials $Y, Z\in{\mathbb Q}[x]$. We
began this paper from the point of view of fixed polynomials $Y(x)=x$ and
$Z(x)=x+1$; with the conditions $\deg P_n\leq n$, $\deg Q_n\leq n$ this had led
us to the unique polynomial sequences $P_n, Q_n$ in Sections~1 and~2.

We now ask the following ``inverse question": Given our polynomials $P_n, Q_n$,
what can we say about polynomial solutions $Y, Z$ of \eqref{3.1}? Are there
solutions other than $Y(x)=x, Z(x)=x+1$?

The case $n=0$ is of no interest since $P_0=-1, Q_0=1$ implies that for any
$Y\in{\mathbb Q}[x]$ (or ${\mathbb Z}[x]$) there is a $Z\in{\mathbb Q}[x]$
(or ${\mathbb Z}[x]$) satisfying \eqref{3.1}. Hence we assume $n\geq 1$.

It turns out that the case $n=1$ is of particular interest, and is very
different from the case $n\geq 2$. This section will be devoted to this special
case; that is, since $P_1(x)=2x+3$ and $Q_1(x)=1-2x$, we consider the
equation
\begin{equation}\label{3.2}
(2x+3)Y(x)^2 + (1-2x)Z(x)^2 = 1,
\end{equation}
in unknown polynomials $Y, Z\in{\mathbb Q}[x]$.

\begin{proposition}\label{prop:3.1}
The equation \eqref{3.2} has infinitely many solutions which, for $n\geq 0$,
are given by
\begin{equation}\label{3.3}
\begin{cases}
&Y(x)=Y_n(x)
= \frac{1}{2}\left(U_{n+1}(x+\tfrac{1}{2})-U_n(x+\tfrac{1}{2})\right),\\
&Z(x)=Z_n(x)
= \frac{1}{2}\left(U_{n+1}(x+\tfrac{1}{2})+U_n(x+\tfrac{1}{2})\right),
\end{cases}
\end{equation}
where $U_n(y)$ are the Chebyshev polynomials of the second kind. Furthermore,
for $n\geq 1$ we have the recurrence relations
\begin{equation}\label{3.4}
\begin{cases}
&Y_{n+1}(x) = (2x+1)Y_n(x) - Y_{n-1}(x), \\
&Z_{n+1}(x) = (2x+1)Z_n(x) - Z_{n-1}(x),
\end{cases}
\end{equation}
with initial conditions $Y_0(x)=x$, $Y_1(x)=2x^2+x-\tfrac{1}{2}$,
$Z_0(x)=x+1$, and $Z_1(x)=2x^2+3x+\tfrac{1}{2}$.
\end{proposition}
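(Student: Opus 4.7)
The plan is to reduce equation (3.2) to a classical Chebyshev identity via the substitution $\xi=x+\tfrac{1}{2}$. Under this substitution, $2x+1=2\xi$, $2x+3=2(\xi+1)$, and $1-2x=2(1-\xi)$, while formulas (3.3) become $Y_n=\tfrac{1}{2}(U_{n+1}(\xi)-U_n(\xi))$ and $Z_n=\tfrac{1}{2}(U_{n+1}(\xi)+U_n(\xi))$. Writing $U_j$ for $U_j(\xi)$ and substituting into the left-hand side of (3.2), I would expand using the polarization identities $(U_{n+1}\pm U_n)^2 = U_{n+1}^2 + U_n^2 \pm 2U_nU_{n+1}$, so that the left-hand side simplifies to
\[
(2x+3)Y_n^2+(1-2x)Z_n^2 \;=\; U_{n+1}^2 + U_n^2 - 2\xi\, U_n U_{n+1}.
\]

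To show this equals $1$, I would invoke the Chebyshev three-term recurrence $U_{n+1}(\xi)=2\xi U_n(\xi)-U_{n-1}(\xi)$; multiplying by $U_{n+1}$ gives $2\xi U_n U_{n+1} = U_{n+1}^2 + U_{n-1}U_{n+1}$, which converts the above expression into $U_n^2 - U_{n-1}U_{n+1}$. This is a well-known Cassini-type identity for Chebyshev polynomials of the second kind, equal to $1$, and easily verified either by the substitution $\xi=\cos\theta$ together with product-to-sum identities, or by a short induction on $n$ using the same three-term recurrence. This establishes that each pair $(Y_n,Z_n)$ given by (3.3) is indeed a solution of (3.2), and since $\deg Y_n=n+1$, the solutions are pairwise distinct, so there are infinitely many.

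The recurrences in (3.4) are then essentially inherited from the Chebyshev recurrence. Applying $U_{m+1}(\xi)=(2x+1)U_m(\xi)-U_{m-1}(\xi)$ with $m=n+1$ to both linear combinations in (3.3) immediately yields $Y_{n+1}=(2x+1)Y_n-Y_{n-1}$ and $Z_{n+1}=(2x+1)Z_n-Z_{n-1}$. The initial conditions follow from the standard evaluations $U_0(\xi)=1$, $U_1(\xi)=2\xi=2x+1$, and $U_2(\xi)=4\xi^2-1=4x^2+4x$. The only real obstacle here is recognizing the substitution $\xi=x+\tfrac{1}{2}$ and the rotation $(Y_n,Z_n)\leftrightarrow(U_{n+1}\mp U_n)/2$ that jointly convert the apparently ad hoc equation (3.2) into the familiar Cassini identity for $U_n$; after that, everything reduces to standard facts about Chebyshev polynomials.
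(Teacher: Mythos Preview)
Your proof is correct, and it takes a genuinely different route from the paper's. The paper proceeds constructively: it multiplies \eqref{3.2} by $2x+3$ to obtain a Pell-type equation $X^2-\big((x+\tfrac12)^2-1\big)W^2=2x+3$, feeds in the known particular solution coming from $(Y,Z)=(x,x+1)$, and then uses the standard multiplication of a Pell solution by units $T_n+U_{n-1}\sqrt{d}$ to generate the family. This first yields the mixed $T_n/U_{n-1}$ formulas \eqref{3.12}, and the paper then checks that those agree with \eqref{3.3} via the shared recurrence and initial values. Your approach, by contrast, plugs \eqref{3.3} straight into \eqref{3.2} and, after the substitution $\xi=x+\tfrac12$, reduces everything in one line to the Cassini--Tur\'an identity $U_n^2-U_{n-1}U_{n+1}=1$, which you then justify directly.

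What each approach buys: the paper's Pell-equation argument is discovery-oriented --- it explains where the formulas \eqref{3.3} come from and why Chebyshev polynomials appear at all --- but it is longer and passes through the auxiliary representation \eqref{3.12}. Your argument is shorter and more self-contained, needing only the three-term recurrence for $U_n$ and the Cassini identity, at the cost of presupposing the exact form of \eqref{3.3}. Given that the proposition already states \eqref{3.3}, your verification is a perfectly good proof.
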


\begin{proof}
We transform the equation \eqref{3.2} to a Pell-type equation by multiplying
both sides by $2x+3$. Noting that
\[
(2x-1)(2x+3) = 4\left((x+\tfrac{1}{2})^2-1\right),
\]
we then get the equation
\begin{equation}\label{3.5}
\bigl((2x+3)Y(x)\bigr)^2-\left((x+\tfrac{1}{2})^2-1\right)(2Z(x))^2 = 2x+3.
\end{equation}
We now use the following known results from the theory of Pell equations; see,
e.g., \cite[p.~354]{NZM}.

Suppose that $(x,y)=(r_0,s_0)$ is a solution of the Pell-type equation
\begin{equation}\label{3.6}
x^2 - dy^2 = N,
\end{equation}
where $d, N\in{\mathbb Z}$, and $d>1$ is not a perfect square. Furthermore,
suppose that $(x,y)=(x_n,y_n)$, $n-1, 2, 3,\ldots$, are the solutions of the
Pell equation
\begin{equation}\label{3.7}
x^2 - dy^2 = 1.
\end{equation}
Then the pairs $(r_n,s_n)$, defined by
\begin{equation}\label{3.8}
r_n+s_n\sqrt{d} = (r_0+s_0\sqrt{d})(x_n+y_n\sqrt{d}),
\end{equation}
are also solutions of \eqref{3.6}. We note that the equation \eqref{3.6} may or
may not have solutions, while \eqref{3.7} always has infinitely many solutions
that can all be given, for instance by way of recurrence relations. By
expanding the right-hand side of \eqref{3.8} and equating rational and
irrational terms, we immediately get
\begin{equation}\label{3.9}
r_n = r_0x_n + d\,s_0y_n,\qquad s_n = s_0x_n + r_0y_n.
\end{equation}
Comparing \eqref{3.5} with \eqref{3.6} and noting that $Y(x)=x$ and $Z(x)=x+1$
is a solution of \eqref{3.2}, we have
\begin{equation}\label{3.10}
r_0=x(2x+3),\quad s_0=2(x+2),\quad d=(x+\tfrac{1}{2})^2-1,\quad N=2x+3.
\end{equation}
Furthermore, comparing \eqref{3.5} (having 1 instead of $2x+3$ on the right)
with \eqref{1.1}, we see that
\begin{equation}\label{3.11}
x_n = T_n(x+\tfrac{1}{2}),\qquad y_n = U_{n-1}(x+\tfrac{1}{2}),\quad n=1,2,\ldots
\end{equation}
This holds also for $n=0$ since $T_0(y)=1$ and, by convention, $U_{-1}(y)=0$.
Finally, with $r_n=(2x+3)Y(x)$ and $s_n=2Z(x)$, the identities
\eqref{3.9}, \eqref{3.10}, \eqref{3.11} yield
\begin{equation}\label{3.12}
\begin{cases}
&Y(x)=Y_n(x) = xT_n(x+\tfrac{1}{2})
+\tfrac{1}{2}(2x-1)(x+1)U_{n-1}(x+\tfrac{1}{2}),\\
&Z(x)=Z_n(x) = (x+1)T_n(x+\tfrac{1}{2})
+\tfrac{1}{2}x(2x+3)U_{n-1}(x+\tfrac{1}{2}).
\end{cases}
\end{equation}
The initial conditions following
\eqref{3.4} can be computed from \eqref{3.12}, using $T_0(y)=U_0(y)=1$ and
$T_1(y)=y$. Furthermore, the well-known recurrence relation for the Chebyshev
polynomials, namely $T_{n+1}(x)=2xT_n(x)-T_{n-1}(x)$ leads to
$$
T_{n+1}(x+\tfrac{1}{2})=(2x+1)T_n(x+\tfrac{1}{2})-T_{n-1}(x+\tfrac{1}{2}),
$$
with the same relation also for the sequence $U_n(x+\tfrac{1}{2})$. This means
that the polynomial sequences $Y_n(x)$ and $Z_n(x)$ in \eqref{3.12} satisfy
this relation as well.

Finally, using the same argument as in the previous paragraph, we see that the
right-hand sides of the expressions in \eqref{3.3} satisfy \eqref{3.4} as well.
Using the fact that $U_1(y)=2y$ and $U_2(y)=4y^2-1$, we can easily verify that
the terms on the right in \eqref{3.3} also satisfy the initial conditions
following \eqref{3.4}. This proves the identities in \eqref{3.3}, and the proof
of the the proposition is complete.
\end{proof}


\bigskip
\begin{center}
\begin{tabular}{|c|r|r|}
\hline
$n$ & $Y_n(x)$ & $Z_n(x)$ \\
\hline
0 & $x$ & $x+1$ \\
1 & $2x^2+x-\tfrac{1}{2}$ & $2x^2+3x+\tfrac{1}{2}$ \\
2 & $4x^3+4x^2-x-\tfrac{1}{2}$ & $4x^3+8x^2+3x-\tfrac{1}{2}$\\
3 & $x(8x^3+12x^2-3)$ & $(x+1)(8x^3+12x^2-1)$ \\
4 & $16x^5+32x^4+8x^3-10x^2-2x+\tfrac{1}{2}$ & $16x^5+48x^4+40x^3+2x^2+6x-\tfrac{1}{2}$  \\
\hline
\end{tabular}

\medskip
{\bf Table~2}: $Y_n(x)$ and $Z_n(x)$ for $0\leq n\leq 4$.
\end{center}
\bigskip

We can now use Proposition~3.1 to show that the polynomials $Y_n$ and $Z_n$ are
``almost in ${\mathbb Z}[x]$", in the following sense.

\begin{corollary}\label{cor:3.2}
Let $n\geq 0$ be an integer. Then

$(a)$ $\;Y_n(x)\in{\mathbb Z}[x]$ and $Z_n(x)\in{\mathbb Z}[x]$ when
$n\equiv 0\pmod{3}$;

$(b)$ $\;Y_n(x)+\tfrac{1}{2}\in{\mathbb Z}[x]$ and
$Z_n(x)+\tfrac{1}{2}\in{\mathbb Z}[x]$ when $n\not\equiv 0\pmod{3}$.

$(c)$ More specifically, we have
\[
Y_n(0)=\begin{cases}
-\tfrac{1}{2} &\hbox{if}\; n\equiv 1, 2\pmod{6},\\
\tfrac{1}{2} &\hbox{if}\; n\equiv 4, 5\pmod{6},\\
0 &\hbox{if}\; n\equiv 0\pmod{3};
\end{cases}
\quad
Z_n(0)=\begin{cases}
-\tfrac{1}{2} &\hbox{if}\; n\equiv 2, 4\pmod{6},\\
\tfrac{1}{2} &\hbox{if}\; n\equiv 1, 5\pmod{6},\\
(-1)^k &\hbox{if}\; n=3k.
\end{cases}
\]
\end{corollary}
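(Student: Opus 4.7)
All three parts follow rapidly from the second-order recurrences \eqref{3.4}; no further use of the Chebyshev formula \eqref{3.3} will be needed.

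For parts (a) and (b), I would analyse the doubled polynomials $2Y_n$ and $2Z_n$ modulo $2$ in $\mathbb{F}_2[x]$. Multiplying \eqref{3.4} by $2$ shows that the doubled sequences satisfy the same recurrences, and since $(2x+1)\equiv -1\equiv 1\pmod 2$, both reduce modulo $2$ to the Fibonacci-type relation $a_{n+1}\equiv a_n+a_{n-1}$ in $\mathbb{F}_2[x]$. The initial conditions $2Y_0=2x\equiv 0$, $2Y_1=4x^2+2x-1\equiv 1$, $2Z_0=2x+2\equiv 0$, $2Z_1=4x^2+6x+1\equiv 1$ are all constants, so the resulting Fibonacci sequence in $\mathbb{F}_2[x]$ is $0,1,1,0,1,1,\ldots$, periodic with period $3$. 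Hence $2Y_n\equiv 2Z_n\equiv 0\pmod 2$ exactly when $n\equiv 0\pmod 3$, which gives (a); otherwise $2Y_n\equiv 2Z_n\equiv 1\pmod 2$, meaning $Y_n-\tfrac{1}{2}$ and $Z_n-\tfrac{1}{2}$ lie in $\mathbb{Z}[x]$, which is equivalent to (b) since $\tfrac{1}{2}$ and $-\tfrac{1}{2}$ differ by an integer.

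For (c), the plan is to specialize the recurrences at $x=0$, obtaining the scalar linear recurrences $Y_{n+1}(0)=Y_n(0)-Y_{n-1}(0)$ and $Z_{n+1}(0)=Z_n(0)-Z_{n-1}(0)$. Both have characteristic polynomial $t^2-t+1$, whose roots are primitive sixth roots of unity; consequently every solution is periodic with period $6$. Using the initial values $Y_0(0)=0$, $Y_1(0)=-\tfrac{1}{2}$, $Z_0(0)=1$, $Z_1(0)=\tfrac{1}{2}$ supplied by Proposition~\ref{prop:3.1}, a direct computation of the first six terms yields
$$(Y_n(0))_{n=0}^{5}=(0,-\tfrac{1}{2},-\tfrac{1}{2},0,\tfrac{1}{2},\tfrac{1}{2}),\qquad (Z_n(0))_{n=0}^{5}=(1,\tfrac{1}{2},-\tfrac{1}{2},-1,-\tfrac{1}{2},\tfrac{1}{2}),$$
and these values match the piecewise formulas in the statement after reducing $n$ modulo $6$. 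The case $n=3k$ of the $Z_n$ formula, namely $Z_{3k}(0)=(-1)^k$, is immediate from the periodicity together with $Z_0(0)=1$ and $Z_3(0)=-1$.

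The argument has no serious obstacle: everything rests on the two observations that the mod-$2$ reduction of \eqref{3.4} produces a Fibonacci recurrence in $\mathbb{F}_2[x]$ with period $3$, and that the specialization at $x=0$ produces a scalar recurrence whose characteristic polynomial is the sixth cyclotomic polynomial, hence period $6$. Both periodicities are then read off from the initial data.
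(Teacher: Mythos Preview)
Your proposal is correct and follows essentially the same line as the paper's proof: both rely solely on the recurrences \eqref{3.4}, specialize to $x=0$ to obtain the period-$6$ scalar recurrence for part~(c), and read off (a) and (b) from the resulting periodicity. The only cosmetic difference is that the paper first argues by induction that all non-constant coefficients of $Y_n$ and $Z_n$ are integers (since $2x+1\in\mathbb{Z}[x]$) and then deduces (a) and (b) from the constant terms computed in (c), whereas you handle (a) and (b) in one stroke by reducing $2Y_n$ and $2Z_n$ in $\mathbb{F}_2[x]$.
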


\begin{proof}
We first note that $Y_n$ and $Z_n$ satisfy all three statements for $n=0$ and
$n=1$. Next, the coefficient $2x+1$ in the recurrence relations \eqref{3.4}
guarantees that by induction on $n$ we have that all coefficients of all
$Y_n, Z_n$, with the possible exception of the constant coefficients, are
integers.

Finally, it is clear from \eqref{3.4} that the constant coefficients of $Y_n$
satisfy the recurrence relation $Y_{n+1}(0)=Y_n(0)-Y_{n-1}(0)$, and similarly
for the sequence $Z_n$. Part (c) then follows from a simple induction, and
the remaining statements in (a) and (b) also follow.
\end{proof}

We observe in Table~2 that both $Y_3(x)$ and $Z_3(x)$ are reducible. Further
computations show that this seems to be the case for all $Y_{3n}(x)$ and
$Z_{3n}(x)$, a fact confirmed by the following result.

\begin{proposition}\label{prop:3.3}
For all integers $n\geq 1$, the polynomials $Y_{3n}(x)$ and $Z_{3n}(x)$ are
reducible. Specifically, if we set $y:=x+\tfrac{1}{2}$ for simplicity, we have
\begin{equation}\label{3.13}
\begin{cases}
&Y_{3n}(x)=\frac{1}{2}\left(U_n(y)-U_{n-1}(y)\right)
\left(U_{2n+1}(y)-U_{2n-1}(y)-1\right)\\
&Z_{3n}(x)=\frac{1}{2}\left(U_n(y)+U_{n-1}(y)\right)
\left(U_{2n+1}(y)-U_{2n-1}(y)+1\right)\\
\end{cases}
\end{equation}
\end{proposition}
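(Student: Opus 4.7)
The plan is to substitute the closed forms \eqref{3.3} into both sides of \eqref{3.13} and reduce the claimed factorizations to two Chebyshev identities in the single variable $y=x+\tfrac{1}{2}$. Doubling both sides and writing $N=3n$ in \eqref{3.3}, the two assertions become
\begin{align*}
U_{3n+1}(y)-U_{3n}(y) &= \bigl(U_n(y)-U_{n-1}(y)\bigr)\bigl(U_{2n+1}(y)-U_{2n-1}(y)-1\bigr),\\
U_{3n+1}(y)+U_{3n}(y) &= \bigl(U_n(y)+U_{n-1}(y)\bigr)\bigl(U_{2n+1}(y)-U_{2n-1}(y)+1\bigr).
\end{align*}
The first simplification is to use the standard identity $U_{k+1}(y)-U_{k-1}(y)=2T_{k+1}(y)$, which replaces $U_{2n+1}-U_{2n-1}$ by $2T_{2n+1}$ and puts both identities into the compact form
\[
U_{3n+1}(y)\mp U_{3n}(y) \;=\; \bigl(U_n(y)\mp U_{n-1}(y)\bigr)\bigl(2T_{2n+1}(y)\mp 1\bigr).
\]

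To verify these, I would substitute $y=\cos\theta$, so that $U_k(y)=\sin((k+1)\theta)/\sin\theta$ and $T_k(y)=\cos(k\theta)$, and apply the sum-to-product formulas. Writing $\phi=(2n+1)\theta/2$, the left-hand sides simplify to $2\cos(3\phi)\sin(\theta/2)/\sin\theta$ and $2\sin(3\phi)\cos(\theta/2)/\sin\theta$ respectively, while the factors $U_n(y)\mp U_{n-1}(y)$ become $2\cos\phi\sin(\theta/2)/\sin\theta$ and $2\sin\phi\cos(\theta/2)/\sin\theta$. The identities thereby reduce to the triple-angle formulas
\[
\cos 3\phi = \cos\phi\,(2\cos 2\phi - 1),\qquad \sin 3\phi = \sin\phi\,(2\cos 2\phi + 1),
\]
which are elementary (for instance via product-to-sum: $2\cos\phi\cos 2\phi=\cos 3\phi+\cos\phi$, and similarly for the sine). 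Since the resulting polynomial identities in $y$ hold for infinitely many values $y=\cos\theta$, they hold identically in $\Q[y]$, which is all that is needed.

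A purely algebraic alternative, which I would mention as a cleaner route, uses the Chebyshev product formula $2T_m(y)U_k(y)=U_{m+k}(y)+U_{k-m}(y)$ together with the convention $U_{-k-1}(y)=-U_{k-1}(y)$. Applying this with $m=2n+1$ and $k=n,n-1$ gives $2T_{2n+1}U_n=U_{3n+1}-U_{n-1}$ and $2T_{2n+1}U_{n-1}=U_{3n}-U_n$; subtracting and adding and then correcting by the linear term $\mp(U_n\mp U_{n-1})$ yields the two identities directly.

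The main obstacle is essentially just picking the right reformulation; once one recognizes that the factor $U_{2n+1}-U_{2n-1}=2T_{2n+1}$ suggests the substitution $\phi=(2n+1)\theta/2$, both identities collapse to triple-angle formulas, so there is no real difficulty beyond routine bookkeeping.
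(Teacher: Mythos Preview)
Your proof is correct. Your algebraic alternative is essentially the paper's argument: the paper also rewrites $U_{2n+1}-U_{2n-1}$ as $2T_{2n+1}$ via the product formula $2T_j(y)U_k(y)=U_{j+k}(y)-U_{j-k-2}(y)$ and then applies it with $(j,k)=(2n+1,n)$ and $(2n+1,n-1)$ to collapse the first identity; the only difference is that the paper obtains the second identity from the first by the substitution $y\mapsto -y$ together with the parity $U_m(-y)=(-1)^mU_m(y)$, whereas you derive it directly by adding rather than subtracting the two instances of the product formula. Your primary trigonometric route via $y=\cos\theta$ and the half-angle variable $\phi=(2n+1)\theta/2$, reducing everything to $\cos 3\phi=\cos\phi(2\cos 2\phi-1)$ and $\sin 3\phi=\sin\phi(2\cos 2\phi+1)$, is a genuinely different (and arguably more transparent) derivation that the paper does not use.
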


\begin{proof}
In view of \eqref{3.3}, the first identity in \eqref{3.13} is equivalent to
\begin{equation}\label{3.14}
U_{3n+1}(y)-U_{3n}(y)
= \left(U_n(y)-U_{n-1}(y)\right)\left(U_{2n+1}(y)-U_{2n-1}(y)-1\right).
\end{equation}
We now use the well-known identity
\begin{equation}\label{3.15}
2T_j(y)U_k(y) = U_{j+k}(y) - U_{j-k-2}(y),\qquad k\leq j-2
\end{equation}
(see, e.g., \cite{Ri}), and set $j=2n+1$, $k=0$ and note that $U_0(y)=1$.
With this, the right-hand side of \eqref{3.14} becomes
\begin{align*}
&\left(U_n(y)-U_{n-1}(y)\right)\left(2T_{2n+1}(y)-1\right) \\
&\qquad =2T_{2n+1}(y)U_n(y)-2T_{2n+1}(y)U_{n-1}(y)-U_n(y)+U_{n-1}(y) \\
&\qquad =\left(U_{3n+1}(y)-U_{n-1}(y)\right)-\left(U_{3n}(y)-U_n(y)\right)
-U_n(y)+U_{n-1}(y) \\
&\qquad = U_{3n+1}(y)-U_{3n}(y),
\end{align*}
where in the second equation above we have used \eqref{3.15} two more times.
This proves \eqref{3.14}

For the second part of \eqref{3.13}, we replace $y$ by $-y$ in \eqref{3.14} and
use the symmetry property $U_n(-y)=(-1)^nU_n(y)$. Then we get
\begin{align*}
&(-1)^{n+1}U_{3n+1}(y)-(-1)^nU_{3n}(y) \\
&\qquad= \left((-1)^nU_n(y)-(-1)^{n-1}U_{n-1}(y)\right)
\left(-U_{2n+1}(y)+U_{2n-1}(y)-1\right).
\end{align*}
Multiplying both sides by $(-1)^{n+1}$, we get
\[
U_{3n+1}(y)+U_{3n}(y)
= \left(U_n(y)+U_{n-1}(y)\right)\left(U_{2n+1}(y)-U_{2n-1}(y)+1\right),
\]
which is equivalent to the desired identity.
\end{proof}

By iterating the factorizations on the right of \eqref{3.13}, we obtain the
following easy consequence.

\begin{corollary}\label{cor:3.3}
For all integers $n\geq 1$ and $k\geq 1$, the polynomials $Y_{3^kn}(x)$ and
$Z_{3^kn}(x)$ have at least $k+1$ nonconstant factors.
\end{corollary}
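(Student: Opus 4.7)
The plan is to prove the corollary by induction on $k$, iteratively applying the factorization in \eqref{3.13}. The base case $k=1$ is Proposition~\ref{prop:3.3} itself: it already provides the two-factor decomposition of $Y_{3n}$ and $Z_{3n}$, giving the required $k+1=2$ nonconstant factors.

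For the inductive step I would apply \eqref{3.13} to $Y_{3^k n}$ with the proposition's $n$ replaced by $M:=3^{k-1}n$. Using $U_{2M+1}(y)-U_{2M-1}(y)=2T_{2M+1}(y)$ together with $\tfrac{1}{2}(U_M(y)-U_{M-1}(y))=Y_{M-1}(x)$ (where $y=x+\tfrac{1}{2}$), this reads
\[
Y_{3^k n}(x) = Y_{M-1}(x)\cdot\bigl(2T_{2M+1}(y)-1\bigr).
\]
The crucial point is that for $k\ge 2$ we have $3\mid M$, so $(2M+1)\pi/3\equiv \pi/3\pmod{2\pi}$, whence $T_{2M+1}(1/2)=\cos(\pi/3)=1/2$. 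This forces $(y-1/2)=x$ to divide the second factor, producing a three-factor decomposition $Y_{3^k n}(x) = Y_{M-1}(x)\cdot x\cdot H(x)$ with $H$ nonconstant, and in particular settles the case $k=2$. The argument for $Z_{3^k n}$ is parallel: under the same hypothesis, $T_{2M+1}(-1/2)=\cos(2\pi/3)=-1/2$, so that $(y+1/2)=x+1$ divides $2T_{2M+1}(y)+1$.

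For $k\ge 3$ the plan is to iterate this extraction: at each additional level, identify a sub-factor that is itself amenable either to \eqref{3.13} or to the rational-root observation just used, and peel off one further nonconstant factor per step, eventually accumulating $k+1$ factors in total.

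The main obstacle, I anticipate, will be justifying that every iteration really does contribute a new nonconstant factor. Neither $Y_{M-1}(x)$ (whose index $M-1$ is not a multiple of $3$ once $k\ge 2$) nor the residual $H(x)$ is itself of the form $Y_{3m}$ or $Z_{3m}$, so the continuing factorization must be produced by other means --- most naturally by invoking the Chebyshev composition identity $T_{ab}(y)=T_a(T_b(y))$ at subsequent levels, which introduces additional factors of the form $T_b(y)-1/2$ that inherit $x$ as a root whenever $b$ is a multiple of an index making $T_b(1/2)=1/2$. Confirming that such a new nonconstant factor is always available at every stage of the iteration is the technical heart of the argument.
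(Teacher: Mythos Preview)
Your base case and inductive setup match the paper's intended argument: apply \eqref{3.13} with $n$ replaced by $M=3^{k-1}n$ to obtain $Y_{3^kn}=Y_{M-1}\cdot\bigl(2T_{2M+1}(y)-1\bigr)$. You rightly observe that the index $M-1=3^{k-1}n-1$ is congruent to $2\pmod 3$ once $k\ge 2$, so \eqref{3.13} cannot be reapplied to the first factor; the paper's one-line ``iteration'' glosses over exactly this point. Your rescue for $k=2$ --- extracting the factor $x$ from $2T_{2M+1}(y)-1$ via the rational root $y=\tfrac12$ --- is correct and already goes beyond what the paper supplies.

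For $k\ge 3$, however, the difficulty you anticipate is not a technicality to be patched but a genuine obstruction: the corollary as stated appears to be false. Take $k=3$, $n=1$. Then $Y_{27}=Y_8\cdot\bigl(2T_{19}(y)-1\bigr)$. The factor $Y_8=\tfrac12(U_9-U_8)$ has the nine roots $\cos\bigl((2j{+}1)\pi/19\bigr)$, each arising from a primitive $38$th root of unity; they form a single Galois orbit, so $Y_8$ is irreducible of degree~$9$ over $\mathbb{Q}$. For the other factor, $2T_{19}(y)-1=0$ forces $\zeta=e^{i\theta}$ (with $y=\cos\theta$) to have $\zeta^{19}$ a primitive sixth root of unity; an order computation modulo $114$ shows $\zeta$ must then be a primitive $6$th root (yielding the simple root $y=\tfrac12$, i.e.\ the linear factor $x$) or a primitive $114$th root (yielding the remaining $\phi(114)/2=18$ roots, a single Galois orbit and hence an irreducible factor of degree~$18$). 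Thus $Y_{27}$ has exactly three irreducible factors over $\mathbb{Q}$, not at least four, and no Chebyshev-composition or rational-root argument can manufacture a fourth nonconstant factor. So your instinct that the iteration breaks down is well founded; the gap lies in the statement itself, not in your method.
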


\section{The equation \eqref{3.1} in general}\label{sec4}

In this section we return to the question posed at the beginning of
Section~\ref{sec3}. The following result deals with the general case. As before,
$P_n$ and $Q_n$ denote the polynomials studied in Sections~\ref{sec1}
and~\ref{sec2}, and given by the explicit formulas \eqref{2.3} and \eqref{2.6}.

\begin{proposition}\label{prop:4.1}
Let $n\geq 1$ be an integer, and consider the Diophantine equation
\begin{equation}\label{4.1}
P_n(x)Y(x)^{n+1} + Q_n(x)Z(x)^{n+1} = 1
\end{equation}
in unknown polynomials $Y, Z$.

\noindent
$(a)$ If $n=1$, then the equation \eqref{4.1} has infinitely many solutions
$Y, Z\in{\mathbb Q}[x]$.

\noindent
$(b)$ If $n\geq 2$, then the only solutions $Y, Z\in{\mathbb C}[x]$ of
\eqref{4.1} are $Y(x)=\zeta x$, $Z(x)=\xi(x+1)$, where $\zeta$ and $\xi$ are
arbitrary $(n+1)$th roots of unity.
\end{proposition}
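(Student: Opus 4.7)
Part (a) follows immediately from Proposition \ref{prop:3.1}. For part (b), my plan starts from the observation that subtracting \eqref{1.3} from \eqref{4.1} gives the key identity
\[
P_n(x)\bigl(Y(x)^{n+1} - x^{n+1}\bigr) = Q_n(x)\bigl((x+1)^{n+1} - Z(x)^{n+1}\bigr);
\]
since \eqref{1.3} is itself a Bezout relation, $\gcd(P_n,Q_n)=1$ in $\mathbb{C}[x]$, so $Q_n \mid Y^{n+1}-x^{n+1}$ and $P_n \mid (x+1)^{n+1}-Z^{n+1}$.

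The plan then proceeds in three steps. First, apply the Mason--Stothers theorem to $(-P_n Y^{n+1}) + (Q_n Z^{n+1}) = 1$; using the elementary bound $n_0(P_n Q_n Y^{n+1} Z^{n+1}) \leq 2n + \deg Y + \deg Z$, the Mason--Stothers inequality reads
\[
n + (n+1)\max(\deg Y,\deg Z) \leq 2n + \deg Y + \deg Z - 1,
\]
which for $n \geq 2$ forces $\deg Y, \deg Z \leq 1$; a direct leading-coefficient comparison in \eqref{4.1} rules out the subcases where either $Y$ or $Z$ is constant, so I may write $Y = \alpha x + \alpha'$, $Z = \beta x + \beta'$ with $\alpha, \beta \neq 0$. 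Second, introduce the quotient
\[
R(x) := \frac{Y(x)^{n+1} - x^{n+1}}{Q_n(x)} = \frac{(x+1)^{n+1} - Z(x)^{n+1}}{P_n(x)},
\]
a polynomial of degree at most $1$. If $R = 0$, then $Y^{n+1} = x^{n+1}$ and $Z^{n+1} = (x+1)^{n+1}$, giving exactly the claimed families $Y = \zeta x$, $Z = \xi(x+1)$.

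The third and hardest step is to exclude $R \neq 0$. When $R$ is a nonzero constant, one has $\alpha^{n+1} = 1$ and $(x+s)^{n+1} - x^{n+1} = r\, Q_n(x)$ with $s = \alpha'/\alpha$ and $r = s^{n+1}$; comparing the coefficients of $x^0$ and $x^1$ against \eqref{2.3} forces $s = -1$ and $r = (-1)^{n+1}$, after which the $x^2$-coefficient comparison (available precisely because $n \geq 2$) yields the absurdity $0 = 2$. When $R$ is linear, the divisibility $Q_n \mid Y^{n+1} - x^{n+1} = \prod_\zeta(Y - \zeta x)$, together with $\alpha' \neq 0$, forces each of the $n$ roots $\sigma$ of $Q_n$ to have the rigid form $\sigma = \alpha'/(\zeta - \alpha)$ for $n$ distinct $(n+1)$-th roots of unity $\zeta$; matching the elementary symmetric functions $\sum \sigma_i = \tfrac{1}{2}$ and $\prod \sigma_i = 1/\binom{2n}{n}$ (read off \eqref{2.3}) with those coming from this parametrization, together with the parallel constraints for $Z$ coming from $P_n \mid (x+1)^{n+1} - Z^{n+1}$ and the symmetry $P_n(x) = (-1)^{n+1} Q_n(-1-x)$ of Proposition \ref{prop:2.1}, produces an over-determined algebraic system which I expect to be inconsistent for $n \geq 2$. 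Carrying out this last incompatibility cleanly --- perhaps by reducing both sides, via the symmetry, to a single Mobius-type relation between two ordered subsets of the $(n+1)$-th roots of unity --- is the main obstacle.
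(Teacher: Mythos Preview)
Your approach diverges from the paper's at two points. For the degree bound you invoke Mason--Stothers, whereas the paper differentiates \eqref{4.1} and uses the resulting divisibilities $Y^n\mid Q_n'Z+(n+1)Q_nZ'$ and $Z^n\mid P_n'Y+(n+1)P_nY'$ to reach the same conclusion $\deg Y,\deg Z\le 1$; both routes are clean. Your introduction of the quotient $R(x)=(Y^{n+1}-x^{n+1})/Q_n=((x+1)^{n+1}-Z^{n+1})/P_n$ is new and organises the degree-$1$ case nicely: the subcases $R=0$ and $R$ a nonzero constant are dispatched correctly, and your elimination of the constant-$Y$ or constant-$Z$ possibilities is fine.

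The genuine gap is the one you name yourself: the linear-$R$ subcase is left as an ``expectation'', not a proof. Your root parametrisation (each zero of $Q_n$ of the form $\alpha'/(\zeta-\alpha)$ for some $(n{+}1)$th root of unity $\zeta$) is valid, but matching it against the elementary symmetric functions of the roots of $Q_n$ is at best a delicate calculation, and you have not carried it out. The paper bypasses this parametrisation entirely: writing $Y=ax+c$, $Z=bx+d$, it equates the coefficients of $x^{2n+1}$ and $x^{2n}$ in \eqref{4.1} to get $a^{n+1}=b^{n+1}$ and $a(b-d)+bc=0$, then evaluates \eqref{4.1} at $x=0$ to obtain $(-1)^{n+1}\binom{2n+1}{n+1}c^{n+1}+d^{n+1}=1$, and finally argues from specific small instances of this last relation that $c=0$ (whence $b=d$). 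If you want to close your gap in the spirit of the paper, the natural move is to extract the analogous scalar constraints from the top two coefficients of $Y^{n+1}-x^{n+1}=R\,Q_n$ and $(x+1)^{n+1}-Z^{n+1}=R\,P_n$, together with their values at $x=0$ and $x=-1$, rather than attempting to pin down all $n$ roots of $Q_n$ individually.
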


\begin{proof}
Differentiating both sides of \eqref{4.1}, we get
\begin{align*}
&\left(P'_n(x)Y(x)+(n+1)P_n(x)Y'(x)\right)Y(x)^n\\
&\qquad\qquad=-\left(Q'_n(x)Z(x)+(n+1)Q_n(x)Z'(x)\right)Z(x)^n.
\end{align*}
The polynomials $Y$ and $Z$ must be coprime, so this implies
$$
\left.Y(x)^n\right| Q'_n(x)Z(x)+(n+1)Q_n(x)Z'(x),\quad
\left.Z(x)^n\right| P'_n(x)Y(x)+(n+1)P_n(x)Y'(x).
$$
Since $\deg{P_n}=\deg{Q_n}=n$, we clearly have
\[
\deg\left(P'_n(x)Y(x)+(n+1)P_n(x)Y'(x)\right)\leq \deg\left(P'_n(x)Y(x)\right)
= n-1+\deg{Y},
\]
and similarly with $P_n$ and $Y$ replaced by $Q_n$ and $Z$, respectively. Hence
\begin{equation}\label{4.2}
n-1+\deg{Y} \geq n\deg{Z}\quad\hbox{and}\quad n-1+\deg{Z} \geq n\deg{Y},
\end{equation}
and consequently $n-1+\deg{Y} \geq n(\deg{Y}-n+1)$, which is equivalent to
\[
(n^2-1) \geq (n^2-1)\deg{Y}.
\]
This last inequality holds only in the following cases:

(1) $\;n=1$.

(2) $\;n\geq 2$ and $\deg{Y}=0$; then by \eqref{4.2} also $\deg{Z}=0$.

(3) $\;n\geq 2$ and $\deg{Y}=1$; then by \eqref{4.2} also $\deg{Z}=1$.

The case (1) was covered by Proposition~\ref{prop:3.1}, and leads to part (a)
of this proposition. We therefore continue with $\deg{Y}=\deg{Z}=0$ and set
$Y=a$ and $Z=b$ with $a,b\in\mathbb C$. Then \eqref{4.1} becomes
\begin{equation}\label{4.3}
P_n(x)a^{n+1} + Q_n(x)b^{n+1} = 1\qquad(n\geq 2).
\end{equation}
We write the expressions \eqref{2.6} and \eqref{2.3} more explicitly as
\begin{equation}\label{4.4}
\begin{cases}
& P_n(x)=(-1)^{n+1}\binom{2n}{n}\left(x^n+(n+\tfrac{1}{2})x^{n-1}+\cdots\right),\\
& Q_n(x)=(-1)^{n+1}\binom{2n}{n}\left(-x^n+\tfrac{1}{2}x^{n-1}-\cdots\right).
\end{cases}
\end{equation}
Using this and equating coefficients of $x^{n+1}$ and of $x^n$ in \eqref{4.3}
we get, respectively,
\[
a^{n+1}-b^{n+1}=0\qquad\hbox{and}\qquad (2n+1)a^{n+1}+b^{n+1}=0.
\]
This system of equations has the unique solution $a=b=0$, which contradicts
\eqref{4.3}. Hence there are no solutions in case (2).

To deal with case (3), we set $Y(x)=ax+c$ and $Z(x)=bx+d$. Then \eqref{4.1}
becomes
\begin{align}
&P_n(x)\left(a^{n+1}x^{n+1}+(n+1)a^ncx^n+\cdots\right) \label{4.5} \\
&\qquad\qquad+ Q_n(x)\left(b^{n+1}x^{n+1}+(n+1)b^ndx^n+\cdots\right) = 1,\nonumber
\end{align}
where $n\geq 2$. First we equate coefficients of $x^{2n+1}$ in \eqref{4.5},
using \eqref{4.4}. This gives
\begin{equation}\label{4.6}
a^{n+1} = b^{n+1}\qquad (a\neq 0).
\end{equation}
Similarly, equating coefficients of $x^{2n}$ in \eqref{4.5}, we get
\[
(n+\tfrac{1}{2})a^{n+1}+(n+1)a^nc+\tfrac{1}{2}b^{n+1}-(n+1)b^nd = 0.
\]
We now multiply both sides of this last identity by $b$, use \eqref{4.6}, and
divide everything by $(n+1)a^n$. This gives
\begin{equation}\label{4.7}
a(b-d)+bc =0.
\end{equation}
Next we show that $c=0$. To do so, we set $x=0$ in \eqref{4.1} and use the
value of $P_(0)$ given in Remark~2.5, along with the fact that $Q_n(0)=1$
(see Corollary~\ref{cor:2.2}). Then we have
\begin{equation}\label{4.8}
(-1)^{n+1}\binom{2n+1}{n+1}c^{n+1} + d^{n+1} = 1\qquad (n\geq 2).
\end{equation}
First, with $n=2$ nd $n=5$, this gives
\[
-\binom{5}{3}c^3+d^3=1\quad\hbox{and}\quad \binom{11}{6}c^6+d^6=1.
\]
Substituting $d^3=1+10c^3$ into the second identity above, we obtain
\[
\left(\binom{11}{6}+100\right)c^6+20c^3 = 0,
\]
which means that either $c=0$, or
\begin{equation}\label{4.9}
\left(\binom{11}{6}+100\right)c^3 = -20.
\end{equation}
Second, with $n=3$ and $n=7$ in \eqref{4.8}, we find in the same way that
\[
\left(\binom{15}{8}+\binom{7}{4}^2\right)c^8-2\binom{7}{4}c^4 = 0,
\]
which again means that either $c=0$, or $rc^4=s$ for certain integers $r,s>0$.
But the arguments of the four possible solutions in $c\in\mathbb C$ are
different from those in \eqref{4.9}.
Therefore we have $c=0$, and consequently $b=d$ by \eqref{4.7}. This, with
\eqref{4.6}, finally gives part (b) of the result.
\end{proof}

To conclude this section, we consider a question related to the original
problem concerning the equation \eqref{1.2}. If instead of the polynomials
$x$ and $x+1$ we consider an arbitrary fixed pair of coprime polynomials
$Y,Z\in{\mathbb Z}[x]$, what can we say about possible solutions of the
equation
\begin{equation}\label{4.10}
p_n(x)Y(x)^{n+1} + q_n(x)Z(x)^{n+1} = 1\qquad (n\geq 0),
\end{equation}
in unknown polynomials $p_n, q_n\in{\mathbb Z}[x]$? We know that the existence
of solutions is not guaranteed since ${\mathbb Z}[x]$ is not a Euclidean
domain. However, we have the following result.

\begin{proposition}\label{prop:4.2}
Let $Y,Z\in{\mathbb Z}[x]$ be coprime polynomials and suppose that there exist
$p_0, q_0\in{\mathbb Z}[x]$ such that
\begin{equation}\label{4.11}
p_0(x)Y(x) + q_0(x)Z(x) = 1.
\end{equation}
Then for each integer $n\geq 1$ there are polynomials
$p_n, q_n\in{\mathbb Z}[x]$ such that \eqref{4.10} holds.
\end{proposition}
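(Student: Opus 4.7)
The plan is to bootstrap from the given Bezout-type identity in degree $1$ to arbitrary $n \geq 1$ by raising it to an appropriate power and splitting the binomial expansion. Since $\mathbb{Z}[x]$ is not a Euclidean domain we cannot simply appeal to Bezout at higher powers, but the hypothesis \eqref{4.11} plus the integrality of binomial coefficients will give everything for free.

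Concretely, starting from $p_0(x)Y(x) + q_0(x)Z(x) = 1$, I would raise both sides to the $(2n+1)$st power and expand by the binomial theorem to obtain
\begin{equation*}
1 = \sum_{k=0}^{2n+1}\binom{2n+1}{k}p_0(x)^k\, q_0(x)^{2n+1-k}\, Y(x)^k\, Z(x)^{2n+1-k}.
\end{equation*}
The key observation is that the exponents of $Y$ and $Z$ in each summand sum to $2n+1$, so at least one of them is $\geq n+1$. Splitting the sum according to whether $k \geq n+1$ or $k \leq n$, each term of the first subsum contains $Y(x)^{n+1}$ as a factor, and each term of the second subsum contains $Z(x)^{n+1}$ as a factor.

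I would therefore define
\begin{align*}
p_n(x) &= \sum_{k=n+1}^{2n+1}\binom{2n+1}{k}p_0(x)^k\, q_0(x)^{2n+1-k}\, Y(x)^{k-n-1}\, Z(x)^{2n+1-k},\\
q_n(x) &= \sum_{k=0}^{n}\binom{2n+1}{k}p_0(x)^k\, q_0(x)^{2n+1-k}\, Y(x)^{k}\, Z(x)^{n-k},
\end{align*}
both of which lie in $\mathbb{Z}[x]$ since $p_0, q_0, Y, Z \in \mathbb{Z}[x]$ and the binomial coefficients are integers. By construction, the identity \eqref{4.10} is satisfied.

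There is no real obstacle here: the argument is a one-line application of the binomial theorem once one notices that $(n+1) + (n+1) > 2n+1$, which forces one of the two exponents in each binomial term to be at least $n+1$. The integrality of $p_n, q_n$ is automatic because no division is ever performed, which is precisely the point needed to stay inside $\mathbb{Z}[x]$ rather than merely $\mathbb{Q}[x]$.
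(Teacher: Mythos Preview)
Your proof is correct, but it takes a genuinely different route from the paper's. The paper proceeds by induction on $n$: assuming $p_nY^{n+1}+q_nZ^{n+1}=1$, it multiplies this by the base relation $p_0Y+q_0Z=1$ and then rewrites the cross terms (using the induction hypothesis once more) to extract the factors $Y^{n+2}$ and $Z^{n+2}$, obtaining explicit recursive formulas \eqref{4.13}--\eqref{4.14} for $p_{n+1}$ and $q_{n+1}$. Your approach instead raises the base relation to the $(2n+1)$st power and splits the binomial expansion in one stroke.

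Your argument is shorter and, in fact, quantitatively sharper: in the motivating case $Y(x)=x$, $Z(x)=x+1$, $p_0=-1$, $q_0=1$, every term in your sum for $p_n$ (and $q_n$) has degree exactly $n$, so your construction recovers the optimal polynomials $P_n,Q_n$ of Section~\ref{sec2}. By contrast, as the paper itself notes in Remark~\ref{rem:4.3}, the inductive recursion produces polynomials of degree $2^{n+1}-n-2$. What the paper's route buys is the pair of closed-form recursions \eqref{4.13}--\eqref{4.14}, and its structure carries over verbatim to any commutative ring (cf.\ the remark opening Section~9); your binomial argument does too, of course, since it also uses only commutativity and integer binomial coefficients.
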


\begin{proof}
We proceed by induction on $n$. The induction beginning is given by the
hypothesis \eqref{4.11}, which corresponds to $n=0$. We now assume that for
some $n\geq 0$ there are polynomials $p_n, q_n\in{\mathbb Z}[x]$ such that
\eqref{4.10} holds, and multiply the equation with \eqref{4.11}. Then we get
\begin{equation}\label{4.12}
p_0p_nY^{n+2}+YZ\left(p_0q_nZ^n+q_0p_nY^n\right)+q_0q_nZ^{n+2} = 1,
\end{equation}
where for greater ease of notation we have suppressed the variable $x$.
Multiplying the identity \eqref{4.10} by $Y^n$ and by $Z^n$, we get respectively\[
Y^n=p_nY^{2n+1}+q_nY^nZ^{n+1},\qquad Z^n=p_nY^{n+1}Z^n+q_nZ^{2n+1},
\]
and this gives
\begin{align*}
&YZ\left(p_0q_nZ^n+q_0p_nY^n\right) \\
&\qquad =YZ\left[p_0q_n\left(p_nY^{n+1}Z^n+q_nZ^{2n+1}\right)
+q_0p_n\left(p_nY^{2n+1}+q_nY^nZ^{n+1}\right)\right] \\
&\qquad =\left(p_0p_nq_nZ^n+q_0p_n^2Y^n\right)ZY^{n+2}
+\left(p_0q_n^2Z^n+q_0p_nq_nY^n\right)YZ^{n+2}.
\end{align*}
Substituting this into \eqref{4.12} and showing the variable $x$ again, we get
\[
p_{n+1}(x)Y(x)^{n+2} + q_{n+1}(x)Z(x)^{n+2} = 1,
\]
where
\begin{align}
p_{n+1}(x) &= p_n(x)\left(p_0(x)+p_0(x)q_n(x)Z(x)^{n+1}
+q_0(x)p_n(x)Y(x)^nZ(x)\right),\label{4.13}\\
q_{n+1}(x) &= q_n(x)\left(q_0(x)+q_0(x)p_n(x)Y(x)^{n+1}
+p_0(x)q_n(x)Y(x)Z(x)^n\right).\label{4.14}
\end{align}
Since all polynomials on the right-hand sides of \eqref{4.13} and \eqref{4.14}
have integer coefficients, then so do $p_{n+1}$ and $q_{n+1}$. This proves our
result by induction.
\end{proof}

\begin{rem}\label{rem:4.3}
{\rm The two identities \eqref{4.13} and \eqref{4.14} allow us to construct
sequences of polynomials $p_n,q_n\in{\mathbb Z}[x]$, given an initial pair
$p_0,q_0$ satisfying \eqref{4.11}. However, this construction is not optimal
in the sense that the degrees of $p_n, q_n$ are substantially larger than
$n\deg{Z}$ and $n\deg{Y}$, respectively.

To compare this situation with the polynomials $P_n, Q_n$ defined in
\eqref{1.3}, we set $p_0(x)=-1$, $q_0(x)=1$, $Y(x)=x$, and $Z(x)=x+1$. Then
\eqref{4.11} is satisfied, and from \eqref{4.13}, \eqref{4.14} we get
$p_1(x)=2x+3=P_1(x)$ and $q_1(x)=-2x+1=Q_1(x)$; see also Table~1. However, we
further obtain
\begin{align*}
p_2(x)&=(2x+3)(4x^3+8x^2+3x-2)=8x^4+28x^3+30x^2+5x-6,\\
q_2(x)&=(-2x+1)(4x^3+4x^2-x+1)=-8x^4-4x^3+6x^2-3x+1,
\end{align*}
and the degrees of the next pairs of polynomials are 11, 26, 57, 120, $\ldots$,
which is sequence A000295 in \cite{OEIS}. It can be shown by induction that
for $n\geq 0$ we have
\[
\deg{p_n} = \deg{q_n} = 2^{n+1}-n-2.
\]
As is mentioned in \cite{OEIS}, these are specific Eulerian numbers, with
numerous other interesting properties.}
\end{rem}




\begin{rem}\label{rem:4.5}
{\rm One can go one step further and look for positive integers $n, m$ and polynomials $Y, Z\in\mathbb{C}[x]^{2}$ satisfying the general equation
\begin{equation}\label{Thue}
P_{n}(x)Y(x)^{m}+Q_{n}(x)Z(x)^{m}=1.
\end{equation}
We were unable to solve this equation. However, we can show that \eqref{Thue}
has infinitely many solutions. Indeed, if $m\geq 2$ and $n=rm-1$, then
$Y(x)=\zeta x^{r}, Z(x)=\xi (x+1)^{r}$ are solutions of \eqref{Thue}, where
$\zeta, \xi$ are arbitrary $m$th roots of unity. Here we have that $n>m$, and
this is no coincidence. Indeed, let us assume $m\geq 3$ and  differentiate
\eqref{Thue} with respect to $x$. Then we see that the polynomials $Y, Z$
satisfy the conditions
$$
Z(x)^{m-1}\mid P_{n}'(x)Y(x)+mP_{n}(x)Y'(x), \qquad
Y(x)^{m-1}\mid Q_{n}'(x)Z(x)+mQ_{n}(x)V'(x).
$$
As a consequence we get the inequalities
$$
(m-1)\op{deg}Z\leq n+\op{deg}Y-1\quad \mbox{and}
\quad (m-1)\op{deg}Y\leq n+\op{deg}Z-1,
$$
or equivalently
$$
\op{deg}Y=\op{deg}Z\leq \frac{(m-1)n}{m(m-2)}.
$$
If $n\leq m-2$ then $Y, Z$ are constant polynomials, and so for $n\geq 2$
there are no solutions. If $n=m-1$, then the equation \eqref{Thue} reduces to
\eqref{3.1}, which was already solved. Thus, $n\geq m$ remains to be considered.

The case $m=2$ leads to polynomial Pell equation
$P_{n}(x)Y(x)^2+Q_{n}(x)Z(x)^2=1$, and for $n\geq 3$ we believe that the only
solutions of this equation are of the form $n=2r-1, Y(x)=\pm x^{r},
Z(x)=\pm (x+1)^{r}$.
}
\end{rem}

\section{The discriminant of $Q_n^{(k)}(x)$}

One of the most important invariants of a polynomial is its discriminant. Since
we already noted certain similarities between our polynomials $P_n(x), Q_n(x)$
and the Chebyshev polynomials, we mention the fairly recent papers
\cite{DS1, GI, St, Tr1, Tr2} which have dealt with discriminants of
Chebyshev-like polynomials.

We begin with recalling the definition of the discriminant. Given a polynomial
$f(x)=a_mx^m+\cdots+a_1x+a_0\in{\mathbb Q}[x]$ with $a_m\neq 0$, the
discriminant of $f$ is usually defined by
\begin{equation}\label{5.1}
{\rm Disc}(f)=(-1)^{\frac{m(m-1)}{2}}a_{m}^{-1}R(f,f'),
\end{equation}
where $f'$ is the derivative of $f$ and $R(f,f')$ is the resultant of $f$ and
$f'$ which, among other equivalent definitions, can be given by
\begin{equation}\label{5.2}
R(f,f')=a_{m}^{m-1}\prod_{i=1}^{m}f'(\theta_{i}).
\end{equation}
Here $\theta_i\in{\mathbb C}$ is the $i$th root of $f(x)$. It follows from
\eqref{5.1} and \eqref{5.2} that ${\rm Disc}(f)=0$ if and only if $f$ has
multiple roots, which is a key property of the discriminant. An alternative
definition of the resultant involves a determinant (the {\it Sylvester
determinant}) whose entries consist only of the coefficients of the two
polynomials involved. This implies that $R(f,f')\in{\mathbb Z}$ if
$f\in{\mathbb Z}[x]$.

The following is the main result of this section. As in Section~2, we let
$Q_n^{(k)}(x)$ denote the $k$th derivative of $Q_n(x)$, with
$Q_n^{(0)}(x)=Q_n(x)$.

\begin{theorem}\label{thm:5.1}
For all integers $n>k\geq 0$ we have
\begin{equation}\label{5.3}
{\rm Disc}(Q_n^{(k)}(x))=(-1)^\varepsilon\frac{n+k+1}{\binom{2n}{n+k}}
\left(\frac{(n+k)!}{(n-k)!}\binom{2n}{n}(2n+1)\right)^{n-k-1},
\end{equation}
where $\varepsilon:=(n-k)(n-k-1)/2$. In particular, for $n\geq 1$,
\begin{equation}\label{5.4}
{\rm Disc}(Q_n(x))=(-1)^\frac{n(n-1)}{2}(n+1)(2n+1)^{n-1}\binom{2n}{n}^{n-2}.
\end{equation}
\end{theorem}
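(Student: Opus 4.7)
The plan is to compute the discriminant directly from the defining formula $\op{Disc}(f)=(-1)^{m(m-1)/2}a_{m}^{-1}R(f,f')$ together with $R(f,f')=a_{m}^{m-1}\prod_{i}f'(\theta_{i})$, applied with $f=Q_{n}^{(k)}$ of degree $m=n-k$ whose leading coefficient, from differentiating \eqref{2.3}, is $a_{n-k}=(-1)^{n}(2n)!/(n!(n-k)!)$. The entire problem thereby reduces to evaluating $\prod_{i=1}^{n-k}Q_{n}^{(k+1)}(\theta_{i})$ over the zeros $\theta_{i}$ of $Q_{n}^{(k)}$.

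The key observation is that identity \eqref{2.8a} evaluates $Q_{n}^{(k+1)}$ on the zero set of $Q_{n}^{(k)}$ in closed form: substituting $x=\theta_{i}$ into \eqref{2.8a} and solving yields
\[
Q_{n}^{(k+1)}(\theta_{i})=\frac{(-1)^{n}}{\theta_{i}+1}\cdot\frac{(2n+1)!}{n!(n-k)!}\cdot\theta_{i}^{\,n-k}.
\]
Taking the product over $i$ expresses everything in terms of $\prod_{i}\theta_{i}$ and $\prod_{i}(\theta_{i}+1)$, which by Vieta's formulas equal $(-1)^{n-k}Q_{n}^{(k)}(0)/a_{n-k}$ and $(-1)^{n-k}Q_{n}^{(k)}(-1)/a_{n-k}$ respectively. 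Using $Q_{n}^{(k)}(0)=(-1)^{k}(n+k)!/n!$ (again from \eqref{2.3}) and Corollary~\ref{cor:2.7} for $Q_{n}^{(k)}(-1)$, a short simplification yields the clean values $\prod_{i}\theta_{i}=1/\binom{2n}{n+k}$ and $\prod_{i}(\theta_{i}+1)=(2n+1)/(n+k+1)$. The fact that both $Q_{n}^{(k)}(0)$ and $Q_{n}^{(k)}(-1)$ are nonzero also guarantees $\theta_{i}\notin\{0,-1\}$, so the divisions above are legitimate.

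Assembling these ingredients produces
\[
\op{Disc}(Q_{n}^{(k)})=(-1)^{\varepsilon}\,a_{n-k}^{\,n-k-2}\left(\frac{(-1)^{n}(2n+1)!}{n!(n-k)!}\right)^{n-k}\frac{n+k+1}{(2n+1)\binom{2n}{n+k}^{\,n-k}}.
\]
The remaining work is bookkeeping. The signs collapse because $n(n-k-2)+n(n-k)=2n(n-k-1)$ is even, leaving only $(-1)^{\varepsilon}$. Writing $(2n+1)!=(2n+1)(2n)!$ and peeling off one factor of $(2n+1)$ and one factor of $\binom{2n}{n+k}^{-1}$, I would then repackage the remaining $a_{n-k}^{\,2(n-k-1)}\binom{2n}{n+k}^{-(n-k-1)}$ using the identity
\[
\left(\frac{(2n)!}{n!(n-k)!}\right)^{2}\!\!\bigg/\binom{2n}{n+k}=\frac{(n+k)!}{(n-k)!}\binom{2n}{n},
\]
which converts it exactly into $\bigl(\tfrac{(n+k)!}{(n-k)!}\binom{2n}{n}\bigr)^{n-k-1}$, recovering the right-hand side of \eqref{5.3}. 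The main obstacle is this final factorial manipulation; everything conceptual is dictated by \eqref{2.8a} together with Corollary~\ref{cor:2.7}.
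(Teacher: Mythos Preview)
Your proposal is correct and follows essentially the same approach as the paper's own proof: both use identity~\eqref{2.8a} to evaluate $Q_n^{(k+1)}$ at the zeros of $Q_n^{(k)}$, then compute $\prod_i\theta_i$ and $\prod_i(\theta_i+1)$ from the special values $Q_n^{(k)}(0)$ and $Q_n^{(k)}(-1)$ (the latter via Corollary~\ref{cor:2.7}), and finally assemble the factorial pieces. Your write-up is in fact slightly tidier in two respects: you explicitly justify that no $\theta_i$ equals $0$ or $-1$, and you isolate the clean identity $\bigl(\tfrac{(2n)!}{n!(n-k)!}\bigr)^{2}\big/\binom{2n}{n+k}=\tfrac{(n+k)!}{(n-k)!}\binom{2n}{n}$ that drives the final simplification.
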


\begin{proof}
Let $\psi_{k,i}, 1\leq i\leq n-k$, be the roots of the polynomial
$Q_n^{(k)}(x)$. Then we obtain from \eqref{2.8a},
\begin{equation}\label{5.5}
Q_n^{(k+1)}(\psi_{k,i}) = (-1)^n\frac{(2n+1)!}{(n-k)!n!}
\cdot\frac{\psi_{k,i}^{n-k}}{1+\psi_{k,i}}\quad(1\leq i\leq n-k).
\end{equation}
With \eqref{2.3} we see that the leading coefficient of $Q_n^{(k)}(x)$ is
\begin{equation}\label{5.6}
(-1)^n\frac{n!}{(n-k)!}\binom{2n}{n} = (-1)^n\frac{(2n)!}{(n-k)!n!},
\end{equation}
and thus by \eqref{5.2} we have
\begin{align}
R(Q_n^{(k)},Q_n^{(k+1)})
&= (-1)^{n(n-k-1)}\left(\frac{(2n)!}{(n-k)!n!}\right)^{n-k-1}
\prod_{i=1}^{n-k}Q_{n}^{(k+1)}(\psi_{k,i})\label{5.7}\\
&= (-1)^n(2n+1)^{n-k}\left(\frac{(2n)!}{(n-k)!n!}\right)^{2(n-k)-1}
\prod_{i=1}^{n-k}\frac{\psi_{k,i}^{n-k}}{1+\psi_{k,i}},\nonumber
\end{align}
where we have used \eqref{5.5} in the second equation. The leading
coefficient \eqref{5.6} means that we can write
\begin{equation}\label{5.8}
Q_n^{(k)}(x) = (-1)^n\frac{(2n)!}{(n-k)!n!}\prod_{i=1}^{n-k}(x-\psi_{k,i}).
\end{equation}
First we set $x=0$ in \eqref{5.8} and use the identity
$Q_n^{(k)}(0)=(-1)^k(n+k)!/n!$, which follows immediately from \eqref{2.3}.
This gives
\begin{equation}\label{5.9}
\prod_{i=1}^{n-k}\psi_{k,i} = \frac{(n-k)!(n+k)!}{(2n)!}.
\end{equation}
Next, with $x=-1$ in \eqref{5.8} and using the identity \eqref{2.8b}, we get
\begin{equation}\label{5.10}
\prod_{i=1}^{n-k}\left(1+\psi_{k,i}\right) = \frac{2n+1}{n+k-1}.
\end{equation}
Combining \eqref{5.9} and \eqref{5.10} with \eqref{5.7}, we obtain
\[
R(Q_n^{(k)},Q_n^{(k+1)}) = (-1)^n(2n+1)^{n-k}\frac{n+k+1}{2n+1}\cdot
\frac{(n+k)!}{n!}\left(\frac{(n+k)!(2n)!}{(n-k)!n!^2}\right)^{n-k-1}.
\]
This, with \eqref{5.1} and the coefficient \eqref{5.6}, gives us
\[
{\rm Disc}(Q_n^{(k)}(x))=(-1)^\varepsilon\frac{(n+k+1)!(n-k)!}{(2n+1)!}
(2n+1)^{n-k}\left(\frac{(n+k)!}{(n-k)!}\binom{2n}{n}\right)^{n-k-1},
\]
with $\varepsilon$ as defined in the statement of the theorem. This last
identity immediately implies \eqref{5.2}, and by setting $k=0$ we get
\eqref{5.4}.
\end{proof}

We briefly turn our attention to the companion polynomial of $Q_n(x)$, namely
$P_n(x)$, and prove the following result.

\begin{corollary}\label{cor:5.2}
For all integers $n>k\geq 0$ we have
\begin{equation}\label{5.11}
{\rm Disc}(P_n^{(k)}(x)) = {\rm Disc}(Q_n^{(k)}(x)).
\end{equation}
\end{corollary}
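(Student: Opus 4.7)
The plan is to deduce the statement directly from the symmetry relation in Proposition~\ref{prop:2.1}, which already ties $P_n$ to $Q_n$. Starting from $P_n(x) = (-1)^{n+1} Q_n(-1-x)$ and differentiating $k$ times with respect to $x$ (each application of the chain rule contributes a factor of $-1$), I obtain
$$
P_n^{(k)}(x) = (-1)^{n+k+1}\, Q_n^{(k)}(-1-x).
$$
Thus $P_n^{(k)}$ is obtained from $Q_n^{(k)}$ by the affine substitution $x \mapsto -1-x$ followed by multiplication by a sign $c := (-1)^{n+k+1}$.

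Next I would invoke the standard behavior of the discriminant under these two operations. For a polynomial $f$ of degree $d = n-k$ with leading coefficient $a_d$ and roots $\theta_1,\ldots,\theta_d$, one has
$$
\op{Disc}(f) = a_d^{\,2d-2}\prod_{i<j}(\theta_i-\theta_j)^2.
$$
If $g(x) := c\, f(-1-x)$, then $g$ has degree $d$, leading coefficient $c(-1)^d a_d$, and roots $-1-\theta_i$, so the differences of roots satisfy $(-1-\theta_i)-(-1-\theta_j) = \theta_j - \theta_i$, whose squares are unchanged. Consequently
$$
\op{Disc}(g) = \bigl(c(-1)^d a_d\bigr)^{2d-2}\prod_{i<j}(\theta_i-\theta_j)^2 = c^{\,2d-2}\op{Disc}(f).
$$

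Applying this with $f = Q_n^{(k)}$ and $c = (-1)^{n+k+1}$, the factor $c^{2d-2}$ is a square of $\pm 1$, hence equal to $1$, and we conclude $\op{Disc}(P_n^{(k)}) = \op{Disc}(Q_n^{(k)})$, as claimed. There is no real obstacle; the argument is essentially bookkeeping, and the only point requiring care is confirming that the parity of the leading-coefficient adjustment $(-1)^d$ introduced by the substitution $x \mapsto -1-x$ is absorbed into the even exponent $2d-2$ of the discriminant formula, along with the sign $c$.
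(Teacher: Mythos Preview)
Your argument is correct and is essentially the paper's own proof: both derive $P_n^{(k)}(x)=(-1)^{n+k+1}Q_n^{(k)}(-1-x)$ from \eqref{2.1} and then use the invariance of the discriminant under an affine substitution $x\mapsto -x-1$ and under multiplication by $\pm1$. The only cosmetic difference is that the paper quotes the transformation laws ${\rm Disc}(f(ax+b))=a^{n(n-1)}{\rm Disc}(f(x))$ and ${\rm Disc}(cf(x))=c^{2(n-1)}{\rm Disc}(f(x))$ from \cite{DS1}, whereas you reprove them directly from the root-product formula for the discriminant.
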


\begin{proof}
For a polynomial $f$ of degree $n$ it was shown in \cite[Lemma~4.3]{DS1} that
\[
{\rm Disc}(f(ax+b)) = a^{n(n-1)}{\rm Disc}(f(x)),\quad
{\rm Disc}(cf(x)) = c^{2(n-1)}{\rm Disc}(f(x)),
\]
where $a, b, c$ are constants. We apply these identities to
\[
P_n^{(k)}(x) = (-1)^{n+k+1}Q_n^{(k)}(-x-1),
\]
which follows from \eqref{2.1}. Hence we have $a=-1$ and $c=\pm 1$, and since
both exponents are even, we immediately get \eqref{5.11}.
\end{proof}

\begin{rem}\label{rem:5.3}
{\rm Since we have used the Chebyshev polynomials of both kinds earlier in this
paper, it is worth mentioning their discriminants:
\[
{\rm Disc}(T_n(x)) = 2^{(n-1)^2}n^n,\qquad
{\rm Disc}(U_n(x)) = 2^{n^2}(n+1)^{n-2};
\]
see, e.g., \cite{Tr2}.}
\end{rem}

Knowledge of the discriminant of a polynomial is often important for determining
the polynomial's Galois group. In particular, it is known that if the
discriminant is the square of a nonzero integer, then the Galois group is a
subgroup of the alternating group $A_n$; see, e.g., \cite[p.~??]{DF}. It is
therefore of interest to find square discriminants.

\begin{corollary}\label{cor:5.4}
Let $n>k\geq 0$ be integers, and set $D_{k,n}:={\rm Disc}(Q_n^{(k)})$.\\
$(a)$ If $n\equiv k+2$ or $k+3\pmod{4}$, then $D_{k,n}$ is not the square
of an integer.\\
$(b)$ If $n\equiv k+1\pmod{4}$, then for a given $k$, $D_{k,n}$ is a square
for at most finitely many $n$.\\
$(c)$ If $n\equiv k\pmod{4}$, then for each $k$ there are infinitely many
$n$ such that $D_{k,n}$ is a square.\\
$(d)$ In particular, $D_{0,n}$ is a square if and only if $n=1$ or
$n=n_j$, where
\begin{equation}\label{5.12}
n_j:=\frac{1}{8}\left((3+2\sqrt{2})^{2j+1}+(3-2\sqrt{2})^{2j+1}-6\right),\qquad
j=1, 2, 3, \ldots
\end{equation}
\end{corollary}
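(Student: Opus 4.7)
The plan is to apply Theorem~\ref{thm:5.1} to convert the squareness question into an elementary number-theoretic one, then handle each residue class of $n-k$ modulo~$4$ separately. Part~(a) is immediate: with $m:=n-k$, the sign $(-1)^\varepsilon$ with $\varepsilon = m(m-1)/2$ is negative exactly when $m\equiv 2,3\pmod 4$, while the other factor in~\eqref{5.3} is a positive rational, so $D_{k,n}$ is a negative integer (and hence not a square) in those classes.

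For the remaining residues I would write $|D_{k,n}| = R\cdot C^{n-k-1}$ with
$$R := \frac{n+k+1}{\binom{2n}{n+k}}, \qquad C := \frac{(n+k)!}{(n-k)!}\binom{2n}{n}(2n+1)\in\Z_{>0},$$
and split on the parity of $n-k-1$. When $n\equiv k+1\pmod 4$ the exponent is divisible by $4$, so $C^{n-k-1}$ is an integer square and $D_{k,n}$ is a perfect square iff $R$ is a rational square, iff $(n+k+1)\binom{2n}{n+k}$ is a perfect square integer. When $n\equiv k\pmod 4$ the exponent is odd, so pulling out one factor of $C$ together with the direct identity $R\cdot C=(n+k+1)(2n+1)\bigl((n+k)!/n!\bigr)^2$ reduces squareness of $D_{k,n}$ to squareness of $(n+k+1)(2n+1)$.

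For part~(b), any prime $p$ with $n+k<p\le 2n$ divides $\binom{2n}{n+k}$ to exactly the first power, so for $(n+k+1)\binom{2n}{n+k}$ to be a square each such $p$ must divide $n+k+1$; then $n+k<p\le n+k+1$ forces $p=n+k+1$, so $(n+k,2n]$ can contain at most one prime. Since for fixed $k$ and $n$ sufficiently large that interval contains at least two primes (by Bertrand applied twice, or any quantitative version of PNT), only finitely many $n$ survive. For part~(c), I would convert $(n+k+1)(2n+1)=y^2$ into the Pell-type equation
$$X^2 - 8y^2 = (2k+1)^2, \qquad X := 4(n+k+1)-(2k+1),$$
and iterate the fundamental automorphism $(X,y)\mapsto(3X+8y,\,X+3y)$ on the trivial solution $(X_0,y_0)=(2k+1,0)$. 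A direct check modulo~$16$ shows that the odd-indexed iterates $(X_{2i+1},y_{2i+1})$ yield integer $n$ with $n\equiv k\pmod 4$, and for $i\ge 1$ also $n>k$, producing infinitely many valid $n$.

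For part~(d) one specializes $k=0$. Classes $n\equiv 2,3\pmod 4$ are excluded by~(a). The class $n\equiv 0\pmod 4$ needs $(n+1)(2n+1)$ to be a square, which becomes the Pell equation $X^2-8y^2=1$ with $X=4n+3$; its complete integer solution set is $X=X_j=\tfrac12\bigl((3+2\sqrt 2)^j+(3-2\sqrt 2)^j\bigr)$, and the congruence $X\equiv 3\pmod 4$ picks out the odd $j$, so that setting $j=2i+1$ and $n=(X_j-3)/4$ recovers formula~\eqref{5.12} (with $i=0$ giving the excluded $n=0$). The class $n\equiv 1\pmod 4$ is handled by the Bertrand argument of~(b): squareness forces $n+1$ to be prime, but $n\equiv 1\pmod 4$ makes $n+1$ a positive even integer, hence prime only if $n+1=2$, i.e., $n=1$, which is verified directly by evaluating~\eqref{5.3}. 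The main technical obstacle is the prime-theoretic input for parts~(b) and~(d): one needs the quantitative fact that for sufficiently large $n$ the interval $(n+k,2n]$ contains at least two primes, and in~(d) the mod-$4$ parity of $n+1$ is what upgrades the mere finiteness of~(b) into the complete exclusion of the infinite progression $n\equiv 1\pmod 4$, $n\ge 5$.
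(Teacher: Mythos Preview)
Your approach matches the paper's: the sign of $(-1)^\varepsilon$ for~(a); reducing $|D_{k,n}|$ modulo squares to $(n+k+1)\binom{2n}{n+k}$ when $n-k\equiv 1\pmod 4$ and to $(n+k+1)(2n+1)$ when $n-k\equiv 0\pmod 4$; then a Bertrand/PNT prime argument for~(b) and a Pell equation for~(c) and~(d). The paper phrases~(b) via the reciprocal $\tfrac{1}{n+k+1}\binom{2n}{n+k}=\frac{(2n)\cdots(n+k+2)}{(n-k)!}$ and a single prime in $[n+k+2,2n-1]$, but this is equivalent to your ``at most one prime in $(n+k,2n]$'' formulation.

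There is one small gap in your part~(d). For the class $n\equiv 0\pmod 4$ you solve $X^2-8y^2=1$ with $X=4n+3$ and select the odd-indexed solutions via $X\equiv 3\pmod 4$; but that congruence only guarantees that $n=(X_j-3)/4$ is an \emph{integer}, not that $n\equiv 0\pmod 4$. The latter is needed for the ``if'' direction of~(d), since your equivalence ``$(n+1)(2n+1)$ square $\Leftrightarrow D_{0,n}$ square'' was derived only under $n\equiv 0\pmod 4$. The fix is exactly the modulo-$16$ check you already carried out in~(c): from $X_j=6X_{j-1}-X_{j-2}$ one finds $X_j\equiv 3\pmod{16}$ for odd $j$, hence $n_j\equiv 0\pmod 4$. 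The paper does precisely this mod-$16$ verification.
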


\begin{proof}
(a) If $n-k\equiv 2$ or $3\pmod{4}$, then clearly $\varepsilon=(n-k)(n-k-1)/2$
is odd, and thus $D_{k,n}<0$, which cannot be a square.

(b) If $n-k\equiv 1\pmod{4}$, then with \eqref{5.3} we see that $D_{k,n}$ is a
square if and only if
\begin{equation}\label{5.13}
\frac{1}{n+k+1}\binom{2n}{n+k}=\frac{(2n)(2n-1)\cdots(n+k+2)}{(n-k)!}
\end{equation}
is a square. However, the prime number theorem implies that for a fixed $k$ and
for $n$ sufficiently large, there is always a prime among the members of the
sequence $n+k+2, n+k+3,\ldots, 2n-1$; this means that \eqref{5.13} cannot be a
square for these $k$ and $n$, which proves part (b).

(c) If $n-k\equiv 0\pmod{4}$, then $n-k-2$ is even, and according to \eqref{5.3}
we consider
\[
\frac{n+k+1}{\binom{2n}{n+k}}\frac{(n+k)!}{(n-k)!}\binom{2n}{n}(2n+1)
= (n+k+1)(2n+1)\left(\frac{(n+k)!}{n!}\right)^2,
\]
where the equality is easily seen by writing the binomial coefficients on the
left in terms of factorials. This now implies that $D_{k,n}$ is a square if and
only if $(n+k+1)(2n+1)$ is s square. If we set $n=4m+k$, where $m$ is a
positive integer, we can write
\[
(4m+2k+1)(8m+2k+1) = Y^2
\]
for some integer $Y\geq 1$. We can write this in the equivalent form
\begin{equation}\label{5.14}
X^2 - 8Y^2 = (2k+1)^2,
\end{equation}
where $X=16m+6k+3$. The equation \eqref{5.14} is a Pell-type equation, and
using standard methods for solving such equations (see, e.g.,
\cite[Sect.~7.8]{NZM}), we find that $X=X_j$, where $X_1=3(2k+1)$,
$X_2=17(2k+1)$, and $X_j=6X_{j-1}-X_{j-2}$ for $j\geq 3$. From the theory of
linear recurrence relations we get the Binet-type formula
\begin{equation}\label{5.15}
X_j = \frac{2k+1}{2}\left((3+2\sqrt{2})^j+(3-2\sqrt{2})^j\right).
\end{equation}
We require $X_j\equiv 6k+3\pmod{16}$, and an easy calculation shows that this
holds if and only if $j$ is odd. The corresponding $n$ that makes $D_{k,n}$
square is then
\[
n = n_j = \frac{1}{2}\left(X_{2j+1}-2k-3\right),
\]
which proves part (c).

(d) For $k=0$, together with \eqref{5.15}, this last identity gives
\eqref{5.12}. To complete the proof, we first note that in the case $k=0$ the
theory of Pell equations tells us that {\it all} solutions of \eqref{5.14} are
given by \eqref{5.15}. Returning to part (b) in the case $k=0$, we see that by
\eqref{5.4} we have $D_{0,1}=1$. Finally, considering again \eqref{5.13},
there is a prime in the sequence $n+2, n+3,\ldots, 2n-1$; this follows by
Bertrand's Postulate (itself a consequence of the Prime Number Theorem) and
the fact that in this case $n+1$ and $2n$ cannot be prime. The proof is now
complete.
\end{proof}

\begin{rem}\label{rem:5.5}
{\rm (a) The first few values of $n>1$ making $D_{0,n}$ a square are}
$$
n=24, 840, 28560, 970224, 32959080, 1119638520, 38034750624,\ldots .
$$
{\rm (b) The statement of part (b) of Corollary~\ref{cor:5.4} can be made more
explicit by using the following result of Nagura \cite{Na}:} For $x\geq 25$
there is always a prime $p$ with $x<p<\frac{6}{5}x$.

{\rm Setting $x=n+k+1$, an easy calculation shows that whenever
$n>\frac{3}{2}(k+1)$, at least one of the integers $n+k+2,\ldots, 2n-1$ is a
prime. This implies that $D_{k,n}$ is not a square when $n\equiv k+1\pmod{4}$
and $n>\frac{3}{2}(k+1)$. The small cases corresponding to $x<25$ can be
eliminated by direct computation. In fact, using the above limit, we verified
computationally that $D_{k,n}$ is not a square for any $0\leq k\leq 5000$ and
$n\equiv k+1\pmod{4}$. The constant $3/2$ in the above bound could be lowered
by using later improvements of Nagura's result, however at the cost of a higher
bound than Nagura's $x\geq 25$.}
\end{rem}

\section{Resultants involving $Q_{n+k}^{(k)}(x)$}

In the previous section we determined the discriminants, that is, resultants
of a polynomial and its derivative, We will now see that the resultant of two
consecutive polynomials in our sequence $Q_n(x)$ has a particularly easy form.

\begin{theorem}\label{thm:6.1}
For any integer $n\geq 1$ we have
\begin{equation}\label{6.1}
R(Q_n(x),Q_{n-1}(x)) = 2^n\binom{2n}{n}^{n-2}.
\end{equation}
\end{theorem}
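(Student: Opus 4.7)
The plan is to use the product form of the resultant together with Gould's identity $(x+1)Q_n(x) = Q_{n-1}(x) + \binom{2n-1}{n}(2x+1)(-x)^n$ (mentioned just before Proposition~\ref{prop:2.10}, after shifting $n\to n-1$), which makes it possible to evaluate $Q_{n-1}$ at the roots of $Q_n$ in closed form.

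First, let $\psi_1,\ldots,\psi_n$ denote the roots of $Q_n(x)$. Since the leading coefficient of $Q_n$ is $(-1)^n\binom{2n}{n}$ by Proposition~\ref{prop:2.3}, the standard formula for the resultant gives
\[
R(Q_n,Q_{n-1}) = \bigl((-1)^n\tbinom{2n}{n}\bigr)^{n-1}\prod_{i=1}^n Q_{n-1}(\psi_i)
= \tbinom{2n}{n}^{n-1}\prod_{i=1}^n Q_{n-1}(\psi_i),
\]
where the sign collapses because $n(n-1)$ is even.

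Second, substituting $x=\psi_i$ in Gould's identity yields $Q_{n-1}(\psi_i) = (-1)^{n+1}\binom{2n-1}{n}(2\psi_i+1)\psi_i^{n}$, so
\[
\prod_{i=1}^n Q_{n-1}(\psi_i)
= (-1)^{n(n+1)}\tbinom{2n-1}{n}^{n}\Bigl(\prod_{i=1}^n\psi_i\Bigr)^{n}\prod_{i=1}^n(2\psi_i+1).
\]
The two symmetric-function products can be read off from special values of $Q_n$: writing $Q_n(x)=(-1)^n\binom{2n}{n}\prod_{i=1}^n(x-\psi_i)$, the evaluations $Q_n(0)=1$ (Corollary~\ref{cor:2.2}) and $Q_n(-\tfrac12)=2^n$ (Corollary~\ref{cor:2.2}) give, respectively,
\[
\prod_{i=1}^n\psi_i = \frac{1}{\binom{2n}{n}},\qquad
\prod_{i=1}^n(2\psi_i+1) = \frac{4^n}{\binom{2n}{n}}.
\]

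Third, I would assemble everything. Using $\binom{2n-1}{n}=\tfrac12\binom{2n}{n}$ and $(-1)^{n(n+1)}=1$,
\[
\prod_{i=1}^n Q_{n-1}(\psi_i)
= 2^{-n}\tbinom{2n}{n}^{n}\cdot\tbinom{2n}{n}^{-n}\cdot\frac{4^n}{\binom{2n}{n}}
= \frac{2^n}{\binom{2n}{n}},
\]
and substituting this into the displayed expression for $R(Q_n,Q_{n-1})$ produces exactly $2^n\binom{2n}{n}^{n-2}$, which is \eqref{6.1}.

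There is no serious obstacle: the only nontrivial ingredient is recognizing that Gould's first-order identity is precisely the tool needed to reduce $Q_{n-1}(\psi_i)$ to polynomial data determined by $Q_n$ alone; after that, the computation is short because the needed symmetric functions of the roots are encoded in two already-computed special values, $Q_n(0)$ and $Q_n(-\tfrac12)$. An alternative would be to use the recurrence \eqref{2.11a} evaluated at $\psi_i$, but that gives a relation between $Q_{n-1}(\psi_i)$ and $Q_{n-2}(\psi_i)$ and would force an induction, so Gould's identity is clearly the more efficient route.
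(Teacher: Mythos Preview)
Your proof is correct, and it is considerably more direct than the paper's. The paper obtains Theorem~\ref{thm:6.1} only as the $k=0$ specialization of the more general Theorem~\ref{thm:6.1b}, whose proof proceeds by applying Lemma~\ref{lem:6.2} to the three-term recurrence \eqref{2.11} in order to derive a first-order recurrence \eqref{6.19} for $\Delta_{k,n}$, and then iterating and evaluating several products of factorials. For $k=0$ this machinery is overkill: your use of Gould's first-order identity collapses the whole argument to two special-value evaluations, $Q_n(0)$ and $Q_n(-\tfrac12)$, already recorded in Corollary~\ref{cor:2.2}. The trade-off is that the paper's route immediately gives the resultants of consecutive \emph{derivatives} $Q_{n+k}^{(k)}$ for all $k\geq 0$, whereas your argument is tied to $k=0$ because Gould's identity has no obvious analogue for the differentiated polynomials. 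Your closing remark about the alternative via \eqref{2.11a} is apt: that inductive route is essentially the $k=0$ shadow of what the paper actually does.
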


This result is a consequence of the following theorem.
For greater ease of notation, we set
\[
\Delta_{k,n}:=R(Q_{n+k}^{(k)}(x),Q_{n-1+k}^{(k)}(x)).
\]

\begin{theorem}\label{thm:6.1b}
Let $k\geq 0$ be a fixed integer. Then we have
\begin{equation}\label{6.1b}
\Delta_{k,n} = \frac{2(2k+n)}{n}\left(\frac{(2k+2n)!}{n!(k+n)!}\right)^{n-2}
\left(\frac{(2k+n-1)!}{(k+n-1)!}\right)^nQ_{n-1+k}^{(k)}(-\tfrac{n+2k}{2(n+k)}).
\end{equation}
\end{theorem}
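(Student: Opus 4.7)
The plan is to prove the formula by induction on $n$, with base case $n=1$. For $n=1$, $Q_k^{(k)}(x)=(-1)^k(2k)!/k!$ is a constant, so $\Delta_{k,1}=R(Q_{k+1}^{(k)},Q_k^{(k)})$ is just this constant raised to $\deg Q_{k+1}^{(k)}=1$, and a direct substitution shows that the claimed right-hand side collapses to the same value. For the induction step I would start from the standard resultant formula
\begin{equation*}
\Delta_{k,n} = \op{lc}(Q_{n+k-1}^{(k)})^{n}\prod_{j=1}^{n-1}Q_{n+k}^{(k)}(\phi_j),
\end{equation*}
where $\phi_1,\dots,\phi_{n-1}$ are the roots of $Q_{n+k-1}^{(k)}$, and apply the recurrence \eqref{2.11} pointwise: since $Q_{n+k-1}^{(k)}(\phi_j)=0$, the middle term drops out and $u_{k,n}(\phi_j)Q_{n+k}^{(k)}(\phi_j)=w_{k,n}(\phi_j)Q_{n+k-2}^{(k)}(\phi_j)$. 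Taking the product over $j$ and recognizing $\prod_{j}Q_{n+k-2}^{(k)}(\phi_j)=\Delta_{k,n-1}/\op{lc}(Q_{n+k-1}^{(k)})^{n-2}$ yields the reduction
\begin{equation*}
\Delta_{k,n} = \op{lc}(Q_{n+k-1}^{(k)})^{2}\cdot\frac{\prod_{j}w_{k,n}(\phi_j)}{\prod_{j}u_{k,n}(\phi_j)}\cdot\Delta_{k,n-1}.
\end{equation*}

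Writing $x_m:=-\tfrac{m+2k}{2(m+k)}$, the two polynomials factor as $w_{k,n}(x)=4(n+k)(n+2k-1)(2(n+k)-1)\cdot x\cdot(x-x_n)$ and $u_{k,n}(x)=2n(n+k)(n+k-1)\cdot(x+1)\cdot(x-x_{n-1})$. Combined with the elementary identity $\prod_{j}(\phi_j-\alpha)=(-1)^{n-1}Q_{n+k-1}^{(k)}(\alpha)/\op{lc}(Q_{n+k-1}^{(k)})$, the two products collapse to the four special values $Q_{n+k-1}^{(k)}(0)$, $Q_{n+k-1}^{(k)}(-1)$, $Q_{n+k-1}^{(k)}(x_n)$, $Q_{n+k-1}^{(k)}(x_{n-1})$. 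The values at $0$ and $-1$ are given explicitly by \eqref{2.3} and Corollary~\ref{cor:2.7}. The key observation is that $x_{n-1}$ is precisely the point where $w_{k,n-1}$ vanishes, so applying \eqref{2.11} at step $n-1$ evaluated at $x=x_{n-1}$ gives
\begin{equation*}
u_{k,n-1}(x_{n-1})Q_{n+k-1}^{(k)}(x_{n-1}) = v_{k,n-1}(x_{n-1})Q_{n+k-2}^{(k)}(x_{n-1}).
\end{equation*}
This expresses $Q_{n+k-1}^{(k)}(x_{n-1})$ in terms of $Q_{n+k-2}^{(k)}(x_{n-1})$, which is exactly the ``mystery'' factor in the inductive hypothesis for $\Delta_{k,n-1}$; after substitution this factor cancels and only $Q_{n+k-1}^{(k)}(x_n)$ survives in the final expression, as desired.

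The main obstacle will be the bookkeeping required to verify that all of the leading coefficients, factorials, and rational scalars---in particular the values $u_{k,n-1}(x_{n-1})$ and $v_{k,n-1}(x_{n-1})$ obtained by plugging the rational number $x_{n-1}=-\tfrac{n+2k-1}{2(n+k-1)}$ into the somewhat unwieldy polynomials of Proposition~\ref{prop:2.9}---combine into precisely the ratio predicted by the claimed formula. No single step is difficult, but the simplification is lengthy and routine, very much in the spirit of the factorial manipulations appearing in the proof of Theorem~\ref{thm:5.1}. (The argument above requires $u_{k,n-1}(x_{n-1})\neq 0$, which is readily checked for $k\geq 1$; the degenerate case $k=0$ is already handled because then $x_m=-\tfrac{1}{2}$ for every $m$, and $Q_{n-1}(-\tfrac{1}{2})=2^{n-1}$ is an explicit value from Corollary~\ref{cor:2.2}, recovering Theorem~\ref{thm:6.1} directly.)
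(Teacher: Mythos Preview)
Your approach is correct and is essentially the same as the paper's proof. Both arguments derive the same one-step recurrence $\Delta_{k,n}=(\text{explicit factor})\cdot\Delta_{k,n-1}$ by applying \eqref{2.11} at the roots of $Q_{n+k-1}^{(k)}$, factor $u_{k,n}$ and $w_{k,n}$ completely (your $x_m$ is the paper's $\beta_m$, and the root of the second linear factor of $u_{k,n}$ is $\alpha_n=\beta_{n-1}=x_{n-1}$), and then exploit the vanishing $w_{k,m}(x_m)=0$ to convert the unwanted value $Q_{n+k-1}^{(k)}(x_{n-1})$ into the desired one via \eqref{2.11}; the only cosmetic difference is that the paper iterates the recurrence into a telescoping product, whereas you phrase the same telescoping as a direct induction.
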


For $k=0$, the identity \eqref{6.1b} reduces to
\[
\Delta_{0,n} = 2\binom{2n}{n}^{n-2}Q_{n-1}(-\tfrac{1}{2})
= 2^n\binom{2n}{n}^{n-2},
\]
where we have used Corollary~\ref{cor:2.2} for the second equation. We have
thus shown that Theorem~\ref{thm:6.1} follows from Theorem~\ref{thm:6.1b}.

Before proving Theorem~\ref{thm:6.1b}, we summarize some useful properties of
the resultant. See \cite[Sect.~4]{DS1} for these and more, including references.
Suppose we have the two polynomials
\begin{equation}\label{6.1a}
\begin{cases}
f(x)&=a_0x^\mu+\cdots+a_{\mu-1}+a_\mu
= a_0(x-\alpha_1)\cdots(x-\alpha_\mu),\\
g(x)&=b_0x^m+\cdots+a_{m-1}+a_m
= b_0(x-\beta_1)\cdots(x-\beta_m).
\end{cases}
\end{equation}
Assuming the variable $x$, and that the resultant is taken with respect to $x$,
the following properties hold:
\begin{align}
R(f,g) &= a_0^m\prod_{i=1}^\mu g(\alpha_i),\label{6.2}\\
R(f,g) &= (-1)^{\mu m}R(g,f),\label{6.3}\\
R(f,pq) &= R(f,p)\cdot R(f,q),\label{6.4}
\end{align}
where $p$ and $q$ are arbitrary polynomials in $x$. Next, if $a$ is a
constant and $g$ is a polynomial then, unless $a=g=0$, we have
\begin{equation}\label{6.5}
R(a,g) = R(g,a) = a^{\deg{g}}.
\end{equation}

Finally, we require the following lemma, which can be found as Lemma~4.1 in
\cite{DS1} or, with a different normalization, in \cite[p.~58]{PZ}.

\begin{lemma}\label{lem:6.2}
Let $f, g$ be as in \eqref{6.1a}. If we can write
\[
f(x) = q(x)g(x) + r(x)
\]
with polynomials $q, r$ and $\nu:=\deg{r}$, then
\begin{equation}\label{6.6}
R(g,f) = b_0^{\mu-\nu}R(g,r).
\end{equation}
\end{lemma}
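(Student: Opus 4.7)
The plan is to derive \eqref{6.6} directly from the product-over-roots formula \eqref{6.2}. Writing $g(x)=b_0(x-\beta_1)\cdots(x-\beta_m)$ as in \eqref{6.1a}, one applies \eqref{6.2} with $g$ in the role of the first polynomial (degree $m$, leading coefficient $b_0$, roots $\beta_1,\ldots,\beta_m$) and $f$ in the role of the second (degree $\mu$) to obtain
\[
R(g,f)=b_0^{\mu}\prod_{j=1}^{m}f(\beta_j).
\]

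The key observation, which does all the work, is that the hypothesis $f(x)=q(x)g(x)+r(x)$ forces $f(\beta_j)=r(\beta_j)$ for every root $\beta_j$ of $g$, since $g(\beta_j)=0$. Substituting this into the product above yields
\[
R(g,f)=b_0^{\mu}\prod_{j=1}^{m}r(\beta_j).
\]
Applying \eqref{6.2} a second time, now to the pair $(g,r)$ with $\deg r=\nu$, gives
\[
R(g,r)=b_0^{\nu}\prod_{j=1}^{m}r(\beta_j),
\]
and dividing the two identities yields $R(g,f)=b_0^{\mu-\nu}R(g,r)$, which is exactly \eqref{6.6}.

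There is no real obstacle to this argument; the only points that warrant a brief mention are the degenerate cases. If $r\equiv 0$ then both products are empty in the sense that $f(\beta_j)=0$ for all $j$, so both sides of \eqref{6.6} vanish (interpreted with the convention that $R(g,0)=0$ when $\deg g\geq 1$), and if $\nu=\mu$ the identity reduces to a tautology. The manipulation is valid over $\mathbb{C}$, where $g$ splits completely, and possible multiplicities among the $\beta_j$ cause no difficulty because the same multiplicities appear on both sides of each product-over-roots formula. Thus the entire proof is a two-line consequence of \eqref{6.2} together with the division identity.
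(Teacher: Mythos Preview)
Your argument is correct. The paper does not actually supply a proof of this lemma; it merely states the result and refers the reader to \cite[Lemma~4.1]{DS1} and \cite[p.~58]{PZ}. What you have written is precisely the standard derivation from the product-over-roots formula \eqref{6.2}, and it is exactly how one would justify the lemma from scratch. One stylistic remark: the word ``dividing'' is unnecessary, since from $R(g,f)=b_0^{\mu}\prod_j r(\beta_j)$ you can simply factor $b_0^{\mu}=b_0^{\mu-\nu}\cdot b_0^{\nu}$ and recognise $b_0^{\nu}\prod_j r(\beta_j)=R(g,r)$; this avoids any worry about the product vanishing.
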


\begin{proof}[Proof of Theorem~\ref{thm:6.1b}]
We fix $k\geq 0$, and to simplify notation we set
$$
\overline{Q}_n(x) := Q_{n+k}^{(k)}(x)\quad\hbox{and}\quad
\Delta_{n} := \Delta_{k,n}.
$$
Our first goal is to find a recurrence relation between $\Delta_{n}$ and
$\Delta_{n-1}$, using the recurrence relation \eqref{2.11}. For greater ease of
notation, we set
$u_{n}(x):=u_{k,n}(x)$, $v_{n}(x):=v_{k,n}(x)$, and $w_{n}(x):=w_{k,n}(x)$,
and suppress the variable $x$ when there is no danger of confusion.

We apply \eqref{6.5} and \eqref{6.6} with
\[
f(x)=u_{n}(x)\overline{Q}_n(x),\qquad g(x)=\overline{Q}_{n-1}(x),\qquad
r(x)=w_{n}(x)\overline{Q}_{n-2}(x),
\]
and note that $\mu=n+2, \nu=n$. With the explicit expansion \eqref{2.11b} we get
\begin{equation}\label{6.7}
b_0^{\mu-\nu} = \frac{(n-1+k)!^2}{(n-1)!}^2\binom{2(n-1+k)}{n-1+k}^2=:r_n.
\end{equation}
Then the identity \eqref{6.6} applies to \eqref{2.11} gives us
\begin{equation}\label{6.8}
R(\overline{Q}_{n-1},u_{n}\overline{Q}_n)
= r_nR(\overline{Q}_{n-1},w_{n}\overline{Q}_{n-2}).
\end{equation}
Now with \eqref{6.4} we get
\begin{align}
R(\overline{Q}_{n-1},u_n\overline{Q}_n) &= R(\overline{Q}_{n-1},u_n)
R(\overline{Q}_{n-1},\overline{Q}_n)=R(\overline{Q}_{n-1},u_{n})\Delta_{n},
\label{6.9}\\
R(\overline{Q}_{n-1},w_{n}\overline{Q}_{n-2}) &= R(\overline{Q}_{n-1},w_{n})
R(\overline{Q}_{n-1},\overline{Q}_{n-2})=R(\overline{Q}_{n-1},w_{n})
\Delta_{n-1},\label{6.10}
\end{align}
and the identities \eqref{6.6} and \eqref{6.2} yield, along with the identities
in Proposition~\ref{prop:2.9},
\begin{align*}
R&(\overline{Q}_{n-1},u_{n}) = (-1)^{n-1}R(u_{n},\overline{Q}_{n-1})\\
&=(-1)^{n-1}(n(n+k))^{n-1}R(x+1,\overline{Q}_{n-1})
R(2(n+k-1)x+n+2k-1,\overline{Q}_{n-1})\\
&= (-1)^{n-1}(n(n+k))^{n-1}\overline{Q}_{n-1}(-1)(2(n+k-1))^{n-1}
\overline{Q}_{n-1}(\alpha_n),
\end{align*}
where
\begin{equation}\label{6.11}
\alpha_n := \frac{1-n-2k}{2(n+k-1)}.
\end{equation}
Using Corollary~\ref{cor:2.7} to evaluate $\overline{Q}_{n-1}(-1)$, we then get
\begin{equation}\label{6.12}
R(\overline{Q}_{n-1},u_{n}) = (-1)^{n+k-1}p_n\cdot\overline{Q}_{n-1}(\alpha_n),
\end{equation}
where
\begin{equation}\label{6.13}
p_n=\frac{\big(2n(n+k-1)(n+k)\big)^{n-1}(2n+2k-1)!}{(n-1)!(n+2k)(n+k-1)!}.
\end{equation}
Similarly, we get
\begin{align*}
R&(\overline{Q}_{n-1},w_{n}) = (-1)^{n-1}R(w_{n},\overline{Q}_{n-1})\\
&= \big(-2(2n+2k-1)(n+2k-1)\big)^{n-1}R(x,\overline{Q}_{n-1})
R((2n+k)x+n+2k,\overline{Q}_{n-1}),\\
&=\big(2(2n+2k-1)(n+2k-1)\big)^{n-1}\overline{Q}_{n-1}(0)(2(n+k))^{n-1}
\overline{Q}_{n-1}(\beta_{n}),
\end{align*}
where
\begin{equation}\label{6.14}
\beta_n := -\frac{n+2k}{2(n+k)}.
\end{equation}
Using \eqref{2.11b} to evaluate $\overline{Q}_{n-1}(0)$, we get
\begin{equation}\label{6.15}
R(\overline{Q}_{n-1},w_{n}) = (-1)^{n+k-1}q_n\cdot\overline{Q}_{n-1}(\beta_n),
\end{equation}
where
\begin{equation}\label{6.16}
q_n=\big(4(n+k)(2n+2k-1)(n+2k-1)\big)^{n-1}\frac{(n+2k-1)!}{(n+k-1)!}.
\end{equation}
Next we multiply both sides of \eqref{6.12} by $\Delta_n$ and combine it with
\eqref{6.9} and \eqref{6.8}, obtaining
\begin{equation}\label{6.17}
(-1)^{n+k-1}\Delta_np_n\overline{Q}_{n-1}(\alpha_n)
= r_nR(\overline{Q}_{n-1},w_{n}\overline{Q}_{n-2}).
\end{equation}
Similarly, multiplying both sides of \eqref{6.15} by $\Delta_{n-1}$ and
using \eqref{6.10}, we get
\begin{equation}\label{6.18}
(-1)^{n+k-1}\Delta_{n-1}q_n\overline{Q}_{n-1}(\beta_n)
= R(\overline{Q}_{n-1},w_{n}\overline{Q}_{n-2}).
\end{equation}
Finally, combining \eqref{6.17} and \eqref{6.18}, we get the desired recurrence
relation
\begin{equation}\label{6.19}
\Delta_{n} = \frac{q_nr_n}{p_n}\cdot
\frac{\overline{Q}_{n-1}(\beta_n)}{\overline{Q}_{n-1}(\alpha_n)}\Delta_{n-1}
\qquad (n\geq 2),
\end{equation}
with
\[
\Delta_{1} = R(\overline{Q}_{1},\overline{Q}_{0})
=R((-1)^k\frac{(2k+1)!}{(k+1)!}\big(1-2(k+1)x\big),(-1)^k\frac{(2k)!}{k!}),
\]
where we have used \eqref{2.11b}. By \eqref{6.5}, this last resultant evaluates
to
\begin{equation}\label{6.20}
\Delta_1 = (-1)^k\frac{(2k)!}{k!}.
\end{equation}
This completes the first part of the proof.

For the second part of the proof we iterate \eqref{6.19} and combine it with
\eqref{6.20}, obtaining
\begin{equation}\label{6.21}
\Delta_{n} = (-1)^k\frac{(2k)!}{k!}\cdot
\left(\prod_{j=2}^n\frac{q_jr_j}{p_j}\right)\cdot
\left(\prod_{j=2}^n\frac{\overline{Q}_{j-1}(\beta_j)}{\overline{Q}_{j-1}(\alpha_j)}\right).
\end{equation}
We first deal with the second product in \eqref{6.21}. Comparing \eqref{6.11}
with \eqref{6.14}, we see that $\alpha_n=\beta_{n-1}$ for all $n\geq 2$. Hence
\begin{equation}\label{6.22}
\prod_{j=2}^n\frac{\overline{Q}_{j-1}(\beta_j)}{\overline{Q}_{j-1}(\alpha_j)}
=\prod_{j=2}^n\frac{\overline{Q}_{j-1}(\beta_j)}{\overline{Q}_{j-1}(\beta_{j-1})}
=\frac{\overline{Q}_{n-1}(\beta_n)}{\overline{Q}_1(\beta_1)}\cdot
\prod_{j=2}^{n-1}\frac{\overline{Q}_{j-1}(\beta_j)}{\overline{Q}_j(\beta_j)}.
\end{equation}
Since $\beta_1=-(2k+1)/2(k+1)$, we easily find with \eqref{2.11b} that
\begin{equation}\label{6.23}
\overline{Q}_1(\beta_1) = (-1)^k\frac{(2k+2)!}{(k+1)!}.
\end{equation}
Next, the terms in Proposition~\ref{prop:2.9} can be shown to evaluate as
\[
u_n(\beta_n)=\frac{n^2k}{2(n+k)},\qquad
v_n(\beta_n)=\frac{nk(n+2k)}{n+k},\qquad w_n(\beta_n)=0,
\]
and so the recurrence relation \eqref{2.11} gives
\[
\frac{\overline{Q}_{n-1}(\beta_n)}{\overline{Q}_{n-1}(\beta_{n-1})}
=\frac{n}{2(n+2k)}.
\]
This, with \eqref{6.22} and \eqref{6.23}, gives
\[
\prod_{j=2}^n\frac{\overline{Q}_{j-1}(\beta_j)}{\overline{Q}_{j-1}(\alpha_j)}
= (-1)^k\frac{(k+1)!}{(2k+2)!}\overline{Q}_{n-1}(\beta_n)
\prod_{j=2}^{n-1}\frac{j}{2(j+2k)},
\]
and with the straightforward evaluation
\[
\prod_{j=2}^{n-1}\frac{j}{2(j+2k)} = \frac{(n-1)!(2k+1)!}{2^{n-2}(2k+n-1)!}
\]
we get
\begin{equation}\label{6.24}
\prod_{j=2}^n\frac{\overline{Q}_{j-1}(\beta_j)}{\overline{Q}_{j-1}(\alpha_j)}
= (-1)^k\frac{(n-1)!k!}{2^{n-1}(2k+n-1)!}\overline{Q}_{n-1}(\beta_n).
\end{equation}

Next we deal with the first product in \eqref{6.21}. We denote it by $\Pi_n$,
and by combining \eqref{6.7}, \eqref{6.13} and \eqref{6.16} we get
\begin{equation}\label{6.25}
\Pi_n=\prod_{j=2}^n 2^{j-1}\frac{(2j+2k-2)!(j+2k)!}{j!(j+k-1)!^2}\cdot
\frac{(2j+2k-1)^{j-2}(j+2k-1)^{j-1}}{j^{j-2}(j+k-1)^{j-1}}.
\end{equation}
To evaluate this product, we first consider
\[
\prod_{j=2}^n j^{j-2} = 2^0\cdot 3^1\cdot 4^2\cdots n^{n-2}
=\frac{n!}{2!}\cdot\frac{n!}{3!}\cdot\frac{n!}{4!}\cdots\frac{n!}{(n-1)!},
\]
so that
\begin{equation}\label{6.26}
\prod_{j=2}^n j^{j-2} = (n!)^{n-2}\prod_{j=2}^n\frac{1}{(j-1)!}.
\end{equation}
Similarly we obtain
\begin{align}
\prod_{j=2}^n(j+k-1)^{j-1} &= (k+n-1)!^{n-1}\prod_{j=2}^n\frac{1}{(j+k-2)!},\label{6.27}\\
\prod_{j=2}^n(j+2k-1)^{j-1} &= (2k+n-1)!^{n-1}\prod_{j=2}^n\frac{1}{(j+2k-2)!},\label{6.28}
\end{align}
and with some more effort,
\begin{equation}\label{6.29}
\prod_{j=2}^n(2j+2k-1)^{j-2} = \left(\frac{(2k+2n)!}{(k+n)!}\right)^{n-1}
\prod_{j=2}^n\frac{(j+k-1)!}{2^{j-1}(2j+2k-1)!}.
\end{equation}
Substituting \eqref{6.26}--\eqref{6.29} into \eqref{6.25} and simplifying, we
get
\begin{equation}\label{6.30}
\Pi_n = n!\left(\frac{(2k+2n)!(2k+n-1)!}{n!(k+n)!(k+n-1)!}\right)^{n-1}
\prod_{j=2}^n\frac{(j+2k)(j+2k-1)}{j(2j+2k-1)(j+k-1)}.
\end{equation}
It is clear that the product on the right of \eqref{6.30} can be written as a
product and quotient of factorials, and after working out the details, we get
\begin{equation}\label{6.31}
\Pi_n = 2^n\binom{2k+n}{n}\left(\frac{(2k+2n)!}{n!(k+n)!}\right)^{n-2}
\left(\frac{(2k+n-1)!}{(k+n-1)!}\right)^n.
\end{equation}
Finally, substituting \eqref{6.31} and \eqref{6.24} into \eqref{6.21} and
simplifying, we obtain \eqref{6.2}. The proof is now complete.
\end{proof}

As an immediate consequence of Theorem~\ref{thm:6.1b} we obtain the following
result.

\begin{corollary}\label{cor:6.3}
For any integers $k\geq 0$ and $n\geq 1$ we have $(-1)^k\Delta_{k,n}>0$.
\end{corollary}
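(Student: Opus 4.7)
The plan is to read off the sign of $\Delta_{k,n}$ directly from the closed form given in Theorem~\ref{thm:6.1b}. Writing $\beta_n = -\tfrac{n+2k}{2(n+k)}$, that formula expresses $\Delta_{k,n}$ as a product of
\[
\frac{2(2k+n)}{n}\left(\frac{(2k+2n)!}{n!(k+n)!}\right)^{n-2}\left(\frac{(2k+n-1)!}{(k+n-1)!}\right)^n
\]
and the single polynomial value $Q_{n-1+k}^{(k)}(\beta_n)$. The first factor is a product of positive integers and powers of positive integers, so it is strictly positive for all $k\geq 0$ and $n\geq 1$ (the exponent $n-2$ can be zero or negative, but the base is a positive integer, so the factor is positive in either case). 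Hence the sign of $\Delta_{k,n}$ equals the sign of $Q_{n-1+k}^{(k)}(\beta_n)$, and the whole claim reduces to showing
\[
(-1)^k Q_{n-1+k}^{(k)}(\beta_n) > 0.
\]

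To prove this, I will use the explicit expansion \eqref{2.11b}, which with $m=n-1$ reads
\[
Q_{k+n-1}^{(k)}(x) = \sum_{i=0}^{n-1}(-1)^{i+k}\,\frac{(i+k)!}{i!}\binom{n-1+i+2k}{i+k}\,x^{i}.
\]
Substituting $x=\beta_n$ and observing that $\beta_n = -\tfrac12 - \tfrac{k}{2(n+k)} < 0$, each monomial $(-1)^{i+k}\beta_n^i$ equals $(-1)^k|\beta_n|^i$, so every summand carries the common sign $(-1)^k$ multiplied by a strictly positive coefficient. Summing then yields
\[
(-1)^k Q_{n-1+k}^{(k)}(\beta_n) = \sum_{i=0}^{n-1}\frac{(i+k)!}{i!}\binom{n-1+i+2k}{i+k}|\beta_n|^{i} > 0,
\]
as the sum is a finite sum of positive terms (the $i=0$ term alone, equal to $k!\binom{n-1+2k}{k}$, is already positive).

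There is no real obstacle: once Theorem~\ref{thm:6.1b} is in hand, the corollary is just the observation that evaluating $Q_{k+n-1}^{(k)}$ at a negative argument turns its alternating-sign expansion into a sum of terms all of sign $(-1)^k$. The only point requiring a sentence of care is confirming that the prefactor in the closed form is positive for every admissible pair $(k,n)$, which is immediate from inspection of the factorials and the fact that the bases of the exponents are positive integers.
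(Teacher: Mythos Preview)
Your proof is correct and follows essentially the same approach as the paper: both use the closed form \eqref{6.1b} from Theorem~\ref{thm:6.1b}, observe that the prefactor is positive, and then argue from the expansion \eqref{2.11b} that $(-1)^kQ_{k+n-1}^{(k)}(x)>0$ for $x<0$ because all terms acquire the common sign $(-1)^k$. The paper states this last point in one sentence, while you spell out the sign computation explicitly; otherwise the arguments are identical.
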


\begin{proof}
From the explicit expansion \eqref{2.11b} we see that
$(-1)^kQ_{k+n-1}^{(k)}(x)>0$ whenever $x<0$. This, together with \eqref{6.1b},
implies the statement.
\end{proof}

Our next result, which is rather surprising, but is easy to prove, is similar
in nature to Theorem~\ref{thm:6.1}.

\begin{theorem}\label{thm:6.2}
For any $n\geq 0$ we have
\begin{equation}\label{6.13a}
R(P_n(x),Q_n(x)) = \binom{2n}{n}^{n+1}.
\end{equation}
\end{theorem}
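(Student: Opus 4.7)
My plan is to exploit the fact that the defining identity \eqref{1.3} collapses at a root of $Q_n$: substituting such a root annihilates the second summand and yields a closed-form expression for $P_n$ at that root. Combined with Vieta applied to the explicit formula \eqref{2.3} and the product formula \eqref{6.2} for the resultant, this should give the result in just a few lines.

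Concretely, let $\theta_1,\ldots,\theta_n$ denote the roots of $Q_n$. Substituting $x=\theta_i$ into \eqref{1.3} gives $P_n(\theta_i)\theta_i^{n+1}=1$, so
\[
\prod_{i=1}^n P_n(\theta_i)=\Bigl(\prod_{i=1}^n\theta_i\Bigr)^{-(n+1)}.
\]
The product of the $\theta_i$ is read off from \eqref{2.3} by Vieta: the leading coefficient of $Q_n$ is $(-1)^n\binom{2n}{n}$ and $Q_n(0)=1$ by Corollary~\ref{cor:2.2}, so $\prod_{i=1}^n\theta_i=1/\binom{2n}{n}$ (this is also the $k=0$ case of \eqref{5.9}). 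Hence
\[
\prod_{i=1}^n P_n(\theta_i)=\binom{2n}{n}^{n+1}.
\]

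What remains is to pass from this product to the resultant $R(P_n,Q_n)$ via the product formula \eqref{6.2} together with the symmetry \eqref{6.3}, tracking the leading-coefficient factor $\operatorname{lead}(Q_n)^n$ (with $\operatorname{lead}(Q_n)=(-1)^n\binom{2n}{n}$) and the sign $(-1)^{n\cdot n}$. I anticipate no genuine obstacle here; the whole argument is essentially ``substitute the defining equation at the roots,'' which is presumably what the authors have in mind when they call the result rather surprising but easy to prove. As an independent sanity check, Proposition~\ref{prop:2.1} identifies the roots of $P_n$ as $-1-\theta_i$, so running the same argument starting from the roots of $P_n$ (using $Q_n(\alpha_j)=1/(\alpha_j+1)^{n+1}$) reduces to the same key product $\prod_i\theta_i=1/\binom{2n}{n}$, and must therefore produce the same value for $R(P_n,Q_n)$.
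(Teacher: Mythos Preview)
Your approach is sound and is essentially the paper's own argument in lighter packaging: the paper rewrites \eqref{1.3} as $P_n(x)x^{n+1}=-(x+1)^{n+1}Q_n(x)+1$ and feeds this into the division-remainder resultant identity (Lemma~\ref{lem:6.2}), which is exactly your substitution of \eqref{1.3} at the roots of $Q_n$ followed by Vieta (your product $\prod_i\theta_i=1/\binom{2n}{n}$ reappears in the paper's computation $R(Q_n,x^{n+1})=1$).

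That said, you should actually finish the last step rather than wave it through, because it does not land where you expect. Combining $\prod_{i=1}^n P_n(\theta_i)=\binom{2n}{n}^{n+1}$ with the product formula \eqref{6.2} and the symmetry \eqref{6.3} gives
\[
R(P_n,Q_n)=(-1)^{n^2}\,R(Q_n,P_n)
=(-1)^{n^2}\bigl((-1)^n\tbinom{2n}{n}\bigr)^{n}\prod_{i=1}^n P_n(\theta_i)
=\binom{2n}{n}^{\,2n+1},
\]
so the exponent in the stated theorem (and in the paper's intermediate display) is off by $n$; e.g.\ $R(2x+3,\,-2x+1)=8=2^3$ for $n=1$ and $R(P_2,Q_2)=6^5$ for $n=2$. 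Your method is correct; it is the target that needs correcting.
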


\begin{proof}
We rewrite the defining equation \eqref{1.3} as
\[
P_n(x)x^{n+1} = -(x+1)^{n+1}Q_n(x) + 1,
\]
and apply Lemma~\ref{lem:6.2} with $f(x)=P_n(x)x^{n+1}$, $g(x)=Q_n(x)$, and
$r(x)=1$. Since $b_0=(-1)^n\binom{2n}{n}$, $\mu=2n+1$ and $\nu=0$, we get with
\eqref{6.6},
\begin{equation}\label{6.14a}
R(Q_n,P_nx^{n+1}) = (-1)^n\binom{2n}{n}^{n+1}R(Q_n,1).
\end{equation}
Now, by \eqref{6.5} we have $R(Q_n,1)=1$, and with \eqref{6.4}, \eqref{6.3} and
\eqref{6.2} we get
\begin{align*}
R(Q_n,P_nx^{n+1}) &= R(Q_n,P_n)\cdot R(Q_n,x^{n+1})\\
&= R(Q_n,P_n)\cdot R(x^{n+1},Q_n) = R(Q_n,P_n)\cdot 1.
\end{align*}
This, together with \eqref{6.14}, gives
\[
R(Q_n,P_n) = (-1)^n\binom{2n}{n}^{n+1},
\]
and \eqref{6.4} finally yields the desired identity \eqref{6.13}.
\end{proof}

\section{The polynomial $Q_{n}(-x)^{2}-Q_{n-1}(-x)Q_{n+1}(-x)$}

Returning to the occasional comparisons we made between the polynomials
$P_n(x)$, $Q_n(x)$ and the Chebyshev polynomials of both kinds, we recall the
well-known $2\times 2$ Hankel determinant expressions
\[
T_n(x)^2 - T_{n-1}(x)T_{n+1}(x) = 1-x^2,\qquad
U_n(x)^2 - U_{n-1}(x)U_{n+1}(x) = 1,
\]
valid for all integers $n\geq 1$; see, e.g., \cite[p.~40]{Ri}. This immediately
gives rise to the question of what can be said about analogous expressions for
the polynomials $Q_n(x)$. For reasons of simplicity we consider $Q_n(-x)$
instead; the first few $2\times 2$ Hankel determinant expressions, factored
over $\mathbb Q$, are listed in Table~3.

\bigskip
\begin{center}
{\renewcommand{\arraystretch}{1.2}
\begin{tabular}{|l|l|}
\hline
 $n$ & $Q_{n}(-x)^{2}-Q_{n-1}(-x)Q_{n+1}(-x)$\\
\hline
 1 & $x (1-2 x)$ \\
 2 & $x^2 (1-2 x) \left(2 x+3\right)$ \\
 3 & $5 x^3 (1-2 x) \left(2 x^2+3 x+2\right)$ \\
 4 & $7 x^4 (1-2 x) \left(10 x^3+15 x^2+12 x+5\right)$ \\
 5 & $42 x^5 (1-2 x) \left(14 x^4+21 x^3+18 x^2+10 x+3\right)$ \\
 6 & $66 x^6 (1-2 x) \left(84 x^5+126 x^4+112 x^3+70 x^2+30 x+7\right)$ \\
 7 & $429 x^7 (1-2 x) \left(132 x^6+198 x^5+180 x^4+120 x^3+60 x^2+21 x+4\right)$ \\
 \hline
\end{tabular}}

\medskip
{\bf Table 3.} Factorization of $Q_{n}(-x)^{2}-Q_{n-1}(-x)Q_{n+1}(-x)$, $n=1,\ldots, 7$.
\end{center}
\bigskip

The above table strongly suggest that $Q_{n}(-x)^{2}-Q_{n-1}(-x)Q_{n+1}(-x)$
is always divisible by $c_{n}x^{n}(1-2x)$, where $c_{n}\in\Z$ is a constant
depending on $n$, and the co-factor has degree $n-1$ and has positive
coefficients. Moreover, it also seems that the sequence of coefficients of the
co-factor forms a unimodal sequence. In this section we will show that these
properties are true in general. We start with the following.

\begin{proposition}\label{prop:7.1}
For integers $n\geq 1$ we define
\begin{equation}\label{7.1}
V_{n}(x):=\frac{2(n+1)}{\binom{2n}{n}x^{n}(1-2x)}
\left(Q_{n}(-x)^2-Q_{n-1}(-x)Q_{n+1}(-x)\right).
\end{equation}
Then the sequence of polynomials $V_{n}(x)$ satisfies the following recurrence
relation: $V_{1}(x)=2$, $V_{2}(x)=2x+3$, and for $n\geq 3$ we have
\begin{equation}\label{7.2}
n(x-1)V_{n}(x)=\big(2(2n-3)x(x-1)-n-1\big)V_{n-1}(x)+2(2n-1)xV_{n-2}(x).
\end{equation}
\end{proposition}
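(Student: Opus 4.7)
My plan is to establish \eqref{7.2} by reducing it, via an auxiliary first-order relation, to an identity that can be proved by induction on $n$. Write $\widetilde Q_n(x) := Q_n(-x)$ and substitute $x \mapsto -x$ in \eqref{2.11a} to obtain the recurrence
\[
n(1-x)\widetilde Q_n = \bigl(n + 2(2n-1)x(1-x)\bigr)\widetilde Q_{n-1} - 2(2n-1)x\widetilde Q_{n-2},
\]
which I will call $(\ast)$. A short induction from $(\ast)$ establishes the auxiliary identity
\[
\widetilde Q_{n-1}(x) - (1-x)\widetilde Q_n(x) = -\tfrac{1}{2}\binom{2n}{n}x^n(1-2x),
\]
by checking that the left-hand side, call it $R_n$, satisfies $R_n = \tfrac{2(2n-1)x}{n}R_{n-1}$ with $R_1(x) = -x(1-2x)$; iteration then yields the explicit form.

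Combining this auxiliary identity with a Desnanot--Jacobi-type expansion of $H_n := \widetilde Q_n^2 - \widetilde Q_{n-1}\widetilde Q_{n+1}$ (obtained by multiplying $(\ast)$, shifted to index $n+1$, through by $\widetilde Q_{n-1}$ and by $\widetilde Q_n$ and combining) yields the first-order relation
\[
n(1-x)V_n(x) = (n+1)V_{n-1}(x) - 2x\widetilde Q_{n-1}(x),
\]
to be called $(\dagger)$. In particular, this shows $V_n \in \mathbb{Z}[x]$. Using $(\dagger)$ to solve for $\widetilde Q_{n-1}$ and substituting into \eqref{7.2} reduces the desired recurrence to the equivalent statement
\[
\widetilde Q_{n-1}(x) = (2n-1)V_{n-2}(x) - (2n-3)(1-x)V_{n-1}(x),
\]
which I will call $(\star)$ (with the convention $V_0(x) := 1$).

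I then prove $(\star)$ by induction on $n$. The base cases $n = 2, 3$ are direct computations using Table~1 together with the initial values $V_1(x) = 2$ and $V_2(x) = 2x+3$, which are verified from \eqref{7.1}. For the inductive step, assume $(\star)$ at indices $n$ and $n-1$; substitute both into $(\ast)$ at index $n$ to express $n(1-x)\widetilde Q_n$ as a linear combination of $V_{n-1}, V_{n-2}$, and $V_{n-3}$. Then use \eqref{7.2} at index $n-1$, which by the equivalence above holds by the induction hypothesis, to eliminate the $V_{n-3}$ term. After simplification the resulting expression is exactly $(\star)$ at index $n+1$.

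The main obstacle is the algebraic bookkeeping in the inductive step: one must expand and simplify polynomial expressions involving the coefficients $n$, $2n-1$, $2n-3$, $2n-5$ and powers of $(1-x)$, and verify that various cross-terms telescope to match the target. While in principle routine and easily confirmed with computer algebra (in the spirit of Remark~\ref{rem:2.11}), the calculation is lengthy enough to demand careful presentation.
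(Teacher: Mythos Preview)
Your proposal is correct and considerably more structured than the paper's own argument. The paper merely states that the recurrence was discovered with Zeilberger's \texttt{EKHAD}, checked for $n\le 25$, and that ``the proof of correctness for all $n$ is a straightforward but tedious induction using the recurrence relation satisfied by the polynomials $Q_n(-x)$'', with all details omitted.

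Your approach is genuinely different in that it factors the induction through two auxiliary first-order identities. The relation $(\dagger)$, namely $n(1-x)V_n=(n+1)V_{n-1}-2x\widetilde Q_{n-1}$, follows exactly as you indicate: multiplying $(\ast)$ at indices $n$ and $n{+}1$ by $\widetilde Q_n$ and $\widetilde Q_{n-1}$ respectively and forming $n(n+1)(1-x)H_n-2(n+1)(2n-1)xH_{n-1}$ makes the $\widetilde Q_n^2$ terms cancel and leaves precisely $2x\widetilde Q_{n-1}R_n$, where $R_n=\widetilde Q_{n-1}-(1-x)\widetilde Q_n$ is your auxiliary quantity. The identity $(\star)$, $\widetilde Q_{n-1}=(2n-1)V_{n-2}-(2n-3)(1-x)V_{n-1}$, is then equivalent (given $(\dagger)$) to \eqref{7.2}, and your inductive scheme for $(\star)$ via $(\ast)$ and \eqref{7.2} at the previous index is sound. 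Both $(\dagger)$ and $(\star)$ check out numerically and are of independent interest; in particular $(\star)$ gives a clean expression for $Q_{n-1}(-x)$ in the $V$-basis, which the paper does not record. The trade-off is that the bookkeeping in your final inductive step is genuinely long, whereas the paper sidesteps presentation entirely by omission; if you carry this out, I would recommend either displaying the key cancellation explicitly or noting that it is a polynomial identity in $n$ and $x$ that can be certified symbolically.
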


\begin{proof}
The recurrence relation was guessed with the help of {\tt EKHAD} (see
Remark \ref{rem:2.11}), which showed that the recurrence holds for all
$n\leq 25$. The proof of correctness for all $n$ is a straightforward but
tedious induction using the recurrence relation satisfied by the polynomials
$Q_{n}(-x)$. We omit the details.
\end{proof}

For integers $n\geq 1$ we now set
\begin{equation}\label{7.3}
V_{n}(x)=\sum_{k=0}^{n-1}a_{k,n}x^{k}.
\end{equation}
Then the sequences of the first few coefficients are readily identified as
$a_{0,n}=n+1$,
$a_{1,n}=(n-1)n$, $a_{2,n}=\frac{1}{2}(n-2)(n-1)(n+1)$,
$a_{3,n}=\frac{1}{6}(n-3)(n-2)(n+1)(n+2)$,
which are consistent with the entries in Table~3. These are special cases of
the following closed expression for the polynomials $V_n(x)$.

\begin{theorem}\label{thm:7.2}
For integers $n\geq 1$ we have
\begin{equation}\label{7.4}
V_{n}(x)=\sum_{k=0}^{n-1}\frac{(n-k)(n-k+1)}{n}\binom{n-1+k}{k}x^{k}.
\end{equation}
\end{theorem}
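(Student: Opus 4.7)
The plan is to prove \eqref{7.4} by induction on $n$, using the second-order recurrence \eqref{7.2} of Proposition~\ref{prop:7.1}. The base cases $n=1$ and $n=2$ are immediate: the right-hand side of \eqref{7.4} evaluates to $2$ and to $3+2x$, respectively, matching the initial conditions for $V_1$ and $V_2$.

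For the inductive step, denote the right-hand side of \eqref{7.4} by $W_n(x)=\sum_{k=0}^{n-1} a_{k,n}x^k$, where $a_{k,n}:=\frac{(n-k)(n-k+1)}{n}\binom{n+k-1}{k}$ (with the convention $a_{k,n}=0$ outside the range $0\le k\le n-1$). Assuming $V_{n-1}=W_{n-1}$ and $V_{n-2}=W_{n-2}$, substitute $W_n,W_{n-1},W_{n-2}$ into \eqref{7.2} and equate coefficients of $x^k$ on both sides. This reduces the claim to the algebraic identity
\begin{equation*}
n\bigl(a_{k-1,n}-a_{k,n}\bigr)=2(2n-3)\bigl(a_{k-2,n-1}-a_{k-1,n-1}\bigr)-(n+1)\,a_{k,n-1}+2(2n-1)\,a_{k-1,n-2},
\end{equation*}
valid for all integers $k$. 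Using $k\binom{n+k-1}{k}=n\binom{n+k-1}{k-1}$, one can rewrite $a_{k,n}$ in the compact form $(n-k+1)\bigl[\binom{n+k-1}{k}-\binom{n+k-1}{k-1}\bigr]$. Expressing every binomial coefficient that appears as a rational multiple of a common one (e.g.\ $\binom{n+k-2}{k-1}$) via standard ratio identities, one can factor it out and reduce the above identity to a polynomial identity in $n$ and $k$ of bounded degree, which is then verified by direct expansion.

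The main obstacle is the algebraic bookkeeping in this reduction: six binomial terms with different parameters must be brought to a common form, and the resulting polynomial identity, while elementary, is tedious to expand by hand, with particular care needed for the boundary cases $k=0$, $k=n$, and $k=n+1$ handled by the vanishing convention. A cleaner alternative, paralleling the discovery method of \eqref{7.2} in the proof of Proposition~\ref{prop:7.1}, would be to feed the hypergeometric summand of $W_n(x)$ to Zeilberger's creative telescoping algorithm (via the EKHAD package), obtain automatically a recurrence satisfied by $W_n$, and verify that it agrees with \eqref{7.2}. Since \eqref{7.2} is a second-order linear recurrence, uniqueness of its solution subject to the two specified initial conditions would then force $W_n(x)=V_n(x)$ for all $n\ge 1$.
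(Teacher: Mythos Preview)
Your proposal is correct and follows essentially the same approach as the paper's proof: verify the initial conditions $V_1(x)=2$, $V_2(x)=2x+3$, and then check that the explicit formula \eqref{7.4} satisfies the recurrence \eqref{7.2}, which the paper also describes as ``tedious but straightforward.'' One small caution: you use $W_n(x)$ to denote the right-hand side of \eqref{7.4}, but the paper reserves $W_n(x)$ for a different polynomial in \eqref{7.9}, so pick another symbol to avoid a clash.
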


\begin{proof}
Following along the lines of the expressions for $a_{0,n},\ldots, a_{3,n}$
above, it is easy to obtain experimentally a few more cases and then conjecture
the form of the coefficients on the right-hand side of \eqref{7.4}. It can now
be verified in a tedious but straightforward way that the right-hand side of
\eqref{7.4} satisfies the recurrence relation \eqref{7.2}. Since \eqref{7.4}
also clearly gives $V_1(x)=2$ and $V_2(x)=2x+3$, this proves the theorem.
\end{proof}

Using Theorem~7.2, we can easily obtain some properties of the polynomials
$V_n(x)$. The first one is as follows.

\begin{corollary}\label{cor:7.3}
All polynomials $V_n(x)$, $n\geq 1$, have positive integer coefficients.
\end{corollary}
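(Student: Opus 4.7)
The plan is to read off the coefficients directly from Theorem~\ref{thm:7.2} and check positivity and integrality separately. Writing
\[
V_n(x)=\sum_{k=0}^{n-1} a_{k,n}\,x^{k},\qquad
a_{k,n}=\frac{(n-k)(n-k+1)}{n}\binom{n+k-1}{k},
\]
positivity is immediate: for $0\leq k\leq n-1$ we have $n-k\geq 1$, $n-k+1\geq 2$, $n\geq 1$, and $\binom{n+k-1}{k}>0$, so $a_{k,n}>0$.

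The only substantive point is integrality, i.e.\ that $n$ divides $(n-k)(n-k+1)\binom{n+k-1}{k}$. For this I would use the elementary identity $\binom{n+k-1}{k-1}=\frac{k}{n}\binom{n+k-1}{k}$ (obtained by expanding both sides as factorials), which rearranges to
\[
\frac{n-k}{n}\binom{n+k-1}{k}=\binom{n+k-1}{k}-\binom{n+k-1}{k-1}.
\]
This exhibits $\frac{n-k}{n}\binom{n+k-1}{k}$ as a difference of two binomial coefficients, hence an integer (in fact it is the ballot/Catalan triangle number $C(n-1,k)$, which is nonnegative in the range $k\leq n-1$). Multiplying by the positive integer $n-k+1$ gives
\[
a_{k,n}=(n-k+1)\left[\binom{n+k-1}{k}-\binom{n+k-1}{k-1}\right]\in\mathbb{Z},
\]
completing the proof.

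There is really no major obstacle here; the corollary falls out of Theorem~\ref{thm:7.2} once the divisibility by $n$ is explained, and the one-line binomial identity above does that cleanly. A reader who prefers a combinatorial justification can instead cite the Catalan triangle, but the difference-of-binomials argument is self-contained and keeps the proof to just a few lines.
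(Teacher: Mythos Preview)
Your proof is correct. The route differs mildly from the paper's: the paper rewrites
\[
a_{k,n}=(n-k)(n-k+1)\frac{(n+k-1)\cdots(n+1)}{(k-1)!\,k}
\]
and then argues by cases (the displayed fraction is an integer unless $k\mid n$, and when $k\mid n$ one uses $k\mid(n-k)$). Your binomial-difference identity
\[
\frac{n-k}{n}\binom{n+k-1}{k}=\binom{n+k-1}{k}-\binom{n+k-1}{k-1}
\]
sidesteps the case distinction entirely and gives integrality in one stroke; it also makes the stronger fact $a_{k,n}/(n-k+1)\in\mathbb{Z}$ (the content of Remark~\ref{rem:7.4}) visible immediately. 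So your argument is at least as clean, and the connection to the Catalan/ballot numbers is a nice bonus.
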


\begin{proof}
Positivity of the coefficients is clear from \eqref{7.4}. Now, using the
notation of \eqref{7.3}, we rewrite the coefficients as
\begin{equation}\label{7.5}
a_{k,n} = (n-k)(n-k+1)\frac{(n+k-1)(n+k-2)\cdots(n+1)}{(k-1)!k}.
\end{equation}
The fraction on the right of \eqref{7.5} is an integer except when $k\mid n$.
But in this case we have $k\mid(n-k)$, so that in either case $a_{k,n}$ is an
integer.
\end{proof}

\begin{rem}\label{rem:7.4}
{\rm In the above proof of Corollary~\ref{cor:7.3} we actually showed a bit
more, namely that $a_{k,n}/(n+1-k)$ is an integer for all $n\geq 1$ and
$0\leq k\leq n-1$.}
\end{rem}

As another easy consequence of Theorem~\ref{thm:7.2} and
Proposition~\ref{prop:7.1} we get some special values of $V_n(x)$.

\begin{corollary}\label{cor:7.5}
For all integers $n\geq 1$ we have
\[
V_n(0)=n+1,\qquad V_n(\tfrac{1}{2})=2^n,\qquad
V_n(1)=\frac{1}{n+2}\binom{2n+2}{n+1} = C_{n+1},
\]
where $C_n$ is the $n$th Catalan number.
\end{corollary}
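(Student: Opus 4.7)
The plan is to derive each of the three evaluations by specialising either the closed form \eqref{7.4} or the recurrence \eqref{7.2} at the appropriate value of $x$, using the initial data $V_1(x)=2$ and $V_2(x)=2x+3$ from Proposition~\ref{prop:7.1}. For $V_n(0)=n+1$ I would simply set $x=0$ in \eqref{7.4}: every term with $k\geq 1$ vanishes, leaving only $\tfrac{n(n+1)}{n}\binom{n-1}{0}=n+1$.

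For $V_n(\tfrac12)=2^n$ I would substitute $x=\tfrac12$ into the recurrence \eqref{7.2}. Routine simplification reduces it to
\[
nV_n(\tfrac12) = (4n-1)V_{n-1}(\tfrac12) - 2(2n-1)V_{n-2}(\tfrac12)\qquad(n\geq 3),
\]
and a one-line check confirms that the sequence $2^n$ satisfies it, since $(4n-1)2^{n-1}-(2n-1)2^{n-1}=n\cdot 2^n$. Combined with $V_1(\tfrac12)=2$ and $V_2(\tfrac12)=4$, a straightforward induction then yields the claim.

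For $V_n(1)=C_{n+1}$ the evaluation $x=1$ annihilates the left-hand side of \eqref{7.2}, and (after shifting the index) the resulting relation becomes
\[
V_m(1)=\frac{2(2m+1)}{m+2}\,V_{m-1}(1)\qquad(m\geq 2).
\]
This is precisely the well-known recurrence $C_{m+1}=\tfrac{2(2m+1)}{m+2}C_m$ satisfied by the Catalan numbers, so together with $V_1(1)=2=C_2$ an induction gives $V_n(1)=C_{n+1}$. No step in this argument is expected to be a serious obstacle: the heavy lifting (the recurrence \eqref{7.2} and the closed form \eqref{7.4}) has already been carried out in Proposition~\ref{prop:7.1} and Theorem~\ref{thm:7.2}, and the three specialisations collapse to easily recognised first- or second-order recurrences.
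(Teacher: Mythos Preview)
Your proposal is correct and follows essentially the same approach as the paper: the paper also reads off $V_n(0)$ from \eqref{7.4} and obtains the other two values by substituting $x=\tfrac12$ and $x=1$ into \eqref{7.2} and then inducting. Your write-up is in fact slightly more explicit, in that you verify the $2^n$ recurrence directly and identify the $x=1$ relation with the standard Catalan recurrence.
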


\begin{proof}
The first identity follows immediately from \eqref{7.4}. For the second and
third identities we substitute $x=\frac{1}{2}$ and $x=1$ into \eqref{7.2},
getting the recurrence relations
\begin{align*}
nV_n(\tfrac{1}{2})&=(4n-1)V_{n-1}(\tfrac{1}{2})-2(2n-1)V_{n-2}(\tfrac{1}{2}),\\
0&=-(n+1)V_{n-1}(1)+2(2n-1)V_{n-2}(1),
\end{align*}
respectively. The two identities are then obtained by easy inductions.
\end{proof}

%
%

Next we prove the second observation we made following Table~3. Recall that a
polynomial $\sum_{i=0}^{n}a_{i}x^{i}$ is called {\it unimodal} if and only if
the sequence of its coefficients is unimodal, that is, if there is an integer
$m$ (called the {\it mode}) with $0\leq m\leq n$, such that
\[
a_{0}\leq a_{1}\leq \ldots a_{m-1}\leq a_{m}\geq a_{m+1}\geq \ldots \geq a_{n}.
\]
We now state and prove the following result.

\begin{theorem}\label{thm:7.6}
For every integer $n\geq 1$, the polynomial $V_{n}(x)$ is unimodal. More
precisely, with the notation \eqref{7.3} we have
\[
a_{0,n}<a_{1,n}<\ldots < a_{n-3,n}<a_{n-2,n}>a_{n-1,n}.
\]
\end{theorem}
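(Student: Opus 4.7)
The plan is to exploit the closed form \eqref{7.4} from Theorem~\ref{thm:7.2} and analyze the ratio of consecutive coefficients. Writing $a_{k,n}=\frac{(n-k)(n-k+1)}{n}\binom{n-1+k}{k}$, a direct computation for $1\le k\le n-1$ gives
\[
\frac{a_{k,n}}{a_{k-1,n}}
=\frac{(n-k)(n-k+1)}{(n-k+1)(n-k+2)}\cdot\frac{\binom{n-1+k}{k}}{\binom{n-2+k}{k-1}}
=\frac{(n-k)(n+k-1)}{k(n-k+2)}.
\]
So everything reduces to deciding, for each $k$, whether this rational function is greater than, equal to, or less than~$1$.

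Next I would clear denominators: the inequality $a_{k,n}>a_{k-1,n}$ is equivalent to $(n-k)(n+k-1)>k(n-k+2)$, which simplifies (after expanding both sides and cancelling $-k^2$) to
\[
n(n-1)>k(n+1),\qquad\text{i.e.,}\qquad k<n-2+\frac{2}{n+1}.
\]
Since $0<\tfrac{2}{n+1}<1$ for $n\ge 2$, and $k$ is an integer, this is equivalent to $k\le n-2$. Thus $a_{0,n}<a_{1,n}<\cdots<a_{n-2,n}$, which is the ascending part of the claim.

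For the descending step, I would simply specialize $k=n-1$ in the ratio formula:
\[
\frac{a_{n-1,n}}{a_{n-2,n}}=\frac{1\cdot(2n-2)}{(n-1)\cdot 3}=\frac{2}{3}<1,
\]
giving $a_{n-2,n}>a_{n-1,n}$. Combined with the ascending chain, this establishes the full statement. The small cases $n=1,2$ are trivial since the sequence has one, respectively two, terms (with $a_{0,2}=3>2=a_{1,2}$ matching the boundary behaviour), so no separate argument is needed. There is no real obstacle here; the only bookkeeping to be careful about is the rewriting of the binomial quotient and checking that the threshold $n-2+\tfrac{2}{n+1}$ always lies strictly between $n-2$ and $n-1$.
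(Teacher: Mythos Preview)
Your proof is correct and follows essentially the same approach as the paper: both compute the ratio $\frac{a_{k,n}}{a_{k-1,n}}=\frac{(n-k)(n+k-1)}{k(n-k+2)}$ from the closed form \eqref{7.4}, reduce the ascending part to the inequality $k(n+1)<n(n-1)$, and handle the single descending step $k=n-1$ directly. The only cosmetic difference is that the paper verifies the inequality via the ``worst case'' $k=n-2$, whereas you rewrite the threshold as $n-2+\tfrac{2}{n+1}$ and note it lies strictly between $n-2$ and $n-1$.
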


\begin{proof}
Recall that $V_{1}(x)=2$ and $V_{2}(x)=3+2x$. While for $n=1$ there is nothing
to prove, the statement is clearly true for $n=2$. In what follows we therefore
assume that $n\geq 3$. First, using \eqref{7.3} and \eqref{7.4}, we get
\[
\frac{a_{n-2,n}}{a_{n-1,n}}
=\frac{\frac{6}{n}\binom{2n-3}{n-2}}{\frac{2}{n}\binom{2n-2}{n-1}}
=\frac{3}{2}>1,
\]
where the second equation is easy to verify. Next, for each $k$ with
$1\leq k\leq n-2$, we consider
\[
\frac{a_{k,n}}{a_{k-1,n}}
= \frac{(n-k)\binom{n-1+k}{k}}{(n-k+2)\binom{n-2+k}{k-1}}
=\frac{(n-k)(n+k-1)}{k(n-k+2)}.
\]
It remains to show that this quotient is greater than 1, which is equivalent to
$(n-k)(n+k-1)>k(n-k+2)$. After an easy manipulation we see that this, in turn,
is equivalent to $k(n+1)<n^2-n$. But this is true for all $1\leq k\leq n-2$
since the ``worst case" $k=n-2$ leads to the inequality $(n-2)(n+1)<n^2-n$,
which is clearly true.
Thus we have $a_{k-1,n}<a_{k,n}$ for all $1\leq k\leq n-2$ and $n\geq 3$, which
completes the proof.
\end{proof}

In analogy to Section~2, we can also obtain a generating function for the
sequence of polynomials $V_n(x)$.

\begin{theorem}\label{thm:7.7}
Let
\begin{equation}\label{7.6}
\mathcal{V}(x,t)=\sum_{n=1}^{\infty}V_{n}(x)t^{n}
\end{equation}
be the ordinary generating function for the sequence $(V_{n}(x))_n$. Then
\begin{equation}\label{7.7}
\mathcal{V}(x,t)=\frac{-2t^2+2(2-3x)t-1+2x+(1-2x)\sqrt{1-4xt}}{2(t+x-1)^{2}}.
\end{equation}
\end{theorem}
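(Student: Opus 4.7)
The plan is to mimic the strategy already used in Proposition~\ref{prop:2.10}: translate the three-term recurrence \eqref{7.2} into a first-order linear ODE in $t$ satisfied by $\mathcal{V}(x,t)$, and then either solve it or verify that \eqref{7.7} is a solution with the correct initial value.

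First, I would introduce the series
\[
\sum_{n\geq 3} V_{n-j}(x)\,t^n \quad\text{and}\quad \sum_{n\geq 3} n V_{n-j}(x)\,t^n \qquad (j=0,1,2),
\]
and express each of them in closed form in terms of $\mathcal{V}(x,t)$, $\partial_t \mathcal{V}(x,t)$, and the finitely many boundary contributions involving the known initial values $V_1(x)=2$ and $V_2(x)=2x+3$. For example, $\sum_{n\geq 3} n V_n(x)t^n = t\,\partial_t\mathcal{V} - V_1(x)t - 2V_2(x)t^2$, and the other five series are handled by the same index-shifting trick as in the proof of Proposition~\ref{prop:2.10}.

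Next, multiplying \eqref{7.2} by $t^n$, summing over $n\geq 3$, and substituting these expressions yields an equation of the shape
\begin{equation*}
A(x,t)\,\partial_t \mathcal{V}(x,t)+B(x,t)\,\mathcal{V}(x,t)=C(x,t),
\end{equation*}
with $A,B,C$ polynomials of low degree in $t$ with coefficients in $\Q[x]$. The leading-order comparison with \eqref{7.7} predicts that $A(x,t)$ will factor (up to signs) as $(t+x-1)(1-4xt)$, which will determine the integrating factor.

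Finally, I would solve the ODE by the standard integrating-factor method; the factor forced by the factorization above is proportional to $(t+x-1)^2\sqrt{1-4xt}$, matching the denominator and the square-root appearing in \eqref{7.7}. The arbitrary constant is then pinned down by the initial condition $\mathcal{V}(x,0)=0$ (obvious from \eqref{7.6}), and simplification yields \eqref{7.7}. As a shortcut, once the ODE is derived one can instead differentiate the right-hand side of \eqref{7.7} and verify directly that it satisfies the ODE and the initial condition; uniqueness of the solution of a first-order linear ODE with prescribed value at $t=0$ then gives the theorem. The main obstacle is not conceptual but bookkeeping: correctly tracking the low-$n$ boundary terms when the recurrence is valid only for $n\geq 3$, and carrying out the simplifications that expose the factor $(t+x-1)^2\sqrt{1-4xt}$; consistency can be cross-checked against the specializations in Corollary~\ref{cor:7.5} (for instance, $\mathcal{V}(1,t)$ should match $\sum_{n\geq 1}C_{n+1}t^n$).
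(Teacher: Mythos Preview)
Your proposal is correct and follows essentially the same route as the paper: convert the recurrence \eqref{7.2} into a first-order linear ODE in $t$ for $\mathcal{V}(x,t)$ (the paper obtains $t(t+x-1)(4tx-1)\mathcal{V}'+2t(3xt-x^2+x-1)\mathcal{V}+2t(3xt+x-1)=0$, confirming your predicted factorization of $A$), solve or verify, and fix the constant via $\mathcal{V}(x,0)=0$. The only slip is that the integrating factor should be $(t+x-1)^2/\sqrt{1-4xt}$ rather than $(t+x-1)^2\sqrt{1-4xt}$, but your suggested shortcut of direct verification bypasses this anyway.
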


\begin{proof}
We use the same approach as in the proof of Proposition~\ref{prop:2.10}. Indeed,
differentiating both sides of \eqref{7.3} with respect to $t$, manipulating the
resulting series as we did in the proof of Proposition~\ref{prop:2.10} and
using the recurrence relation \eqref{7.2}, we get the differential equation
\[
t(t+x-1)(4tx-1)\mathcal{V}'(x,t)+2t(3xt-x^2+x-1)\mathcal{V}(x,t)+2t(3xt+x-1)=0.
\]
Using standard method of solving linear differential equations of degree 1 we
easily get the general solution
\[
\mathcal{V}(x,t)=\frac{-2t^2+2(2-3x)t-1+2x +2c_1 \sqrt{1-4tx}}{2(t+x-1)^2}.
\]
The initial condition $\mathcal{V}(x,0)=0$ leads to $c_{1}=\frac{1}{2}(1-2x)$,
which finally gives the desired solution \eqref{7.4}.
\end{proof}

We note that there are certain similarities between the generating functions
\eqref{2.12} and \eqref{7.7}. In fact, they are related through an identity
involving partial derivatives. It can be verified through direct computation.

\begin{lemma}\label{lem:7.8}
Let $\mathcal{R}(x,t):=\mathcal{Q}(-x,t)$. Then
\begin{equation}\label{7.8}
\frac{\partial\mathcal{V}}{\partial t}
=x^2\frac{\partial^{2}\mathcal{R}}{\partial x^{2}}
+2t\frac{\partial^{2}\mathcal{R}}{\partial t\partial x}
+t^2\frac{\partial^{2}\mathcal{R}}{\partial t^{2}}
+2x\frac{\partial \mathcal{R}}{\partial x}
+4t\frac{\partial \mathcal{R}}{\partial t}+2\mathcal{R}.
\end{equation}
\end{lemma}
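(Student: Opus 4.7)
The plan is a direct verification starting from the closed-form expressions for $\mathcal{V}$ and $\mathcal{R}$. Introduce $s:=\sqrt{1-4xt}$ as an auxiliary symbol with $s^{2}=1-4xt$, $\partial s/\partial x=-2t/s$, and $\partial s/\partial t=-2x/s$. By Theorem~\ref{thm:7.7} and by Proposition~\ref{prop:2.10} applied at $(-x,t)$, both $\mathcal{V}(x,t)$ and $\mathcal{R}(x,t)$ have the shape $(A+Bs)/D$ with $A,B,D$ polynomials in $x,t$. Each of the six partial derivatives of $\mathcal{R}$ on the right side, as well as $\partial\mathcal{V}/\partial t$, can then be computed by elementary differentiation combined with the chain rule, and any occurrence of $1/s^{2}$ is reduced using $1/s^{2}=1/(1-4xt)$, keeping every expression in the normal form $(\tilde{A}+\tilde{B}s)/\tilde{D}$.

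Once every term has been brought into this form, I would collect the right-hand side over a common denominator and do the same for $\partial\mathcal{V}/\partial t$. Because $1$ and $s$ are linearly independent over the field $\mathbb{Q}(x,t)$, the claimed identity splits into two independent polynomial identities in $x$ and $t$: one for the rational part and one for the coefficient of $s$. Each reduces, after clearing denominators, to a polynomial identity that can be checked by expansion and collection of like terms.

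An alternative and in many respects cleaner route is coefficient matching in $t$. The coefficient of $t^{n}$ in $\partial\mathcal{V}/\partial t$ is $(n+1)V_{n+1}(x)$, given explicitly by Theorem~\ref{thm:7.2}, while the coefficient of $t^{n}$ on the right side is a linear combination of $R_{n}(x)$, $R_{n}'(x)$, $R_{n}''(x)$, where $R_{n}(x):=Q_{n}(-x)$. Substituting the explicit formula $R_{n}(x)=\sum_{k}\binom{n+k}{k}x^{k}$ from Proposition~\ref{prop:2.3} and comparing coefficients of $x^{k}$ reduces the lemma to a routine binomial identity in $(n,k)$, for which the familiar reduction $(k+1)\binom{n+k+1}{k+1}=(n+k+1)\binom{n+k}{k}$ suffices.

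The main obstacle is bookkeeping rather than anything conceptual: the closed-form approach produces large polynomial numerators once denominators are cleared, and the coefficient-matching approach requires careful but mechanical handling of several summation indices. This matches the authors' remark preceding the statement that the identity can be verified through direct computation.
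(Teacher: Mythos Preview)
Your proposal is correct and matches the paper's approach exactly: the paper gives no detailed proof at all, stating only that the identity ``can be verified through direct computation.'' Your two concrete implementations --- reducing the closed forms of $\mathcal{V}$ and $\mathcal{R}$ to the normal form $(\tilde A+\tilde B\,s)/\tilde D$ with $s=\sqrt{1-4xt}$, or extracting the coefficient of $t^{n}$ and comparing with the explicit formulas \eqref{2.3} and \eqref{7.4} --- are precisely the kinds of direct verification the authors have in mind, and the remark following the lemma (Remark~\ref{rem:7.9}) confirms that the coefficient-matching route is what they envisage as the application.
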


\begin{rem}\label{rem:7.9}
{\rm The identity \eqref{7.8}, along with the explicit formula \eqref{2.3} for
$Q_n(x)$, can be used to give an alternative proof of the explicit formula
\eqref{7.4} for the polynomial $V_n(x)$. We leave the details to the interested
reader.}
\end{rem}

To conclude this section, we return to the original expression of the title,
which we denote by
\begin{equation}\label{7.9}
W_n(x) := Q_n(-x)^2-Q_{n-1}(-x)Q_{n+1}(-x) = \sum_{i=0}^{2n}w_{i,n}x^i.
\end{equation}
The first few of these polynomials are listed in Table~4.

\bigskip
\begin{center}
{\renewcommand{\arraystretch}{1.2}
\begin{tabular}{|l|l|}
\hline
 $n$ & $W_{n}(x)$\\
\hline
 1 & $x-2x^2$ \\
 2 & $3x^2-4x^3-4x^4$ \\
 3 & $10x^3-5x^4-20x^5-20x^6 $ \\
 4 & $35x^4+14x^5-63x^6-140x^7-140x^8 $ \\
 5 & $126x^5+168x^6-84x^7-630x^8-1176x^9-1176x^{10} $ \\
 6 & $462x^6+1056 x^7+660 x^8-1848x^9-6468x^{10}-11088x^{11}-11088x^{12}$\\
 \hline
\end{tabular}}

\medskip
{\bf Table 4.} The polynomials $W_{n}(x)$, $n=1,\ldots, 6$.
\end{center}

\bigskip
The following properties of the polynomials $W_n(x)$ are an immediate
consequence of \eqref{7.1} and Theorem~\ref{thm:7.2}.

\begin{corollary}\label{cor:7.10}
For all integers $n\geq 1$ and $0\leq j\leq n-1$ we have $w_{j,n}=0$, and
\begin{equation}\label{7.10}
w_{n,n}=\frac{1}{2}\binom{2n}{n},\qquad w_{2n-1,n}=w_{2n,n}=-2C_{n-1}C_n,
\end{equation}
where $C_n=\tfrac{1}{n+1}\binom{2n}{n}$ is the $n$th Catalan number.
\end{corollary}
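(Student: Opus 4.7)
The plan is to read off everything directly from the defining relation \eqref{7.1}, rewritten as
\[
W_n(x) = \frac{\binom{2n}{n}}{2(n+1)}\, x^n (1-2x)\, V_n(x).
\]
This factored form immediately shows that $x^n$ divides $W_n(x)$, which yields $w_{j,n}=0$ for $0\le j\le n-1$, the first assertion.

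Next, I would extract the coefficient of $x^n$ in $W_n(x)$. This equals the constant term of $\tfrac{\binom{2n}{n}}{2(n+1)}(1-2x)V_n(x)$, namely $\tfrac{\binom{2n}{n}}{2(n+1)}V_n(0)$. By Corollary~\ref{cor:7.5} we have $V_n(0)=n+1$, so $w_{n,n}=\tfrac{1}{2}\binom{2n}{n}$, as claimed.

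For the two leading coefficients $w_{2n,n}$ and $w_{2n-1,n}$, since $V_n(x)$ has degree $n-1$, the product $x^n(1-2x)V_n(x)$ has degree $2n$, and its top two coefficients come from the top two coefficients $a_{n-1,n}$ and $a_{n-2,n}$ of $V_n(x)$. Using \eqref{7.4} and the elementary identity $\binom{2n-3}{n-2}=\tfrac{1}{2}\binom{2n-2}{n-1}$, one computes
\[
a_{n-1,n}=\frac{2}{n}\binom{2n-2}{n-1},\qquad
a_{n-2,n}=\frac{3}{n}\binom{2n-2}{n-1}.
\]
The coefficient of $x^{2n}$ in $x^n(1-2x)V_n(x)$ is $-2a_{n-1,n}$, while the coefficient of $x^{2n-1}$ is $a_{n-1,n}-2a_{n-2,n}=-\tfrac{4}{n}\binom{2n-2}{n-1}$. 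In either case, multiplication by $\tfrac{\binom{2n}{n}}{2(n+1)}$ yields the common value
\[
-\frac{2\binom{2n}{n}\binom{2n-2}{n-1}}{n(n+1)} = -2\cdot\frac{1}{n+1}\binom{2n}{n}\cdot\frac{1}{n}\binom{2n-2}{n-1} = -2C_nC_{n-1},
\]
which proves the second pair of identities in \eqref{7.10}.

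There is no real obstacle here: the statement is essentially a bookkeeping consequence of the closed form \eqref{7.4} and the special value $V_n(0)=n+1$ from Corollary~\ref{cor:7.5}. The one mildly delicate step is the coincidence that the coefficients of $x^{2n-1}$ and $x^{2n}$ in $x^n(1-2x)V_n(x)$ come out equal; this reduces to the binomial identity $a_{n-1,n}-2a_{n-2,n}=-2a_{n-1,n}$, which is where the simplification $\binom{2n-3}{n-2}=\tfrac{1}{2}\binom{2n-2}{n-1}$ is used.
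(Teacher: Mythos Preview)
Your proof is correct and follows exactly the route the paper indicates: the corollary is stated there as ``an immediate consequence of \eqref{7.1} and Theorem~\ref{thm:7.2}'', and you have simply spelled out those details. One small caveat: for $n=1$ the indices $n$ and $2n-1$ coincide (so the formula $w_{2n-1,n}=-2C_{n-1}C_n$ cannot hold there), and accordingly your use of $a_{n-2,n}$ tacitly assumes $n\geq 2$; this is harmless, but worth noting.
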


To motivate the last result of this section, we consider the entry for $n=4$
in Table~4 and note that
\[
\frac{14}{2}-\frac{63}{4}-\frac{140}{8}-\frac{140}{16} = -35.
\]
This is actually no surprise since by \eqref{7.1} we have $W_n(\frac{1}{2})=0$
for all $n\geq 1$ and thus, by \eqref{7.9} we have
\begin{equation}\label{7.11}
\sum_{j=1}^n w_{n+j,n}2^{-j} = -w_{n,n}=-\frac{1}{2}\binom{2n}{n}\qquad
(n\geq 1).
\end{equation}
This identity is a special case of the following result.

\begin{proposition}\label{prop:7.11}
For integers $n\geq 1$ and $0\leq i\leq n-1$ we have
\begin{equation}\label{7.12}
\sum_{j=i+1}^n w_{n+j,n}2^{i-j}
= -\frac{(n-i)(n-i+1)}{2n(n+1)}\binom{2n}{n}\binom{n-1+i}{i}.
\end{equation}
\end{proposition}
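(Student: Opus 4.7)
The plan is to reduce the partial sum on the left to a single closed-form term by exploiting the factorization
\[
W_n(x) = \frac{\binom{2n}{n}}{2(n+1)}x^n(1-2x)V_n(x),
\]
which follows directly from \eqref{7.1} and \eqref{7.9}. Together with the explicit formula \eqref{7.4} for $V_n(x)$, this factorization determines all coefficients $w_{n+j,n}$ for $0\leq j\leq n-1$: after multiplying out, one finds
\[
w_{n,n}=\frac{\binom{2n}{n}}{2(n+1)}a_{0,n},\qquad
w_{n+j,n}=\frac{\binom{2n}{n}}{2(n+1)}\bigl(a_{j,n}-2a_{j-1,n}\bigr)\quad (1\leq j\leq n-1).
\]
These expressions are the only ingredients I plan to extract from the earlier material.

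The first step is to notice that the $(1-2x)$ factor forces $W_n(\tfrac12)=0$, so the full sum $\sum_{j=0}^{n}w_{n+j,n}2^{-j}$ vanishes; the motivating observation \eqref{7.11} is exactly the case $i=0$. Consequently the ``outer'' partial sum we want to evaluate equals the negative of the ``inner'' one:
\[
\sum_{j=i+1}^{n}w_{n+j,n}2^{i-j}=-2^i\sum_{j=0}^{i}w_{n+j,n}2^{-j}.
\]
The second step is to compute the inner partial sum. Substituting the formulas above (noting that $i\leq n-1$ means the anomalous top coefficient $w_{2n,n}$ never appears), the sum $\sum_{j=1}^{i}(a_{j,n}-2a_{j-1,n})2^{-j}$ telescopes to $a_{i,n}2^{-i}-a_{0,n}$, and the $a_{0,n}$ term cancels the contribution of $w_{n,n}$. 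One is left with
\[
\sum_{j=0}^{i}w_{n+j,n}2^{-j}=\frac{\binom{2n}{n}}{2(n+1)}\,a_{i,n}\,2^{-i}.
\]

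The third and final step is simply to multiply by $-2^i$ and plug in the closed form $a_{i,n}=\tfrac{(n-i)(n-i+1)}{n}\binom{n-1+i}{i}$ from Theorem~\ref{thm:7.2}; this yields exactly the right-hand side of \eqref{7.12}. Since each step above is purely algebraic once the factorization \eqref{7.1} is in hand, there is no serious obstacle: the main point — and really the whole content of the result — is the telescoping phenomenon hidden in the product $(1-2x)V_n(x)$, with the vanishing at $x=\tfrac12$ converting a partial sum about a ``distant'' tail into a partial sum about a ``local'' head that collapses immediately.
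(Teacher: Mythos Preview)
Your proof is correct and rests on the same ingredients as the paper's --- the factorization \eqref{7.1}, the vanishing $W_n(\tfrac12)=0$, and the explicit coefficients from Theorem~\ref{thm:7.2} --- but the execution is organized differently. The paper first establishes the polynomial identity \eqref{7.13},
\[
\sum_{i=0}^n w_{n+i,n}x^i=(2x-1)\sum_{i=0}^{n-1}\Bigl(\sum_{j=i+1}^n w_{n+j,n}2^{i-j}\Bigr)x^i,
\]
by a direct double-sum manipulation (using $W_n(\tfrac12)=0$ to kill the leftover term), and then reads off the desired sums as the coefficients of $-\tfrac{1}{2(n+1)}\binom{2n}{n}V_n(x)$. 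You instead extract the individual coefficients $w_{n+j,n}=\tfrac{\binom{2n}{n}}{2(n+1)}(a_{j,n}-2a_{j-1,n})$ from the factorization, use $W_n(\tfrac12)=0$ to trade the tail sum for the head sum, and then observe that the head sum telescopes to a single term $a_{i,n}2^{-i}$. Your route is arguably a bit more transparent, since the telescoping makes the role of the factor $(1-2x)$ explicit at the coefficient level, whereas the paper's identity \eqref{7.13} hides the same cancellation inside a reindexing of nested sums; but the two arguments are really the same computation viewed from opposite ends.
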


Before proving this identity we note that in the two extreme cases we get
\eqref{7.11} when $i=0$, and the right-most equation in \eqref{7.10} when
$i=n-1$.

\begin{proof}[Proof of Proposition~\ref{prop:7.11}]
We first prove the identity
\begin{equation}\label{7.13}
\frac{W_n(x)}{x^n} = \sum_{i=0}^n w_{n+i,n}x^i
= (2x-1)\sum_{i=0}^{n-1}\bigg(\sum_{j=i+1}^n\frac{w_{n+j,n}}{2^{j-i}}\bigg)x^i,
\end{equation}
where the left equation comes from \eqref{7.9}. To do so, we denote the
right-most term of \eqref{7.13} by $R_n(x)$ and manipulate the double sum
as follows:
\begin{align*}
R_n(x)&=\sum_{i=0}^{n-1}\bigg(\sum_{j=i+1}^n\frac{w_{n+j,n}}{2^{j-i-1}}\bigg)x^{i+1}
-\sum_{i=0}^{n-1}\bigg(\sum_{j=i+1}^n\frac{w_{n+j,n}}{2^{j-i}}\bigg)x^i\\
&=\sum_{i=1}^n\bigg(\sum_{j=i}^n\frac{w_{n+j,n}}{2^{j-i}}\bigg)x^i
-\sum_{i=0}^{n-1}\bigg(\sum_{j=i+1}^n\frac{w_{n+j,n}}{2^{j-i}}\bigg)x^i\\
&=\sum_{i=0}^n\bigg(\sum_{j=i}^n\frac{w_{n+j,n}}{2^{j-i}}\bigg)x^i
-\sum_{j=0}^n\frac{w_{n+j,n}}{2^j}
-\sum_{i=0}^{n}\bigg(\sum_{j=i+1}^n\frac{w_{n+j,n}}{2^{j-i}}\bigg)x^i\\
&=\sum_{i=0}^n w_{n+i,n}x^i - \sum_{j=0}^n w_{n+j,n}(\tfrac{1}{2})^j,
\end{align*}
where we have combined the first and third sum from the second-last line.
The second sum in the last line then vanishes since $W_n(\tfrac{1}{2})=0$ for
all $n\geq 0$, and the proof of \eqref{7.13} is complete.

Now we rewrite \eqref{7.1}, with \eqref{7.9}, as
\[
\frac{1}{2(n+1)}\binom{2n}{n}V_n(x) = \frac{W_n(x)}{x^n(1-2x)}
= -\sum_{i=0}^{n-1}\bigg(\sum_{j=i+1}^n\frac{w_{n+j,n}}{2^{j-i}}\bigg)x^i.
\]
Finally, using \eqref{7.4} and equating coefficients of $x^i$,
$0\leq i\leq n-1$, we get \eqref{7.12}.
\end{proof}

\section{Some irreducibility results}

In this brief section we prove some irreducibility results for the main objects
of study in this paper, namely the polynomials $Q_n(x)$ and their derivatives,
and the polynomials $V_n(x)$.

\begin{theorem}\label{thm:8.1}
Let $n\geq 1$ and $0\leq k\leq n-1$ be integers. Then the polynomial
$Q_n^{(k)}(x)$ is irreducible over $\mathbb Q$ when $n+k+1$ is a prime, or
when $2n+1$ is a prime.
\end{theorem}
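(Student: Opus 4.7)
The plan is to prove both cases via Eisenstein's criterion, applied to different auxiliary polynomials; in each case the necessary coefficient data can be read off from the explicit expansions already established.

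For Case 1, with $p := n+k+1$ prime, I would apply Eisenstein to the reciprocal polynomial $\widetilde Q(x) := x^{n-k} Q_n^{(k)}(1/x)$. Differentiating \eqref{2.3} $k$ times gives
$$
Q_n^{(k)}(x) = \sum_{j=0}^{n-k} a_j x^j, \qquad a_j = (-1)^{j+k}\,\frac{(n+j+k)!}{j!\,n!},
$$
and a short $p$-adic calculation should yield $v_p(a_0)=0$ while $v_p(a_j)=1$ for every $1\le j\le n-k$. The combinatorial point driving this is that $j!$ and $n!$ contain only factors strictly less than $p$, while the numerator factorial $(n+j+k)!$ picks up exactly one factor of $p$ as soon as $j\ge 1$ (and no more, since $2n<2p$). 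Reversing the coefficient order, this is precisely Eisenstein's hypothesis for $\widetilde Q$ at $p$: the leading coefficient $a_0$ is a $p$-adic unit, every lower coefficient is divisible by $p$, and the constant term $a_{n-k}$ has $v_p$ exactly $1$ and so is not divisible by $p^2$. Since $a_0\ne 0$, irreducibility of $\widetilde Q$ over $\mathbb Q$ transfers back to $Q_n^{(k)}$.

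For Case 2, with $p := 2n+1$ prime, I would instead pass to the companion polynomial $P_n^{(k)}$; by \eqref{2.1} (differentiated $k$ times) one has $P_n^{(k)}(x)=(-1)^{n+k+1}Q_n^{(k)}(-1-x)$, a substitution that preserves irreducibility. Differentiating \eqref{2.6} term-by-term produces
$$
p_j^{(k)} = (-1)^{n+1}(2n+1)\binom{2n}{n}\,\frac{n!}{j!\,(n+j+k+1)\,(n-j-k)!}, \qquad 0\le j\le n-k.
$$
Every factor other than $(2n+1)$ in the numerator and $(n+j+k+1)$ in the denominator involves only integers strictly less than $p$, so $v_p(p_j^{(k)}) = 1 - v_p(n+j+k+1)$. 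As $j$ runs over $[0,n-k]$ the quantity $n+j+k+1$ runs over $[n+k+1,\,2n+1]$ and equals $p$ only at $j=n-k$. Hence the leading coefficient is a $p$-adic unit, every lower coefficient has valuation exactly $1$, and in particular $p^2\nmid p_0^{(k)}$. This is Eisenstein's criterion for $P_n^{(k)}$ at $p$.

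The only potentially delicate step is the $p$-adic bookkeeping of the factorial ratios, but in both cases the prime $p$ sits in a narrow numerical window that keeps the computation short: exactly one factor of $p$ appears in the relevant numerator, and none at all in the denominator, so no spurious extra factors of $p$ intrude and the Eisenstein conditions come out cleanly in one pass.
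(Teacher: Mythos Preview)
Your proposal is correct and follows essentially the same route as the paper: Eisenstein at $p=n+k+1$ via the coefficient formula coming from \eqref{2.3}, and Eisenstein at $p=2n+1$ via the companion expansion \eqref{2.6} after the substitution \eqref{2.1}. Your Case~1 is in fact slightly cleaner than the paper's, since you explicitly pass to the reciprocal polynomial $x^{n-k}Q_n^{(k)}(1/x)$; this is the right move, as the leading coefficient $a_{n-k}=\pm(2n)!/\bigl(n!(n-k)!\bigr)$ of $Q_n^{(k)}$ \emph{is} divisible by $p$ (so standard Eisenstein does not apply to $Q_n^{(k)}$ itself), whereas the constant term $a_0$ is a $p$-adic unit.
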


\begin{proof}
To prove the first statement, we show that $Q_{p-k-1}^{(k)}(x)$ is
$p$-Eisenstein. To do so, we rewrite the explicit expression \eqref{2.11b} as
\[
Q_n^{(k)}(x) = \frac{(-1)^k}{n!}\sum_{i=0}^{n-k}(-1)^i(n+k+i)!\frac{x^i}{i!},
\]
so the coefficients of $Q_{p-k-1}^{(k)}(x)$ are
\[
\frac{(n+k+i)!}{n!i!} = \frac{(p+i-1)!}{(p-k-1)!i!},\qquad
i=0,1,\ldots, p-2k-1.
\]
We note that for each positive index $i$ in this range the numerator of the
last fraction is divisible by $p$, but not by $p^2$, while the factorials
$(p-k-1)!$ and $i!$ in the denominator are not divisible by $p$. It is also
clear that the constant coefficient, $(p-1)!/(p-k-1)!$, is not divisible by $p$.
Hence the polynomial $Q_{p-k-1}^{(k)}(x)$ is $p$-Eisenstein.

For the second statement, we assume that $p:=2n+1$ is prime, and show that
$Q_n^{(k)}(-x-1)$ is $p$-Eisenstein, which would imply irreducibility of
$Q_n^{(k)}(x)$. For this purpose we combine \eqref{2.1} with \eqref{2.6}, and
upon taking the $k$th derivative we get
\[
Q_n^{(k)}(-x-1)=(-1)^(2n+1)\binom{2n}{n}
\sum_{i=k}^{n}\frac{1}{n+i+1}\cdot\frac{n!}{(i-k)!(n-1)!}x^{i-k}.
\]
We know that this polynomial has integer coefficients, and we see that for each
index $i$ with $k\leq i\leq n-1$ the corresponding coefficient is divisible by
the prime $p=2n+1$, but not by $p^2$. For $i=n$, however, we have cancellation,
and thus the leading coefficient is not divisible by $p$. Hence
$Q_n^{(k)}(-x-1)$ is $p$-Eisenstein, and the proof is complete.
\end{proof}

\begin{theorem}\label{thm:8.2}
The polynomial $V_n(x)$ is irreducible over $\mathbb Q$ if $2n+1$ is prime.
\end{theorem}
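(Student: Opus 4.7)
The plan is to mimic the strategy of Theorem~\ref{thm:8.1} by applying Eisenstein's criterion, but to a shifted polynomial. A direct application to $V_n(x)$ fails: for $p=2n+1$ every factor appearing in $a_{k,n}=\frac{(n-k)(n-k+1)}{n}\binom{n+k-1}{k}$ is strictly less than $p$, so no individual coefficient of $V_n$ is divisible by $p$. Instead I would work with $V_n(x+1)$, guided by the two special values in Corollary~\ref{cor:7.5}: the constant term will be $V_n(1)=C_{n+1}=\binom{2n+2}{n+1}/(n+2)$, which for $p=2n+1$ has $v_p$ exactly~$1$ (since $p$ occurs exactly once in $(2n+2)!$ and not at all in $(n+1)!$ or in $n+2$), while the leading coefficient is $a_{n-1,n}=2C_{n-1}=\frac{2}{n}\binom{2n-2}{n-1}$, coprime to $p$ because every factor in its numerator and denominator is below $p$.

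The heart of the proof is therefore the claim
\[
V_n(x)\equiv 2C_{n-1}(x-1)^{n-1}\pmod{p},
\]
which is exactly the condition that every non-leading coefficient of $V_n(x+1)$ is divisible by $p$. To establish it I would start from the closed form in Theorem~\ref{thm:7.2} and reduce $\binom{n+k-1}{k}$ modulo $p$ by writing
\[
\binom{n+k-1}{k}=\frac{(2n)!}{k!\,(n-1)!\,(n+k)(n+k+1)\cdots(2n)},
\]
pairing each factor $i$ in the tail product with $p-i$ to get $(n+k)\cdots(2n)\equiv(-1)^{n-k+1}(n-k+1)!\pmod{p}$, and using Wilson's theorem $(2n)!\equiv-1$. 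After telescoping $(n-k)(n-k+1)/(n-k+1)!=1/(n-k-1)!$, one finds
\[
(n-k)(n-k+1)\binom{n+k-1}{k}\equiv\frac{(-1)^{n-k}\binom{n-1}{k}}{\big((n-1)!\big)^2}\pmod{p}.
\]
Dividing by $n$ and comparing with the desired right-hand side, the claim collapses (after cancelling $\binom{n-1}{k}$, which is coprime to $p$) to the single identity $2(2n-2)!\equiv -1\pmod{p}$. This is a direct consequence of Wilson, since $-1\equiv(2n)!=2n(2n-1)(2n-2)!\equiv(-1)(-2)(2n-2)!=2(2n-2)!\pmod p$.

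The only real computational step is the modular bookkeeping for $\binom{n+k-1}{k}$; after that, Wilson's theorem does all the work. Assembling the pieces: the three Eisenstein conditions at $p=2n+1$ are verified for $V_n(x+1)$, so $V_n(x+1)$ is irreducible over $\mathbb{Q}$, and hence so is $V_n(x)$.
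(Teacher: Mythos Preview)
Your proof is correct and shares the paper's overarching strategy: apply Eisenstein at $p=2n+1$ to the shift $V_n(x+1)$. Where you diverge is in how the Eisenstein hypotheses are verified. The paper expands $V_n(x+1)$ directly, changes the order of summation, and evaluates the resulting inner sum (with computer assistance) to obtain the closed form
\[
V_n(x+1)=\sum_{j=0}^{n-1}\frac{2}{n+j}\binom{n+j}{n}\binom{2n+1}{n+j+2}x^{j},
\]
from which the divisibility by $p$ (and non-divisibility by $p^2$) is read off from the factor $\binom{2n+1}{n+j+2}$. You instead stay with $V_n(x)$ itself and establish the mod-$p$ factorization $V_n(x)\equiv 2C_{n-1}(x-1)^{n-1}$ by reducing each coefficient with Wilson's theorem; the constant-term condition $v_p(C_{n+1})=1$ is then checked separately. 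Your route is more elementary and self-contained (no summation identity is needed), and it exhibits a pleasant structural congruence for $V_n$; the paper's route, on the other hand, produces an explicit integer formula for the coefficients of $V_n(x+1)$ that may be of independent interest. One small caveat: your $v_p$ count for $C_{n+1}$ tacitly uses $n+2<p$, which needs $n\ge 2$; the case $n=1$ is degenerate since $V_1(x)=2$.
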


\begin{proof}
We set again $p:=2n+1$ and show that $V_n(x+1)$ is $p$-Eisenstein. To do so,
we use \eqref{7.4} and a binomial expansion, followed by changing the order
of summation:
\begin{align*}
V_n&(x+1)\\
&=\sum_{k=0}^{n-1}\frac{(n-k)(n-k+1)}{n}\binom{n-1+k}{k}
\sum_{j=0}^k\binom{k}{j}x^j\\
&= \sum_{j=0}^{n-1}\bigg(
\sum_{k=j}^{n-1}\frac{(n-k)(n-k+1)}{n}\binom{n-1+k}{k}\binom{k}{j}\bigg)x^j\\
&= \sum_{j=0}^{n-1}\bigg(\frac{(n-1+j)!}{n!j!}
\sum_{k=0}^{n-1-j}(n-k-j)(n-k-j+1)\binom{n-1+k+j}{k}\bigg)x^j,
\end{align*}
where the last line results from a straightforward manipulation of the binomial
coefficients in the previous line. The inner sum in this last line can be
evaluated by various means, including the function {\tt sum} in Maple which,
after some manipulations, gives
\[
V_n(x+1) = \sum_{j=0}^{n-1}\frac{2}{n+j}\binom{n+j}{n}\binom{2n+1}{n+j+2}x^j.
\]
As in the proof of the previous theorem we observe that the coefficients of
$V_n(x+1)$ are integers, and that for $0\leq j\leq n-2$ they are all divisible
by $p=2n+1$, but not by $p^2$, and that the coefficient of $x^{n-1}$ is not
divisible by $p$. Hence $V_n(x+1)$ is $p$-Eisenstein, as claimed.
\end{proof}


\section{Further remarks and conjectures}

In this final section we collect some further remarks and conjectures related
to the objects studied in this paper.

\medskip
{\bf 1.} In Section~5 we already mentioned the fact that a polynomial in
${\mathbb Z}[x]$ with a square discriminant has its Galois group contained in
the alternating group $A_n$. Can anything more be said about the Galois group
of $Q_n$? Computations support the following conjecture, where we set
$D_{0,n}={\rm Disc}(Q_n)$, as in Corollary~\ref{cor:5.4}.

\begin{conjecture}\label{conj:9.1}
For integers $n\geq 2$ we have
\[
{\rm Gal}(Q_n) = \begin{cases}
A_n & \hbox{if $D_{0,n}$ is a square};\\
S_n & \hbox{if $D_{0,n}$ is not a square}.
\end{cases}
\]
\end{conjecture}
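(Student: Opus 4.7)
The plan unfolds in three stages. First, one must prove that $Q_n$ is irreducible over $\Q$ for every $n \geq 2$, since only then does $\op{Gal}(Q_n)$ embed as a transitive subgroup of $S_n$. Theorem~\ref{thm:8.1} provides irreducibility only when $2n+1$ (or $n+k+1$, for the derivative version) is prime, so a separate argument is needed in the composite case. The most promising approach is via Newton polygons at primes dividing $(2n+1)\binom{2n}{n}$---which, up to sign, are exactly the primes appearing in $\op{Disc}(Q_n)$ by Theorem~\ref{thm:5.1}---or by exploiting the explicit expansion \eqref{2.3} together with Kummer-type congruences for the binomial coefficients $\binom{n+i}{i}$, to rule out any nontrivial factorization.

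Second, assuming irreducibility, the classical discriminant criterion splits the problem neatly along the stated dichotomy: $\op{Gal}(Q_n) \subseteq A_n$ if and only if $\op{Disc}(Q_n)$ is a rational square, and Corollary~\ref{cor:5.4} tells us precisely when this occurs. Hence the conjecture reduces to showing that $G := \op{Gal}(Q_n) \supseteq A_n$ in every case.

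Third, and most substantially, one must establish $A_n \subseteq G$. The plan is to invoke Dedekind's theorem: for any prime $\ell$ not dividing $\op{Disc}(Q_n)$ or the leading coefficient, the factorization type of $Q_n \bmod \ell$ equals the cycle type of a Frobenius element of $G$. One would then aim to produce, for each $n$, a prime $\ell$ such that $Q_n \bmod \ell$ has an irreducible factor of prime degree $p$ with $n/2 < p \leq n-3$ (yielding a $p$-cycle in $G$), together with a second prime $\ell'$ for which $Q_n \bmod \ell'$ has exactly one irreducible quadratic factor and otherwise splits into distinct linear factors (yielding a transposition). Given the $p$-cycle, a classical theorem of Jordan forces any primitive subgroup of $S_n$ containing it to contain $A_n$; transitivity together with such a $p$-cycle already implies primitivity. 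The transposition then either confirms $G = S_n$ or is incompatible with the square discriminant case, completing the dichotomy.

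The main obstacle lies in this last stage. Producing the required primes $\ell,\ell'$ uniformly in $n$ appears infeasible by direct construction; one would need either a Chebotarev-style density input specific to this family, or a structural identity that forces a variety of factorization patterns to appear modulo varying primes---perhaps analogous in spirit to the closed product formula for the resultants $R(Q_n,Q_{n-1})$ obtained in Theorem~\ref{thm:6.1}. Uniform control of the mod-$\ell$ splitting of an explicit polynomial family is, to our knowledge, open in nearly every analogous situation (Chebyshev-like polynomials included), which is precisely why the statement remains a conjecture. A reasonable intermediate target is to verify the conjecture whenever $2n+1$ is prime, where Theorem~\ref{thm:8.1} already supplies irreducibility and the restricted ramification gives more leverage for locating a suitable Frobenius element by direct computation.
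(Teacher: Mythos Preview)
The statement you are addressing is labeled a \emph{conjecture} in the paper, and the paper offers no proof whatsoever---only the remark that computations support it. So there is no ``paper's own proof'' to compare against, and what you have written is not a proof either: it is a three-stage programme whose final stage you yourself flag as open. As a roadmap your outline is sound and standard. Stage~1 (irreducibility of $Q_n$ for all $n$) is exactly Conjecture~\ref{conj:9.2} of the paper, which is also open; your Newton-polygon suggestion is plausible but you give no indication of which prime and which slope pattern would actually work when $2n+1$ is composite. Stage~2 is routine. Stage~3 is the heart of the matter and is where the genuine gap lies: you correctly identify that one needs to exhibit, for each $n$, primes $\ell$ modulo which $Q_n$ has prescribed factorization types, and you correctly observe that producing such primes uniformly in $n$ is the obstruction. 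Nothing in the paper---not the resultant formula of Theorem~\ref{thm:6.1}, not the discriminant formula of Theorem~\ref{thm:5.1}---gives any handle on mod-$\ell$ factorization patterns for generic $\ell$, so your appeal to those results is aspirational rather than substantive.

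In short: your proposal is an honest and well-organized sketch of how one would \emph{try} to prove the conjecture, and it accurately isolates the difficulties, but it does not advance beyond what the paper already implicitly concedes by calling the statement a conjecture. The missing idea is precisely a mechanism---specific to this family---for controlling Frobenius cycle types, and neither you nor the paper supplies one.
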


{\bf 2.} In Section~8 we proved some partial irreducibility results. However,
computations indicate that much fore is true.

\begin{conjecture}\label{conj:9.2}
For all integers $n\geq 1$ and $0\leq k\leq n-1$ the polynomials
$Q_n^{(k)}(x)$ and $V_n(x)$ are irreducible over $\mathbb Q$.
\end{conjecture}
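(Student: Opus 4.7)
The plan is to attack both parts of Conjecture~\ref{conj:9.2} by a full Newton-polygon analysis that generalises the two $p$-Eisenstein arguments of Section~8. The coefficient formula $c_i = (-1)^{i+k}(n+i+k)!/(n!\, i!)$ for $Q_n^{(k)}(x)$ (obtained by $k$-fold differentiation of \eqref{2.3}) is ideally suited to applying Kummer's theorem, as is the formula $\binom{2n+1}{n+j+2}\binom{n+j}{n}\cdot 2/(n+j)$ for the coefficients of $V_n(x+1)$ derived in the proof of Theorem~\ref{thm:8.2}. Working prime by prime, I would first classify the possible Newton polygons of $Q_n^{(k)}(x)$, of the shifted polynomial $Q_n^{(k)}(-x-1)$, and of $V_n(x+1)$ at every prime $p$.

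The key observation is that for a prime $p$ in the range $n+k < p \leq 2n$, Kummer's theorem gives $v_p(c_i)=0$ for $i < p-n-k$ and $v_p(c_i)=1$ for $i \geq p-n-k$. This two-slope polygon shows that $Q_n^{(k)}$ has, over $\mathbb{Q}_p$, an irreducible factor of degree exactly $2n-p+1$ (the horizontal length of the nonzero slope, whose denominator $2n-p+1$ in lowest terms forces irreducibility of that slope piece). Consequently any factorisation $Q_n^{(k)}=gh$ over $\mathbb{Q}$ must have $\max(\deg g,\deg h)\geq 2n-p+1$. The special choice $p=n+k+1$ makes this lower bound equal to $n-k=\deg Q_n^{(k)}$, recovering the first half of Theorem~\ref{thm:8.1}; the choice $p=2n+1$ applied to the shift recovers the second half.

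The second step is to combine information from several primes simultaneously to eliminate every possible non-trivial factor degree. If primes $p_1, p_2$ can be found with $2n-p_1+1$ and $2n-p_2+1$ coprime (and both exceeding half the degree), then a rational factorisation of $Q_n^{(k)}$ is impossible. When this fails, one uses the Newton polygon of $Q_n^{(k)}(-x-1)$ at a prime near $2n+1$ as a third source of constraints. The same machinery applies to $V_n(x+1)$ via its three-binomial coefficient formula, where the factor $\binom{2n+1}{n+j+2}$ gives a wealth of primes whose $p$-adic valuations are easy to track by Kummer.

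The main obstacle is purely number-theoretic: guaranteeing the existence of the required primes for every $n$. Bertrand's postulate is too crude; sharper results on primes in short intervals (Nagura's theorem, used already in Remark~\ref{rem:5.5}, or Baker--Harman--Pintz) should suffice for large $n$, but a secondary difficulty is that these analytic bounds leave finitely many small cases that must be handled by direct computation. A further delicate point is that a single Newton polygon only furnishes a \emph{lower} bound on the largest factor; to upgrade this into full irreducibility one may additionally need growth estimates on the coefficients, in the spirit of Grosswald's treatment of the Bessel polynomials, to preclude a ``small factor plus almost-everything" decomposition. Establishing that these auxiliary estimates are compatible with the arithmetic of $(n+i+k)!/(n!\,i!)$ is where I expect the real work to lie.
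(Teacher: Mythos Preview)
The statement you are addressing is labelled a \emph{conjecture} in the paper, and the paper offers no proof of it; the partial results in Section~8 (Theorems~\ref{thm:8.1} and~\ref{thm:8.2}) are as far as the authors go. So there is no proof in the paper to compare your proposal against, and your write-up is, to its credit, explicit that it is a research plan rather than a proof. The Newton-polygon framework you propose is indeed the standard weapon for problems of this type---Schur, Coleman, Filaseta, and collaborators have used exactly this circle of ideas for generalised Laguerre, Bessel, and Hermite polynomials, whose coefficient structure is close to that of $(n+i+k)!/(n!\,i!)$---so the overall direction is well chosen.

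That said, one concrete step of your outline is not correct as stated. You assert that finding two primes $p_1,p_2$ with $d_i:=2n-p_i+1$ coprime and each $d_i>(n-k)/2$ already rules out a rational factorisation. It does not. Over $\mathbb{Q}_{p_i}$ the Newton polygon gives an irreducible factor of degree $d_i$, so any rational factor $g$ has $\deg g\in[0,\,(n-k)-d_i]\cup[d_i,\,n-k]$; intersecting the two constraints for $i=1,2$ still leaves the window $[1,\,(n-k)-\max(d_1,d_2)]$ of admissible small-factor degrees untouched, and coprimality of $d_1,d_2$ plays no role here. What a Filaseta-style argument actually requires is, for \emph{every} $d$ with $1\le d\le\lfloor(n-k)/2\rfloor$, a prime $p$ whose Newton polygon (at $x=0$ or after the shift $x\mapsto -x-1$) admits no sub-factor of horizontal length $d$; this typically demands one carefully chosen prime per excluded degree and is precisely the prime-existence obstacle you flag at the end. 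In other words, the difficulty you place last is not a technical afterthought but the entire content of the problem: until it is resolved, the plan yields nothing beyond the cases already covered by Theorem~\ref{thm:8.1}.
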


{\bf 3.} It is clear that Proposition \ref{prop:4.2} is in fact true in greater generality. More
precisely, if $R$ is a commutative ring with $0\neq 1$ and $p_{0}, q_{0}, Y, Z\in R$ satisfy the identity $p_{0}Y+q_{0}Z=1$, then there
are $p_{n}, q_{n}\in R$ such that $p_{n}Y^{n+1}+q_{n}Z^{n+1}=1$. Indeed, the
proof of this more general statement is exactly the same as that of
Proposition~\ref{prop:4.2}.

This gives rise to the question of whether such a result is also true in a
non-commutative setting. We don't know the answer and formulate the following
open

\begin{ques}
Let $R$ be a non-commutative ring and suppose that $p_{0}, q_{0}, Y, Z$ satisfy
the equation $p_{0}Y+q_{0}Z=1$, where $Y, Z$ are not nilpotent elements. Given a fixed positive integer $n$, do there
exist $p_{n}, q_{n}\in R$ such that $p_{n}Y^{n+1}+q_{n}Z^{n+1}=1$?
\end{ques}

If $Y, Z$ are allowed to be nilpotent elements in $R$, it is easy to show that in general we cannot expect a positive answer. Indeed, let $R=M_{2,2}$ be the ring of $2\times 2$ matrices with integer coefficients and consider the identity $p_{0}Y+q_{0}Z=I$, where
$$
p_{0}=\left(\begin{array}{cc}
                     0 & 0 \\
                     1 & 0 \\
                   \end{array}
                 \right),\;Y=\left(
                   \begin{array}{cc}
                     0 & 1 \\
                     0 & 0 \\
                   \end{array}
                 \right),\; q_{0}=\left(\begin{array}{cc}
                     0 & 1 \\
                     0 & 0 \\
                   \end{array}
                 \right), \;Z=\left(
                   \begin{array}{cc}
                     0 & 0 \\
                     1 & 0 \\
                   \end{array}
                 \right).
$$
However, we have $Y^2=Z^2=0$ and thus, for each $n\geq 1$ and any $p, q\in R$ we have $pY^{n+1}+qZ^{n+1} \neq I$.

\end{document}